\numberwithin{equation}{section}
\numberwithin{figure}{section}
\newtheorem{theorem}{Theorem}[section]
\newtheorem{corollary}[theorem]{Corollary}
\newtheorem{lemma}[theorem]{Lemma}
\newtheorem{proposition}[theorem]{Proposition}
\newtheorem{definition}[theorem]{Definition}
\let\olddefinition\definition
\renewcommand{\definition}{\olddefinition\normalfont}
\newtheorem{remark}[theorem]{Remark}
\newtheorem{example}[theorem]{Example}
\newcommand{\refpart}[1]{{\it (#1)}} 
\renewcommand\ge\geqslant
\renewcommand\geq\geqslant
\renewcommand\le\leqslant
\renewcommand\leq\leqslant
\newcommand{\RR}{{\ensuremath{\mathbb{R}}}}
\newcommand{\ma}{{\ensuremath{\beta}}} 
\newcommand{\mb}{{\ensuremath{\gamma}}}
\newcommand{\cS}{{\ensuremath{\mathcal{S}}}}
\newcommand{\cM}{{\ensuremath{\mathcal{M}}}}
\newcommand{\cQ}{{\ensuremath{\mathcal{Q}}}}
\newcommand{\cR}{{\ensuremath{\mathcal{R}}}}
\newcommand{\cE}{{\ensuremath{\mathcal{E}}}} 
\newcommand{\cH}{{\ensuremath{\mathcal{H}}}}
\newcommand{\cZ}{{\ensuremath{\mathcal{Z}}}} 
\newcommand{\cB}{{\ensuremath{\mathcal{B}}}}
\newcommand{\cP}{{\ensuremath{\mathcal{P}}}} 
\newcommand{\img}{\mathrm{im\ }}
\newcommand{\sgma}{\Omega}
\def\vb{\mathbf{v}}
\def\crossing{e} %\mathfrak{w}}
\newcommand{\hh}{{\hspace{7pt}}}
\newcommand{\hg}{{\hspace{6pt}}}
\newcommand{\fc}[1]{\mbox{\em #1}}
\newcommand{\tdeg}[1]{\mbox{tdeg} \;#1}
\newcommand{\ii}{K}
\title{Building geometrically continuous splines}
\author{Raimundas Vidunas\footnote{Department of Mathematical Informatics, University of Tokyo, Bunkyo-ku 113-8656, Japan. E-mail: {\sf rvidunas@gmail.com}}} 
\date{}
\begin{document}

\maketitle

\begin{abstract}
With the renewed and growing interest in geometric continuity in mind,
this article gives a general definition of geometrically continuous polygonal surfaces and 
geometrically continuous spline functions on them. 
Polynomial splines defined by $G^1$ gluing data in terms of rational functions
are analyzed further. A general structure for a spline basis is defined,
and a dimension formula is proved for spline spaces of bounded degree 
on polygonal surfaces made up of rectangles and triangles.
Lastly, a comprehensive example is presented, 
and practical perspectives of geometric continuity are discussed. 
The whole objective of the paper is to put forward a modernized, practicable framework 
% set a modernized standard of understanding and usage 
of modeling with geometric continuity.\\

\noindent{\bf Keywords.} Geometric continuity, Polynomial splines, Dimension.
\end{abstract}

\section{Introduction}

There is growing interest in geometric continuity,
especially in Computer Aided Geometric Design \cite{flex}, \cite{beccari}, \cite{Bercovier}
and isogeometric analysis \cite{Kapl2014}, \cite{Peters14}. 
Nevertheless, apprehension of its technical details, applications,
potential %of geometric continuity 
appears to be variable and inconsistent
among active researchers, perceivably impeding communication and 
faster development of the subject. Substantially updating, improving 
the presentation in author's PhD thesis \cite[\S 6]{raimundas}, % and \cite{VidunasAMS},
this article aims to facilitate a more uniform understanding of practical requisites % essentials
of geometric continuity.

% Although still not used extensively, 
Geometric continuity \cite{Boehm87}, \cite{derose}, %\cite{Gregory89} 
\cite{PetersHandbook}  is a % natural 
general technique to produce visually smooth surfaces in Computer Aided Geometric Design (CAGD).
A resulting % designed 
surface is typically made of parametric patches in $\RR^3$,
with each patch defined by a (usually) polynomial map from a polygon in  $\RR^2$,
in such a way that the patches fit each other continuously along the edges.
{\em Geometric continuity} requires continuousness of tangent planes 
(and possibly, of curvature, higher osculating spaces) along the glued edges and around the formed vertices. 
The special case of {\em parametric continuity} occurs % generically 
when the parametrizing polygons are situated next to each other in $\RR^2$,
forming a polygonal mesh. A whole surface in $\RR^3$ is then parametrized
by a single map from the polygonal mesh, and the single map is required to be
$C^r$ continuous (with chosen $r\ge 1$ differential continuity).
Parametric continuity is not adequate for modeling surfaces
of arbitrary topology \cite{LoopDerose}.

Direct geometric continuity conditions in $\RR^3$ lead to complex % computations
models that are not simple to use and modify \cite{DeRose90}, \cite{Peters96i}, % \cite{Shi:2004}, 
\cite{Che05}. % \cite{Zheng95}.
The other approach \cite{derose}, \cite{Hahn87}
is to define a geometric data structure of glueing a set of polygons
abstractly, in manifold theoretic terms. 
A concrete surface in $\RR^3$ is then viewed as a {\em realization} 
of an abstract % defined 
polygonal manifold $\cM$ by a map $\cM\to\RR^3$
defined by a triple $(f_1,f_2,f_3)$ of suitably 
defined {\em geometrically continuous $G^r$ functions} on $\cM$. 
The actual goal is to define the spaces of $G^r$ functions,
sufficiently rich so that they could % be able to 
generate satisfactorily smooth surfaces in $\RR^3$.
Of particular interest are $G^r$ {\em polynomial splines}, 
that is, $G^r$ functions that restrict to polynomial functions on each polygon.
 
Geometric continuity of abstract polygonal surfaces is customarily defined
with reference to {\em differential manifolds} \cite{Wiki}  % \cite{Warner83}, 
so that the $G^r$ functions on the polygonal surfaces would correspond
to the $C^r$ functions on a differential surface.
The gluing data is then the transition maps between open neighborhoods 
of the glued polygonal edges \cite{derose}, or the jet germs of the transition maps \cite{PetersHandbook},
or the induced transformations of tangent spaces \cite{Hahn87}. 
In terms of direct $G^r$ gluing in $\RR^3$, this corresponds to fixing
the {\em shape parameters} \cite{piegl87} % \cite{derose} 
and working with induced linear relations 
for the coefficients of the component functions $(f_1,f_2,f_3)$.

The geometric continuity conditions and theoretical grounding in differential geometry
are well understood in principle \cite{Gregory89}, \cite{PetersHandbook}, %\cite{Hahn87},  
\cite{Zheng95}.
Sufficiency of the known constrains for smooth realizations has not been proved yet.
In particular, Peters and Fan \cite{Peters2010} noticed a topological balancing restriction 
around {\em crossing vertices} (see Definition \ref{def:crossing} and \S \ref{sec:crossing} here)
in a context of $G^1$ gluing of rectangles. The first new result %presented contribution 
of this paper is a generalization of this restriction with Theorem \ref{cond:comp2}. 
The restriction depends on the assumption of polynomial (or more generally, $C^2$)
specializations of the $G^1$ functions on the polygons.  
%  turns out to be local after all

Other contributions of this paper are: 
a general definition of $G^1$ polygonal surfaces and $G^1$ functions on them (in \S \ref{sec:gsplines}),
a dimension formula for certain spaces of $G^1$ polynomial splines of bounded degree (in \S \ref{eq:spdim}),
and an extensive, illuminating example in \S \ref{sec:poctah}.
A general strategy of building a basis $G^1$ splines of bounded degree is presented in \S \ref{sec:generate},
and demonstrated on $G^1$ polygonal surfaces made up of rectangles and triangles
(in \S \ref{sec:boxdelta}, \S \ref{sec:poctah}). % Lastly, 
In addition, \S \ref{sec:practical} 
discusses sufficiency of defined spline spaces for smooth generalization in $\RR^3$.
% and offers % a few 
% perspectives of geometric continuity applications.

The purpose of the whole paper is to encourage a uniform level of % understanding 
familiarity and communication in the accelerating field of geometric continuity.
There is no shortage of general definitions \cite{GuHeQin06}, \cite{Tosun:2011}, 
frameworks \cite{beccari}, \cite{Vecchia:2008}, 
special constructions \cite{Prautzsch:1997}, \cite{Reif:1995} % \cite{Johnson10}
deploying geometric continuity. But it becomes harder to distinguish their relative merits,
level of generality, addressed issues, triviality or prominence of technical details.
There is a lack of mutual comparisons and extended examples in the literature.
Recent communication on the subject revealed odd patterns of superficial understanding
of basic routines, incongruent focus on purportedly important questions or practicality.
My updated experience of writing PhD thesis \cite{raimundas} and \cite{VidunasAMS}
is worth sharing, apparently.
% \todo{Setting a standard}

We start in \S \ref{sec:geoglue} with a review of $G^r$ continuity conditions,
updated with the restriction on the crossing vertices generalizing \cite{Peters2010}.
This serves as a preparation for our general definitions of $G^1$ polygonal surfaces %complexes 
and splines in \S \ref{sec:gsplines}. 
The focus is not on the most comfortable theory building, but on 
a transparent overview of technical details and on
% assembling a practical level of understanding of 
grasping a full potential of geometric continuity.
One underlying suggestion is that grounding in differential geometry should not be taken too seriously.
Ultimately, a designer does not need to be deeply aware of conformity with differential geometry.
The key objective for expansive applications is defining a spline space with preferably minimal gluing data,
and estimating the quality of realizations in $\RR^3$ by those splines.

Section \ref{sec:freedom} presents essential technical details for understanding
a basis structure of spline spaces and computing their dimension.
Section \ref {sec:boxdelta} proves general formulas for the dimension of spline spaces
on $G^1$ polygonal surfaces made up exclusively of rectangles and triangles.
Section \ref{sec:poctah} gives an extensive, illustrative  
example of a $G^1$ polygonal surface, and demonstrates computation of spline bases on it.
Finally, Section \ref{sec:practical} summarizes the results and offers a few practical perspectives  
of using geometric continuity.\\

\noindent
{\bf Acknowledgment. } This article originated as an alternative version to the work \cite{MVV15}
with Bernard Mourrain and Nelly Villamizar. %Without this 
The author appreciates rich discussions, shared literature and revived interest 
in geometric continuity, provoking this work in full.

\section{Geometrically continuous gluing}
\label{sec:geoglue}

This section is devoted to defining the $G^r$ gluing data and restrictions on it, 
with more attention to the $r=1$ case. 
This is a preparation for defining the {\em $G^1$ polygonal surfaces}
and {\em $G^1$ spline functions} on them in \S \ref{sec:gsplines}.
% (aka  {\em $G^1$ spline domain}, {\em $G^1$ surface complex}) and {\em $G^1$ splines}.
The $G^1$ gluing data is generally clear from differential geometry and CAGD practice, 
though a comprehensive set of restrictions to ensure possibility of $G^r$ gluing around vertices 
% was not recognized until recently \cite{Peters2010} and 
was still not formalized.
We spell out different representations of geometric continuity conditions and full terminology, 
preparing for inclusive definitions of \S \ref{sec:gsplines}. 
% Our attention to the $C^r$ limited by the application drive

Building geometric continuous surfaces by defining at first % focusing
some minimal gluing data  (or an abstract surface, polygonal complex) %
to patch a collection of polygons has a definite motivation from differential geometry. 
We recall a standard definition \cite{Warner83} of differential surfaces.
\begin{definition}
Let $r$ denote a positive integer, and let $\ii$ denote a finite set.
A $C^r$ {\em differential surface}  is defined
as a connected topological Hausdorff \cite{Wiki} manifold $\cS$ with % an open covering, that is, 
a collection $\{(V_k,\psi_k)\}_{k\in\ii}$ such that
\begin{enumerate}
\item $\{V_k\}_{k\in\ii}$ is an open covering of $\cS$.
\item Each $\psi_k$ is a homeomorphism $\psi_k \!:\! U_k\!\to V_k$, 
where $U_k$ is an open set in $\RR^2$.
\item For distinct $k,\ell\in\ii$ such that $V_{k,\ell}:=V_k\cap V_{\ell}$ %\neq \emptyset$,
is not an empty set, let $U_{k,\ell}:=\psi_k^{-1}(V_{k,\ell})$,
$U_{\ell,k}:=\psi_\ell^{-1}(V_{k,\ell})$.
Then the map \mbox{$\psi_\ell^{-1}\circ\psi_k:U_{k,\ell}\to U_{\ell,k}$} 
is required to be a $C^r$-diffeomorphism. 
\end{enumerate}
The collection $\{(V_k,\psi_k)\}_{k\in I}$ is a $C^r$ {\em atlas} on $\cS$,
and the maps $\psi_\ell^{-1}\circ\psi_k$ are called {\em transition maps}.
A function $f:\cS\to\RR$ is {\em $C^r$ continuous} %$f$ 
on $\cal S$  if for any $k\in\ii$ 
the function $f \circ \psi_{k}^{-1}: U_{k} \to \RR$ is $C^r$ continuous.
\end{definition}
In contrast to differential geometry, CAGD aims to build surfaces from {\em closed polygons}
rather than from open sets. It is easy to underestimate the technical difficulty of properly
defining the gluing data, $G^r$ functions and relating them to differential manifolds, $C^r$ functions. 
% \todo{A smooth definition without unpredictable side tails might be expected...}

Given a collection of polygons and homeomorphisms between their edges,
% we can easily define a % $C^0$ 
a topological surface and continuous functions on it are defined easily.
\begin{definition} \label{def:g0complex}
Let $\ii_0,\ii_1$ denote finite sets.
A {\em $G^0$ polygonal surface} $\cal M$ is a pair 
$(\{\sgma_k\}_{k\in \ii_0},\{\lambda_k\}_{k\in \ii_1})$ such that
\begin{enumerate}
\item $\{\sgma_k\}_{k\in \ii_0}$ is a collection of (possibly coinciding) convex 
closed polygons in $\RR^2$. 
\item $\{\lambda_k\}_{k\in \ii_1}$ is a collection of homeomorphism 
$\lambda_k:\tau_{k}\to{\tau}'_{k}$ between pairs of polygonal edges 
$\tau_{k}\subset\sgma_{\ell_k}$, ${\tau}'_{k}\subset{\sgma}_{\ell'_k}$.
\item Each polygonal edge can be paired with at most one other edge, 
and it cannot be paired with itself. 
\item The equivalence relation on the polygons generated
by the incidences $\lambda_k$ of glued edges has exactly one orbit. 
% This means that the topological surface is connected.
% \item % Any two polygons $\sgma'$, $\sgma''$ are related by 
% The topological surface defined as the disjoint union of the polygons $\sgma_k$
% with points on the edges identified to equivalence classes by the homeomorphisms $\lambda_k$
%is {\em connected}. Explicitly, this means that the equivalence relation on the polygons generated
% by  incidence of glued edges has exactly one orbit. 
\end{enumerate}
A $G^0$ polygonal surface $\cM$ is called {\em linear} if all $\lambda_k$ 
are affine-linear homeomorphisms. 
\end{definition}
\begin{definition}
Any $G^0$ polygonal surface $\cM$ has a structure of a % (Hausdorff \cite{Wiki}) 
topological surface, as the disjoint union of the polygons with some points identified 
to equivalence classes by the homeomorphisms $\lambda_k$.
Condition \refpart{iv} means that this topological surface is connected.
The identifications of polygonal edges and vertices give a combinatorial complex of 
{\em polygons}, {\em edges} and {\em vertices} (as {\em faces} of the complex \cite{Ziegler00}) 
on the topological surface.
A common edge on $\cM$ defined by $\lambda_k$ in \refpart{ii}
will be denoted by $\tau_{k}\sim {\tau}'_{k}$. These edges are called an {\em interior edges} of $\cM$.
The other edges are {\em boundary edges}. 
% The {\em boundary} of $\cM$ consists of the points on the boundary edges.
We use the set notation $\{P_1,\ldots,P_n\}$
to denote an equivalence class of identified polygonal vertices. 
A vertex is a {\em boundary vertex} is on a boundary edge, and it is an {\em interior vertex} otherwise.
\end{definition}
\begin{definition}
A {\em  continuous function} on a $G^0$ polygonal surface $\cM$ 
is defined by assigning a continuous
function $f_k$ to each polygon $\sgma_k$ ($k\in N_0$) 
so that their restrictions to the polygonal
edges are compatible with the homeomorphisms $\lambda_k$ ($k\in N_1$).
The set of continuous functions on $\cM$ is denoted by $G^0(\cM)$.
\end{definition}
Our intention is to define sets $G^r(\cM)$ of $G^r$ {\em geometrically continuos functions}
on a polygonal surface $\cM$, such that a generic triple $(f_1,f_2,f_3)$
of functions from the same $G^r(\cM)$ would give a map $\cM\to\RR^3$ whose image is
a $G^r$ continuous surface in $\RR^3$ as defined in CAGD \cite{Hahn87}, \cite{Peters96i}.  
In particular, the $G^1$ continuous surfaces have {\em tangent plane continuity}
of polygonal patches along the glued edges. 
We will refer to a map $\cM\to\RR^3$ with a $G^r$ continuous image as 
a {\em $G^r$ smooth realization} of $\cM$.
The described topological surface $\cM$ should remain the underlying topological space.
To keep technical details simpler, we consider only linear polygonal surfaces
and throughly analyze only the $r=1$ case.  
We will use the following technical definitions.
\begin{definition} \label{def:c1cl}
 By a {\em $C^r$ function} on a closed polygon $\sgma\subset\RR^2$
 we mean a continuous function on $\sgma$ that can be extended to a $C^r$
 on an open set containing $\sgma$. We denote the space of these functions by $C^r(\sgma)$.
We could require weaker condition of being $C^r$ functions on the interior of 
$\sgma$ and continuous functions on $\sgma$. However,
in CAGD applications one typically allows the polygonal restrictions $g_1,g_2$
to be in a specific class of $C^\infty$ functions (on open neighborhoods 
of $\sgma_1$ or $\sgma_2$), such as polynomial, rational or trigonometric functions.
\end{definition}
\begin{definition} \label{def:stcoor}
Let $\sgma$ denote a convex closed polygon in $\RR^2$,
and let $\tau$ denote an edge of $\sgma$.
By a {\em coordinate system attached to $\tau$} we mean
a pair $(u,v)$ of % independent 
linear functions on $\RR^2$ such that:
\begin{itemize}
\item $u$ attains the values $0$ and $1$ at the endpoints of $\tau$;
\item $v=0$ on the edge $\tau$, and $v>0$ on the interior of $\sgma$.
\end{itemize}
If additionally $(v,u)$ is a coordinate system attached to the other edge
of $\sgma$ at the endpoint $u=0$, the pair $(u,v)$ is called
a {\em standard coordinate system} attached to $\tau$ or to the end vertex $u=0$.
There are exactly two standard coordinate systems attached to $\tau$.
\end{definition}
\begin{definition} \label{def:inout}
Let $\sgma$ denote a convex closed polygon in $\RR^2$.
Let $\tau$ denote an edge of $\sgma$, 
and let $L\subset \RR^2$ denote the line containing $\tau$.
The {\em inward side (from $\tau$, towards $\Omega$)}
is the open half-plane from $L$ that contains the interior of $\Omega$.
The {\em outward side (from $\tau$, away from $\Omega$)}
is the other open half-plane from $L$.
\end{definition}

\subsection{Gluing two polygons}
\label{sec:edgeglue}

There are two basic problems in constructing geometrically continuous surfaces:
gluing of two patches along an edge,  and gluing several patches around a vertex. 
The former % routine of gluing two polygons along a single edge 
is a relatively straightforward routine \cite{derose}, \cite{Zheng95}.
The analogy with differential geometry can be followed closely using transition maps.
Simple pictures (from \cite{Hahn87}, \cite{Boehm87}, for example) can be useful
in supplementing this discussion.
% \todo{The vertices distinct?}

% Let $r$ denote a positive integer, and 
Let $\sgma_1,\sgma_2$ denote two convex closed polygons in $\RR^2$, and let
$\sgma^0_{1}\subset\sgma_{1}$, $\sgma^0_{2}\subset{\sgma}_{2}$ denote their interiors.
Let $\lambda:\tau_{1}\to{\tau}_{2}$ denote a linear homeomorphism % $C^r$ diffeomorphism
between their edges $\tau_{1}\subset\sgma_{1}$, ${\tau}_{2}\subset{\sgma}_{2}$,
and let $\cM_0$ denote the $G^0$ polygonal surface $(\{\Omega_1,\Omega_2\},\{\lambda\})$. % of Definition \ref{def:g0complex}.
Let $U_1\supset \tau_1$, $U_2\supset \tau_2$ denote open neighborhoods of the two edges.
Suppose we have a $C^r$ diffeomorphism $\psi_0:U_1\to U_2$ such that 
\begin{itemize}
\item[\refpart{A1}] It specializes to $\lambda:\tau_{1}\to{\tau}_{2}$ 
when restricted to the edge $\tau_1$;
\item[\refpart{A2}] It maps the interior $\sgma^0_{1}\cup U_1$ to 
the exterior $U_2\setminus \sgma_{2}$ of the polygon $\sgma_2$.
\end{itemize}
We have a $C^r$ differential surface $\cM_r$ by considering the disjoint union of 
$\sgma^0_{1}\cup U_1$ and $\sgma^0_{2}\cup U_2$ modulo the equivalence relation
defined by the transition map $\psi_0$. The disjoint union of 
$\sgma^0_{1}$ and $\sgma^0_{2}$ is a proper subset of $\cM_r$.
The convexity assumption makes condition \refpart{A2} possible.
% \todo{Simplify, define interior. Endvertices there?}
% As topological manifolds, the interior of $\cM_0$ is a proper subset of $\cM_r$.

A $C^r$ function on $\cM_r$ is given by a pair $(f_1,f_2)$ of functions
$f_1\in C^r(\sgma_{1})$, %^0\cup U_1)$}, 
$f_2\in C^r(\sgma_{2})$ %^0\cup U_2)$
such that 
\begin{equation} \label{eq:crcont}
f_1=f_2\circ\psi_0 \qquad \mbox{on} \quad U_1.
\end{equation}
We can define the {\em geometrically continuous $G^r$ functions} on $\cM_0$
as % Let $G^r(\cM_0)$ denote the set of 
pairs $(g_1,g_2)$ of functions $g_1\in C^0(\sgma_1)$,  $g_2\in C^0(\sgma_2)$ such that 
$(g_1|_{\sgma_1},g_2|_{\sgma_2})$ is a restriction of a $C^r$ function on $\cM_r$.
We seek a more constructive %ly concise 
definition.

Let $(u_1,v_1)$, $(u_2,v_2)$ denote coordinate systems attached to $\tau_1$, $\tau_2$,
respectively. We assume that $\lambda$ identifies the endpoints $u_1=0$ and $u_2=0$;
then it identifies the endpoints $u_1=1$ and $u_2=1$.  
Let $\ell_1$ denote the line in $\RR^2$ containing $\tau_1$.
The transition map $\psi_0$ has Taylor expansions \cite{Wiki} in \mbox{$u_1-u_1(P)$}, 
$v_1$  of order $r$ at each point $P\in\tau_1$.
In particular, we can write the action of $\psi_0$ as
\begin{equation} \label{eq:thetat}
{u_1\choose v_1} \mapsto  {u_2\choose v_2}={u_1\choose 0}+
\sum_{k=1}^r \theta_k(u_1)v_1^k+\psi_r(u_1,v_1),
\end{equation}
where each $\theta_k(u_1)$ is a $C^{r-k}$ map to $\RR^2$ from the open 
subset $\ell_1\cap U_1$ of $\ell_1$, and $\psi_r(u_1,v_1)$ % \in C^r(U_1)$ % 
is a $C^r$ map $U_1\to \RR^2$ that vanishes at each point of $\tau_1$ % on $\tau_1$ 
together with all derivatives of order $\le r$. % at each point of $\tau_1$.
If $(f_1,f_2)$ is a $C^r$ function on $\cM_r$, then $f_1$ has Taylor expansions 
in \mbox{$u_1-u_1(P)$}, $v_1$ of order $r$ at each point $P\in\tau_1$, and similarly,
$f_2$ has Taylor expansions in \mbox{$u_2-u_2(Q)$}, $v_2$ of order $r$ 
at each point $Q\in\tau_2$. Condition (\ref{eq:crcont}) gives
invertible linear transformations of the Taylor coefficients of $f_1$ at each $P\in\tau_1$ 
to the Taylor coefficients of $f_2$ at $Q=\lambda(P)$. 
The linear transformations are determined % by (\ref{eq:thetat}) that is, 
by the $r$-th order Taylor expansion of $\psi_0$ in  (\ref{eq:thetat}) at each $P\in\tau_1$.

The order $r$ Taylor expansions % of order $r$ 
at a single point can be interpreted as
{\em $r$-th order jets} \cite{Wiki} of $\psi_0$, $f_1$ or $f_2$. The jets are equivalence classes
of $C^r$ functions or maps, with the equivalence defined as having the same Taylor expansion
of order $r$. The jets can be represented by polynomials (or polynomial vectors)
in formal variables $\tilde{u},\tilde{v}$  of degree $\le r$. 
The linear spaces of jets are denoted by $J^r_P(U_1,\RR^2)$ 
for the $C^r$ functions $U_1\to\RR^2$ at $P\in U_1$, etc.
Multiplication and composition of jets is defined by corresponding
polynomial operations modulo the monomials in $\tilde{u},\tilde{v}$ of degree $r+1$.
The relevant jets are denoted by $J^r_P(\psi_0)$, $J^r_P(f_1)$, $J^r_Q(f_2)$ for
$P\in\tau_1$, $Q\in\tau_2$. Morevoer, the jet $J^r_P(g)$ can be defined for a function 
$g\in C^0(\sgma_1)$ at a boundary point $P\in\sgma_1$ if $g$ is extendible to
a $C^r$ function in a neighborhood of $P$, because all Taylor coefficients are determined
by the behavior of $g$ on $\sgma_1$.
Now we can give a general definition of $G^r$ functions,
more computable than (\ref{eq:crcont}). % usable in CAGD.
\begin{definition} \label{eq:grfdef}
Suppose we have functions $g_1\in C^1(\sgma_1)$,  $g_2\in C^1(\sgma_2)$. 
%% such that $f_1|_{\sgma^0_1}$, $f_2|_{\sgma^0_2}$ are extendible to $C^r$ functions
% extendible to $C^r$ functions % $f_1^*$, $f_2^*$
% on some open neighborhoods of $\sgma_1$, $\sgma_2$, respectively. 
The pair $(g_1,g_2)$ % of functions $f_1\in C^0(\sgma_1)$,  $f_2\in C^0(\sgma_2)$ 
is called a {\em geometrically continuous $G^r$ function} on $\cM_0$ if
$g_1|_{\tau_1}=g_2\circ \lambda$ and %for all points $P\in\tau_1$ and
\begin{equation} \label{eq:grjet}
J^r_P(g_1)=J^r_Q(g_2)\circ J^r_P(\psi_0)
\end{equation}
for all points $P\in\tau_1$ and $Q=\lambda(P)$.
We denote the linear space of $G^r$ functions by $G^r(\cM_0,\psi_0)$.
The space is independent from the term $\psi_r$ in (\ref{eq:thetat}).
\end{definition}
To define the $G^r$ functions, % geometric continuity, 
it is thus enough to specify continuously varying $J^r_P(\psi_0)$
for each $P\in\tau_1$, % rather than $\psi_0$, 
following (\ref{eq:thetat}) and the continuity restrictions on all $\theta_k(u_1)$ 
but ignoring $\psi_r(u_1,v_1)$. The technical term for this continuously 
varying family of jets is the {\em jet bundle} $J^r_{\tau_1}(\psi_0)$.
Since CAGD typically uses $C^\infty$ restrictions $g_1,g_2$ 
(as remarked in Definition \ref{def:c1cl}), 
it is natural to define the jet bundle $J^r_{\tau_1}(\psi_0)$ by equation (\ref{eq:thetat}) 
% without the last term, and 
with all $\theta_k(u_1)$ in the same  or a similar class of $C^\infty$ functions. 
For example, if we want to define {\em polynomial $G^r$ splines},
we may choose all $\theta_k(u_1)$ to be continuous rational functions of $u_1$.
We can always take $\psi_r(u_1,v_1)=0$.
Condition \refpart{A2} leads to additional constrains on $J^r_P(\psi_0)$;
see Proposition \ref{def:g1vf} soon. % for illustration.

Dualization leads to a coordinate-independent (and at the same time, 
more computable) formulation of the geometric continuity condition. 
For that, we concentrate on the linear transformation of the Taylor coefficients of $g_1,g_2$
rather than on the transformation (\ref{eq:grjet}) of whole jets.
Up to a constant multiple, a Taylor coefficient is a differentiation functional
$\partial^{j+k}/\partial u_i^j\partial v_i^k$ (of order $j+k\le r$, with $i\in\{1,2\}$). 
Applying these differentiations with $i=1$ to (\ref{eq:crcont}) 
we get expressions of $\partial^{j+k}/\partial u_1^j\partial v_1^k$ (of $f_1$)
in terms of differentiations in $u_2,v_2$ (of $f_2$) of order \mbox{$\le j+k$}.
The same transformation of differential operators of order $\le r$ is induced by (\ref{eq:grjet}).
Rather than transforming the space of functions, we seek to use the
corresponding transformation of differentiation operators.
We work out this correspondence for $r=1$.

The {\em tangent space} $T_P$ of $\RR^2$ at a point $P\in\RR^2$ can be defined \cite{Wiki} 
as the linear space of {\em derivations}, or {\em directional derivatives}. 
For a vector $\,\overrightarrow{\!AB}$, the direction derivative $\partial_{AB}$ is defined as
\begin{equation} \label{eq:dirder}
\partial_{AB} f(P) = \lim_{t\to 0} \frac{f(P+t\;\overrightarrow{\!AB})-f(P)}{t}
\end{equation}
If $A,B$ are the endpoints $u_1=0$, $u_1=1$ of $\tau_1$, then $\partial/\partial u_1=\partial_{AB}$.
We have then \mbox{$\partial/\partial v_1=\partial_{AC}$}
where $C$ is the point $u_1=0$, $v_1=1$.

With $r=1$, the $G^1$-structure defining jet bundle $J^1_{\tau_1}(\psi_0)$ can be written as
\begin{equation} \label{eq:jetb1}
{u_1\choose v_1} \mapsto  {u_2\choose v_2}={u_1+\ma(u_1)v_1\choose \mb(u_1) v_1}
\end{equation}
for some $\ma, \mb\in C^0(\tau_1)$. 
This expression can be taken as a representative transition map
in the equivalence class $J^1_{\tau_1}(\psi_0)$ of transition maps. 
If $(g_1(u_1,v_1)$, $g_2(u_2,v_2))$ is a $G^1$ function on $\cM_0$, then we have
% $\displaystyle\frac{\partial g_1}{\partial u_1}(u_1,0)=\frac{\partial g_2}{\partial u_2}(u_1,0)$
$\partial g_1/\partial u_1(u_1,0)\!=\partial g_2/\partial u_2(u_1,0)$ for all $u_1\in[0,1]$
as a consequence of \mbox{$g_1(u_1,0)=g_2(u_1,0)$}, and
\begin{equation} \label{eq:g1func0}
\frac{\partial g_1}{\partial v_1}(u_1,0) = 
\ma(u_1)\frac{\partial g_2}{\partial u_2}(u_1,0) 
+\mb(u_1)\frac{\partial g_2}{\partial v_2}(u_1,0).
\end{equation}
Defining a space of $G^1$ functions on $\cM_0$ is thereby equivalent to giving a continuous 
family $\Theta$ % $T_{\tau_1}=T_{P}|_{P\in\tau_1}$ 
of linear isomorphisms of the tangent spaces $T_P$ and $T_{\lambda(P)}$ for all $P\in\tau_1$,
defined by $\partial/\partial u_1\mapsto\partial/\partial u_2$ and
\begin{equation} \label{eq:tbiso}
\frac{\partial}{\partial v_1} \mapsto \ma(u_1)\frac{\partial}{\partial u_2} 
+\mb(u_1)\frac{\partial}{\partial v_2}
\end{equation}
with specified functions $\ma(u_1), \mb(u_1)$. %\in C^0(\cM_0)$.
Each tangent space is isomorphic to $\RR^2$ as a linear space.
The vectors are directly identified as directional derivatives by (\ref{eq:dirder}).
% At each point $P\in\tau_1$, ... notation for tangent space isomorphisms?
We say that a non-zero derivative is {\em parallel} to an edge $\tau'$ 
if its differentiation direction  is parallel $\tau'$. Otherwise the derivative
is called {\em transversal} \cite{Wiki} to $\tau'$. 

The tangent spaces form {\em trivial tangent bundles} \cite{Wiki}
$\tau_1\times\RR^2$, $\tau_2\times\RR^2$,
and $\Theta$ is an isomorphism of the tangent bundles 
(compatible with \mbox{$\lambda:\tau_1\to\tau_2$}).
The isomorphism $\Theta$ can be specified without a coordinate system,
using only derivative bases and the linear function $u_1|_{\tau_1}$.
Since the derivatives along $\tau_1\sim\tau_2$ are identified,
it is enough to identify pairs of transversal derivatives at each $P\in\tau_1$
and $Q=\lambda(P)$. This leads to the characterization of geometric continuity
in terms of {\em transversal vector fields} \cite{Wiki} 
along the edges, as in \cite[Corollary 3.3]{Hahn87}.
Condition \refpart{A2} is then easier to reflect as well 
(with reference to Definition \ref{def:inout}),
leading to the requirement $\mb(u_1)<0$ on $\tau_1$.
A thorough discussion of condition \refpart{A2} and 
topological degenerations to avoid in $G^1$ gluing 
is presented in \S \ref{sec:topology}.

The following statement is the special case $r=1$ of \cite[Lemma 3.2]{Hahn87}.
The vector field $D_1$ can be chosen to be constant, like $\partial/\partial v_1$ in (\ref{eq:tbiso}).
\begin{proposition} \label{def:g1vf} % \em
Let $D_1$ denote a transversal $C^0$ vector field on $\tau_1$ 
pointing to the inward side (towards $\Omega_1$), 
and let $D_2$ denote a transversal $C^0$ vector field on $\tau_2$ 
pointing  to the outward side (away from $\Omega_2$).
Then the space of functions $(g_1,g_2)$ satisfying 
\begin{equation} \label{eq:vfdat}
g_1(P)=g_2(Q), \qquad D_1g_1(P)=D_2g_2(Q)
\end{equation}
for any $P\in\tau_1$, $Q=\lambda(P)$ is the space $G^1(\cM_0,\varphi_0)$ 
for some diffeomorphism $\varphi_0$ between open neighborhoods 
of $\tau_1,\tau_2$.
\end{proposition}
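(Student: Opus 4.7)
The plan is to reduce the proposition to a direct comparison between the vector-field condition (\ref{eq:vfdat}) and the jet-bundle condition (\ref{eq:g1func0}), and then construct a transition map $\varphi_0$ realizing the resulting jet bundle. First I would fix standard coordinate systems $(u_1,v_1)$ and $(u_2,v_2)$ attached to $\tau_1$ and $\tau_2$, normalized so that $\lambda$ identifies $u_1=0$ with $u_2=0$ and $u_1=1$ with $u_2=1$; thus along the glued edge one has $u_1=u_2$. In these bases the transversal vector fields expand as
\[
D_1=a_1(u_1)\,\partial/\partial u_1+b_1(u_1)\,\partial/\partial v_1,\qquad
D_2=a_2(u_1)\,\partial/\partial u_2+b_2(u_1)\,\partial/\partial v_2,
\]
with $a_i,b_i\in C^0(\tau_i)$. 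The direction conditions ($D_1$ inward to $\Omega_1$, $D_2$ outward from $\Omega_2$) translate, via Definition \ref{def:inout}, to $b_1(u_1)>0$ and $b_2(u_1)<0$ everywhere on $\tau_1$.

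Next I would exploit the continuity along the edge. The condition $g_1(P)=g_2(Q)$ means $g_1(u_1,0)=g_2(u_1,0)$, which differentiated tangentially gives $\partial g_1/\partial u_1=\partial g_2/\partial u_2$ on $\tau_1$. Substituting this into $D_1g_1=D_2g_2$ and solving for $\partial g_1/\partial v_1$ (permissible since $b_1>0$) produces
\[
\frac{\partial g_1}{\partial v_1}(u_1,0)=\frac{a_2(u_1)-a_1(u_1)}{b_1(u_1)}\,
\frac{\partial g_2}{\partial u_2}(u_1,0)+\frac{b_2(u_1)}{b_1(u_1)}\,
\frac{\partial g_2}{\partial v_2}(u_1,0).
\]
This is exactly equation (\ref{eq:g1func0}) with
\[
\ma(u_1)=\frac{a_2(u_1)-a_1(u_1)}{b_1(u_1)},\qquad
\mb(u_1)=\frac{b_2(u_1)}{b_1(u_1)},
\]
both continuous on $\tau_1$, and with $\mb(u_1)<0$. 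Conversely, since $b_1>0$, the derivation of (\ref{eq:g1func0}) from (\ref{eq:vfdat}) is reversible, so the two defining conditions for $(g_1,g_2)$ describe exactly the same linear space of function pairs.

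It remains to exhibit a diffeomorphism $\varphi_0$ whose $1$-jet bundle along $\tau_1$ realizes these $\ma,\mb$. I would simply set
\[
\varphi_0(u_1,v_1)=\bigl(u_1+\ma(u_1)v_1,\;\mb(u_1)v_1\bigr),
\]
which is (\ref{eq:jetb1}) with the trivial choice $\psi_r=0$. Its Jacobian on $\tau_1$ has determinant $\mb(u_1)\neq 0$, so the inverse function theorem provides open neighborhoods $U_1\supset\tau_1$ and $U_2\supset\tau_2$ on which $\varphi_0$ is a $C^1$-diffeomorphism. The sign condition $\mb<0$ is precisely what makes $v_1>0$ go to $v_2<0$, i.e.\ the interior side of $\Omega_1$ maps to the outward side of $\Omega_2$, verifying \refpart{A2}. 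By Definition \ref{eq:grfdef}, the space of pairs $(g_1,g_2)$ satisfying (\ref{eq:g1func0}) is $G^1(\cM_0,\varphi_0)$, completing the identification.

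The only mildly delicate step is the last one: ensuring that the naive extension $\varphi_0$ is actually a diffeomorphism on a full open neighborhood of $\tau_1$, rather than only formally matching $1$-jets. This is settled by the Jacobian being uniformly non-degenerate on the compact edge $\tau_1$ (since $\mb$ is continuous and nowhere zero), so a uniform neighborhood on which $\varphi_0$ is bijective exists; the sign of $\mb$ then pins down the correct orientation demanded by \refpart{A2}.
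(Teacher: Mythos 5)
Your proof is correct and follows essentially the same route as the paper's: both convert the vector-field data (\ref{eq:vfdat}) into the tangent-bundle form (\ref{eq:tbiso}) with continuous $\ma,\mb$ and then define $\varphi_0$ by (\ref{eq:jetb1}). You merely make explicit the formulas for $\ma,\mb$ in terms of the components of $D_1,D_2$ and the sign check $\mb<0$, and spell out the diffeomorphism verification that the paper leaves implicit.
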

\begin{proof}
The vectors in $D_1$, $D_2$ are viewed as directional derivatives. 
The data in (\ref{eq:vfdat}) defines isomorphisms $T_{P}\to T_{Q}$
of tangent spaces, continuously varying with $P$ and compatible 
with $\lambda:\tau_1\to\tau_2$. After attaching coordinate systems 
$(u_1,v_1)$, $(u_2,v_2)$ to $\tau_1$, $\tau_2$, the tangent bundle 
isomorphism can be characterized by transformation (\ref{eq:tbiso}) 
with some continuous functions $\ma(u_1)$, $\mb(u_1)$. 
We define $\varphi_0$ by (\ref{eq:jetb1}).
\end{proof}
% For example, one can fix a transversal to $\tau_1$ derivative $D_1$
% and describe its image on each tangent space at $Q\in\tau_2$.
% Alternatively, one can relate (perhaps continuously varying) pairs $D_{P},D_{\lambda(P)}$ 
% of transversal derivatives transversal to $\tau_1$, $\tau_2$ 
\begin{example} \label{ex:joining} \rm
Suppose that the two polygons of $\cM_0$ 
are triangles \mbox{$\sgma_1=A_1B_1C_1$}, $\sgma_2=A_2B_2C_2$,
that \mbox{$\tau_1=A_1B_1$}, $\tau_2=A_2B_2$, and that $\lambda$ maps $A_1$ to $A_2$. 
We attach {\em standard} coordinate systems $(u_1,v_1)$, $(u_2,v_2)$
to $\tau_1$, $\tau_2$, such that $u_1=0$, $u_2=0$ define the edges 
$A_1C_1$, $A_2C_2$, respectively. Consider a tangent bundle isomorphism $\Theta$ with 
\begin{equation} \label{eq:exdev}
\ma(u_1)=2u_1, \qquad \mb(u_1)=-1
\end{equation}
in (\ref{eq:tbiso}). 
Since $u_1,\partial/\partial u_1$ are synonymous to $u_2,\partial/\partial u_2$
for $G^1$ functions, identification (\ref{eq:tbiso}) can be rewritten as
\begin{equation}
\left( \frac{\partial}{\partial v_1}-u_1\,\frac{\partial}{\partial u_1} \right) \mapsto  
-\left( \frac{\partial}{\partial v_2}-u_2\,\frac{\partial}{\partial u_2} \right).
\end{equation}
In other words, for any $P\in A_1B_1$ the isomorphism $T_P\to T_{\lambda(P)}$ is defined by
\begin{equation}
\partial_{PC_1} \mapsto -\,\partial_{QC_2}
\end{equation}
(together with the trivial $\partial_{A_1B_1} \mapsto \partial_{A_2B_2}$).
The vector fields $\partial_{PC_1}$, $\partial_{QC_2}$ pointing ``exactly" 
towards the third vertices $C_1$, $C_2$ are identified with the minus sign.
This is a nice characterization of a tangent bundle isomorphism. 
In particular, the two end-points of $\tau_1\sim\tau_2$ are symmetric.
If the coordinate systems $(u_1,v_1)$, $(u_2,v_2)$ are changed 
to the standard alternatives with $u_1=0$, $u_2=0$ 
on the edges $B_1C_1$, $B_2C_2$ (respectively), 
the coordinate change is
\begin{equation} \label{eq:trcoor}
(u_k,v_k)\mapsto (1-u_k-v_k,v_k) % \qquad %(u_2,v_2)\mapsto (1-u_2-v_2,v_2),
% \mbox{for} \quad i\in\{1,2\},
\end{equation}
for $k \in\{1,2\}$, and the derivative changes are
\begin{equation} \label{eq:reders}
\left( \frac{\partial}{\partial u_k},\frac{\partial}{\partial v_k} \right)
\mapsto \left( -\frac{\partial}{\partial u_k}, \,
\frac{\partial}{\partial v_k}-\frac{\partial}{\partial u_k}\right).
% \qquad \mbox{for} \quad i\in\{1,2\}.
\end{equation}
% for $k \in\{1,2\}$.
The $\Theta$-defining functions $\ma(u_1),\mb(u_1)$ % in (\ref{eq:tbiso}) 
have the same expressions % $\ma(u_1)=2u_1$, \mbox{$\mb(u_1)=-1$} 
(\ref{eq:exdev}) in the alternative coordinates and derivative bases.
\end{example}
\begin{definition} \label{def:join}
Let $P_1\in \sgma_1$ %\sim P_2\in\cM_0$ 
denote an endpoint of $\tau_1$, and let $P_2=\lambda(P_1)\in\tau_2$.
Let $\widehat{\tau}_1\subset \sgma_1$ , $\widehat{\tau}_2\subset \sgma_2$ denote
the other polygonal edges incident to $P_1$, $P_2$, respectively. 
The edge $\tau_1\sim\tau_2$ of $\cM_0$ is called a {\em joining edge} 
at the vertex $\{P_1, P_2\}$ if the tangent space isomorphism
$T_{P_1}\to T_{P_2}$ maps a derivative parallel to $\widehat{\tau}_1$
to a derivative parallel to $\widehat{\tau}_2$.
If we choose the standard coordinates $(u_1,v_1)$, $(u_2,v_2)$ 
with $u_1=0$, $u_2=0$ at $P_1,P_2$ (respectively),
the joining edge is characterized by $\ma(P_1)=0$ in (\ref{eq:tbiso}).
We call the edges $\widehat{\tau}_1$, $\widehat{\tau}_2$ {\em opposite to each other} at $\{P_1,P_2\}$.
They are forced to transverse into each other 
across the common vertex in smooth realizations of $\cM_0$.
In Example \ref{ex:joining}, the gluing edge is a joining edge at both of its endpoints.
\end{definition}

\subsection{Glueing around a vertex}
\label{sec:g1vertex}

Another basic situation to consider is $G^r$ gluing of several polygons 
$\sgma_1,\ldots,\sgma_n$ around a common vertex.
Let $P_1,\ldots,P_n$ denote their vertices, respectively, 
to be glued to the common vertex. For each polygon $\sgma_k$,
let $\tau_k,\tau'_k$ denote the two edges incident to $P_k$.
We denote $\ii=\{1,\ldots,n\}$, and 
label cyclically $P_{0}=P_n$,  $\tau'_{0}=\tau'_n$.
Assume that for each $k\in\ii$ the edge $\tau_k$ is glued to $\tau'_{k-1}$
so that the vertices $P_k$, $P_{k-1}$ are thereby identified. 
More explicitly, we assume linear homeomorphisms $\lambda_k:\tau_k\to\tau'_{k-1}$
such that $\lambda_k(P_k)=P_{k-1}$, and $C^r$ transition maps $\varphi_k$
between open neighborhoods of $\tau_k$ and $\tau'_{k-1}$
specializing to $\lambda_k$. Once local coordinates around each $P_k$ are fixed,
we may replace the transition maps $\varphi_k$ by their  jet bundles $J^r_{\tau_k}(\varphi_k)$
as a more concise required data.

Let $\cM^*_0$ denote the $G^0$ polygonal surface defined by the polygons $\Omega_k$ 
and the homeomorphisms $\lambda_k$ with $k\in\ii$.
% The $G^r$ functions on $\cM^*_0$ are defined by following Definition \ref{eq:grfdef}.
Let $\cP_0$ denote the common vertex $\{P_1,\ldots,P_n\}$.
% We say that $\cP_0$ has {\em order $n$},
% also in a context of embedding $\cM^*_0\subset\cM$ into a larger polygonal surface.
\begin{definition} \label{def:g1v}
Suppose that for each $k\in\ii$ we have a function $g_k\in C^0(\sgma_k)$  
extendible to a $C^r$ function % $f_1^*$, $f_2^*$
on an open neighborhoods of $\sgma_k$. Set $g_0=g_n$.
The sequence $(g_1,\ldots,g_n)$ is called a {\em geometrically continuous $G^r$ function} 
on $\cM^*_0$ if for each $k\in\ii$ the pair $(g_k,g_{k-1})$ is a $G^r$ function
on the polygonal surface $(\{\sgma_k,\sgma_{k-1}\},\{\lambda_k\})$ as in \S \ref{sec:edgeglue},
glued by $\varphi_k$ as well.
In particular, (\ref{eq:grjet}) becomes % $g_k|_{\tau_k}=g_{k-1}\circ \lambda_k$ 
\begin{equation} \label{eq:grvjet}
J^r_{P_k}(g_k)=J^r_{P_{k-1}}(g_{k-1})\circ J^r_{P_k}(\varphi_k).
% \qquad \mbox{for} \quad k\in N.
\end{equation}
\end{definition}

A natural restriction on the gluing data is
\begin{equation} \label{eq:grcjet}
J^r_{P_n}(\varphi_1\circ\ldots\circ\varphi_n)(\tilde{u},\tilde{v})=(\tilde{u},\tilde{v})
\end{equation}
in some (or any) local coordinates around $P_n$. 
This is a necessary condition for existence of satisfactorily many $G^r$ functions,
because the equations (\ref{eq:grvjet}) imply 
\begin{equation} \label{eq:jetcomp}
J^r_{P_n}(g_n)=J^r_{P_n}(g_n)\circ J^r_{P_n}(\varphi_1\circ\ldots\circ\varphi_n).
\end{equation}
This is a restrictive functional equation on $g_n$ unless (\ref{eq:grcjet}) is satisfied.
Remark \ref{rm:fulltg} below clarifies more explicitly.

For each $k\in \ii$, let us choose the standard coordinate system $(u_k,v_k)$ 
attached to $\tau_k$ with $u_k=0$ at $P_k$. 
Then $(v_k,u_k)$ is a standard coordinate system attached to $\widehat{\tau}_k$. 
With these coordinates, it is straightforward to rewrite  (\ref{eq:grcjet})
as a composition of jets:
\begin{equation} \label{eq:grcjett}
J^r_{P_1}(\varphi_1)\circ\ldots\circ J^r_{P_n}(\varphi_n)(u_n,v_n)=(u_n,v_n).
\end{equation}
This form has two advantages. It involves composition of computationally more definite objects,
that is, jets at single points rather than possibly transcendental transition maps. 
Secondly, it allows to define the gluing data in terms of jet bundles rather than
transition maps.

For $r=1$, each jet bundle $J^1_{\tau_k}(\varphi_k)$ with $k\in \ii$
is determined by continuous functions $\ma_k(u_k)$, $\mb_k(u_k)$,
so that it acts like in (\ref{eq:jetb1}):
\begin{equation} \label{eq:jetb2}
{u_k\choose v_k} \mapsto  {v_{k-1}\choose u_{k-1}}
={u_k+\ma_k(u_k)v_k\choose \mb_k(u_k) v_k}.
\end{equation}
Specialization to $P_k$ gives this transformation $J^1_{P_k}(\varphi_k)$
of $J^1$-jets in the standard coordinates:
\begin{equation} \label{eq:jetbp}
% {u_{k-1}\choose v_{k-1}}
\left( \begin{array}{c} 1 \\ u_{k-1} \\ v_{k-1} \end{array} \right)
=\left( \begin{array}{ccc} 1 & 0 & 0 \\
0 & 0 & \mb_k(0) \\ 0 & 1 & \ma_k(0) \end{array} \right) 
\left( \begin{array}{c} 1 \\ u_k \\ v_k \end{array} \right).
\end{equation}
Ignoring the trivial transformation of the constant terms,
condition (\ref{eq:grcjett}) becomes
\begin{equation} \label{eq:jetbp1}
\left( \begin{array}{cc} 0 & \mb_1(0) \\ 1 & \ma_1(0) \end{array} \right) \,\cdots\,
\left( \begin{array}{cc} 0 & \mb_n(0) \\ 1 & \ma_n(0) \end{array} \right) =
\left( \begin{array}{cc} 1 & 0 \\  0 & 1 \end{array} \right).
\end{equation}
Note that the matrix product is non-commutative, 
hence we do not use the $\prod$-notation. 
The dual transformations of partial derivatives are similar:
\begin{equation} \label{eq:tgp}
{\partial/\partial u_{k}\choose \partial/\partial v_{k}}
=\left( \begin{array}{cc} 0 & 1 \\ \mb_k(0) & \ma_k(0) \end{array} \right) 
{\partial/\partial u_{k-1} \choose \partial/\partial v_{k-1}}.
\end{equation}
This is an expression of the induced isomorphism $T_{P_n}\to T_{P_{n-1}}$
of the tangent spaces.  
The transposed version of (\ref{eq:jetbp1})
\begin{equation} \label{eq:tsbp1}
\left( \begin{array}{cc} 0 & 1 \\ \mb_n(0) & \ma_n(0) \end{array} \right) \,\cdots\,
\left( \begin{array}{cc} 0 & 1 \\ \mb_1(0)  & \ma_1(0) \end{array} \right) =
\left( \begin{array}{cc} 1 & 0 \\  0 & 1 \end{array} \right)
\end{equation}
expresses the dual to (\ref{eq:grcjett}) requirement that the composition
\mbox{$T_{P_n} \!\!\to \ldots\to T_{P_1} \!\to T_{P_{n}}$} 
must be the identity map. (See the same Remark \ref{rm:fulltg}.) 
The partial matrix products
\begin{equation} \label{eq:jetbpk}
\left( \begin{array}{cc} 0 & 1 \\ \mb_k(0) & \ma_k(0) \end{array} \right) \,\cdots\,
\left( \begin{array}{cc} 0 & 1 \\ \mb_1(0)  & \ma_1(0) \end{array} \right)
% \qquad \mbox{with }  k\in N\setminus \{n\}
\end{equation}
with $k\in \ii\setminus \{n\}$ express the derivatives $\partial/\partial u_{k}$, $\partial/\partial v_{k}$ 
(of the component $g_k$ of presumed $G^1$ functions on $\cM_0^*$) 
in terms of $\partial/\partial u_{n}$, $\partial/\partial v_{n}$ (of the component $g_n$).
This allows us to represent the involved partial derivatives graphically 
as directional derivatives in the tangent space $T_{P_n}$; see Figure \ref{fig:g1v}\refpart{a}.  
The derivatives $\partial/\partial u_{k}$ and $\partial/\partial v_{k-1}$ 
are automatically identified, and the convex sectors $\widehat{\sgma}_k$ 
bounded by consecutive vectors $\vb_{k-1}$, $\vb_k$ represent derivatives in the polygonal corners $P_k$. 
It is not automatic that the fan \cite{Wiki} 
of tangent sectors will cover $\RR^2$ exactly once, as we discuss in \S \ref{sec:topology}.
This schematic picture can visually reveal features and quality of the gluing data 
for CAGD. As explained in \S \ref{rm:gconstr}, we can start constructing 
good gluing data on a settled topological manifold $\cM$ 
by choosing a sector partition as in Figure \ref{fig:g1v}\refpart{a} around each vertex,
drawing directional derivatives on the sector boundaries, and copying relations 
between derivative vectors to the values $\ma_k(0)$, $\mb_k(0)$
to be interpolated to a global $G^1$ gluing structure.

% By an affine-linear transformation, we can move the point $P_n$ to the origin of $\RR^n$,
% and draw the sector of $\sgma_n$ in the first quadrant.  

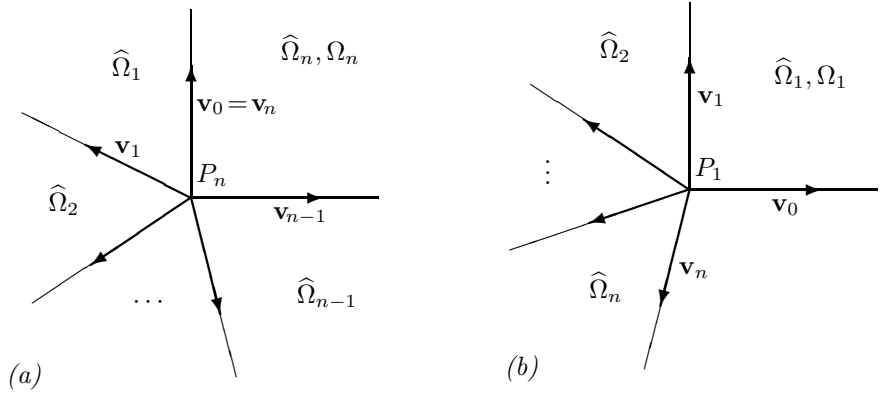
\begin{figure} \[ % \vb_k
\begin{picture}(160,144)(-72,-72)
\put(0,0){\line(1,0){71}}  \put(0,0){\line(0,1){71}} 
\put(0,0){\line(-2,1){64}} \put(0,0){\line(-3,-2){60}}
\put(0,0){\line(1,-4){17}} % \put(50,-50){\line(1,1){50}} 
\thicklines \put(0,0){\vector(1,0){50}}  \put(0,0){\vector(0,1){50}} 
\put(0,0){\vector(-2,1){40}} \put(0,0){\vector(-3,-2){38}}
\put(0,0){\vector(1,-4){11}}  \put(2,6){$P_n$}  
\put(-30,46){$\widehat{\sgma}_1$} 
\put(34,52){$\widehat{\sgma}_n,\sgma_n$}  
\put(40,-40){$\widehat{\sgma}_{n-1}$}     \put(-54,-5){$\widehat{\sgma}_2$}  
\put(-22,-42){$\cdots$}  \put(-70,-70){\refpart{a}}
\put(2,33){$\vb_0\!=\!\vb_{\!n}$} \put(-29,17){$\vb_1$} \put(31,-8){$\vb_{\!n-1}$} 
\end{picture} \qquad\;\;
\begin{picture}(160,150)(-75,-75)
\put(0,0){\line(1,0){71}}  \put(0,0){\line(0,1){71}} 
\put(0,0){\line(-3,2){60}} \put(0,0){\line(-3,-1){68}}
\put(0,0){\line(-1,-4){17}} % \put(50,-50){\line(1,1){50}} 
\thicklines \put(0,0){\vector(1,0){50}}  \put(0,0){\vector(0,1){50}} 
\put(0,0){\vector(-3,2){40}} \put(0,0){\vector(-3,-1){38}}
\put(0,0){\vector(-1,-4){11}}  \put(2,5){$P_1$}  
\put(-34,50){$\widehat{\sgma}_2$}  
\put(32,40){$\widehat{\sgma}_1,\sgma_1$}   
\put(-38,-41){$\widehat{\sgma}_n$} 
 \put(-55,2){$\vdots$}  \put(-70,-70){\refpart{b}}
\put(3,33){$\vb_1$} \put(-4,-32){$\vb_n$} \put(31,-8){$\vb_0$} 
\end{picture} \]
\caption{Tangent sectors around interior and boundary vertices}  
\label{fig:g1v}
\end{figure}

\begin{example} \rm \label{ex:hahnsym}
A simple way to choose the relations between derivatives along polygonal edges
at the vertex $\cP_0=\{P_1,\ldots,P_n\}$ is to choose $n$ vectors of the same length,
and evenly space them at the angle $2\pi/n$. The gluing data then has
\begin{equation} \label{eq:hsym}
\ma_k(0) = 2\cos{\small \frac{2\pi}n}, \qquad \mb_k(0)=-1
\end{equation}
for all $k\in N$. 
In the context of gluing rectangles, Hahn \cite[\S 8]{Hahn87} proposed to choose
this symmetric $G^1$ (or even $G^r$) gluing structure at all vertices of 
a linear polygonal surface $\cM\supset\cM_0^*$,  and straightforwardly interpolate 
the restricted values $\ma_k(0)$. In particular, if our $\tau_k\sim\tau'_{k-1}$ 
connects $\cP_0$ %$\{P_1,\ldots,P_n\}$ 
to an interior vertex on $\cM$ of valency $m$,
Hahn's proposal is to  take $\mb_k(u_1)=-1$ and let 
$\ma_k(u_1)\in C^r(\tau_1)$ interpolate
\begin{equation}
\ma_k(0)= 2\cos{\small \frac{2\pi}n}, \qquad
\ma_k(1)= -2\cos{\small \frac{2\pi}m},
\end{equation}
with all derivatives of order $\le r$ at both end-points set to zero. 
The minus sign in $\ma_k(1)$ appears because the transformation
of standard coordinates and derivatives is
\begin{equation} \label{eq:rectuv}
(u_k,v_k)\mapsto (1-u_k,v_k), \qquad
\left( \frac{\partial}{\partial u_k},\frac{\partial}{\partial v_k} \right)
\mapsto \left( -\frac{\partial}{\partial u_k}, \,
\frac{\partial}{\partial v_k}\right).
\end{equation}
For comparison, if $\sgma_k$ is a triangle (and $\sgma_{k-1}$ is a rectangle)
then the change of standard coordinates and derivatives is 
(\ref{eq:trcoor})--(\ref{eq:reders}). Then we have to interpolate to 
\begin{equation}
\ma_k(1)= 1-2\cos{\small \frac{2\pi}m}.
\end{equation}
If $\sgma_{k-1}$ is a triangle as well,  then we have to transform
$(v_{k-1},u_{k-1})$ by (\ref{eq:trcoor})--(\ref{eq:reders}) also.
The interpolation is then to 
\begin{equation}
\ma_k(1)= 2-2\cos{\small \frac{2\pi}m},
\end{equation}
like in Example \ref{ex:joining}.
\end{example}
\begin{definition}
% If the vertex $u_k=1$ is on the boundary of $\cM$, 
% we can assume any value $\ma_k(1)$.
In the context of $G^1$ gluing, linear interpolation of the requisite values
$\ma_k(0)$, $\ma_k(1)$ of the symmetric gluing around interior vertices 
(of a linear polygonal surface $\cM$) is often used in CAGD. 
Let us refer to this kind of gluing data as  {\em linear Hahn gluing}.
\end{definition}
\begin{definition} \label{def:crossing}
Suppose that $n=4$ polygons are being glued around the common vertex
$\cP_0\in\cM_0^*$. 
The vertex is called {\em a crossing vertex} if all four edges $\tau_1\sim\tau'_4$,
$\tau_2\sim\tau'_1$, $\tau_3\sim\tau'_2$, $\tau_4\sim\tau'_3$
are joining edges (respectively) at $\{P_1,P_4\}$, $\{P_1,P_2\}$, $\{P_2,P_3\}$, $\{P_3,P_4\}$,
as in Definition \ref{def:join}.
Since we use standard coordinate systems, the characterizing condition
is $\ma_1(0)=\ma_2(0)=\ma_3(0)=\ma_4(0)=0$.
Then condition (\ref{eq:tsbp1}) leads to
\begin{equation} \label{eq:qbetas}
\mb_1(0)\mb_3(0)=1,\qquad \mb_2(0)\mb_4(0)=1.
\end{equation}
The special case $\mb_1(0)=\mb_2(0)=\mb_3(0)=\mb_4(0)=-1$ 
gives Hahn's symmetric gluing (\ref{eq:hsym}).
\end{definition}
\begin{remark} \label{rm:fulltg} \rm
If conditions (\ref{eq:jetcomp}), (\ref{eq:tsbp1}) are not satisfied,
there will be directional derivatives % (say) 
in $T_{P_{n}}$ that always evaluate the component $g_n$ of any $G^1$ 
function of Definition \ref{def:g1v} to zero. 
This is clearly the case for the directions corresponding to
the eigenvalues $\neq 1$ of the matrix product in (\ref{eq:jetbp1}).
If the matrix product %in (\ref{eq:jetbp1}) 
is $1\ 1\choose 0\ 1$,
then the eigenvector direction will give zero derivatives.
Then we do not have a "full" tangent space, that is, enough functionals on the Taylor coefficients.
There should be two linearly independent derivatives at any vertex 
of our  polygonal surfaces % of $\cM_{0}$
acting non-trivially on {considered} spaces of $G^1$ functions.
The next subsection formalizes an additional condition caused by this guiding principle,
generalizing a relevant obstruction in \cite{Peters2010}.
\end{remark}

\subsection{Restriction on crossing vertices}
\label{sec:crossing}

It may seem that linear Hahn gluing always gives spline spaces $G^r(\cM)$
that allow smooth realizations of the surface pieces such as $\cM^*_0\subset\cM$ 
around their common vertices. Rather unexpectedly,
Peters and Fan \cite{Peters2010} noticed that even if
the gluing data satisfies (\ref{eq:jetbp1}) there are feasibility issues 
of smooth realization around crossing vertices.
Their context was linear Hahn gluing of rectangles.
The noticed necessary restriction appears to be topological,
but we generalize it to a local obstruction.
\begin{definition} \label{def:balance}
Let $\cP=\{P_1,P_2,P_3,P_4\}$ denote a crossing vertex of a polygonal surface $\cM$.
A pair of opposite edges $\tau^*_1$, $\tau^*_2$ at $\cP$ is called {\em balanced} if
(at least) one of the following conditions holds:
\begin{itemize}
\item the other endpoints of $\tau^*_1$, $\tau^*_2$ are vertices in $\cM$ of the same order;
\item the other endpoint of $\tau^*_1$ or $\tau^*_2$ is on the boundary of $\cM$.
\end{itemize}
The crossing vertex $\cP$ is called {\em balanced} if both pairs of opposite edges are
balanced.
\end{definition}
The observation in \cite{Peters2010} is this:
if $\cM$ is a linear polygonal surface of rectangles, and the linear Hahn gluing is used, 
then a smooth realization of $\cM$ by polynomial splines exits only if all its crossing vertices
are balanced. It turns out that the obstruction is essentially local after all. It arrises 
because of differentiability of the gluing data and assumed polynomial restrictions.
Here is a generalization of the Peters--Fan obstruction, 
in the local setting of \S \ref{sec:g1vertex}.
\begin{theorem}\label{cond:comp2}
Let $\cM^*_0$ denote a polygonal surface as in \S $\ref{sec:g1vertex}$ with $n=4$.
Suppose that the interior vertex $\cP_0$ is a crossing vertex,
and that the gluing data $(\ma_k,\mb_k)$ for $k\in N=\{1,2,3,4\}$ 
are differentiable functions on the edges $\tau_k$. 
Let $G^1_2(\cM^*_0)$ denote the linear space of $G^1$ functions 
$(g_1,g_2,g_3,g_4)$ on $\cM_0^*$ such that $g_k\in C^2(\sgma_k)$
for $k\in N$. 
A smooth realization of $\cM^*_0$ by functions in $G^1_2(\cM^*_0)$ exists if and only if
\begin{eqnarray} \label{eq:ddv1}
\ma'_1(0)+\frac{\mb'_2(0)}{\mb_2(0)} &=& -\mb_1(0) 
\left(\ma'_3(0)+\frac{\mb'_4(0)}{\mb_4(0)}\right),\\ 
\label{eq:ddv2}
\ma'_2(0)+\frac{\mb'_3(0)}{\mb_3(0)} &=& -\mb_2(0) 
\left(\ma'_4(0)+\frac{\mb'_1(0)}{\mb_1(0)}\right). 
\end{eqnarray}
\end{theorem}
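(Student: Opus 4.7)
The plan is to extract from the $G^1$ cyclic gluing data at the crossing vertex $\cP_0$ the consistency constraints on the second-order Taylor coefficients of functions in $G^1_2(\cM^*_0)$, and then to identify \eqref{eq:ddv1}--\eqref{eq:ddv2} as precisely those conditions under which the first-order data at $\cP_0$ remains freely prescribable, so that a ``full'' tangent plane in the sense of Remark \ref{rm:fulltg} is available. Without this fullness, no triple $(f_1,f_2,f_3)$ in $G^1_2(\cM^*_0)$ can have linearly independent $1$-jets at $\cP_0$, and therefore no $G^1$ smooth realization in $\RR^3$ exists; conversely, once the first-order data at $\cP_0$ is free, any desired tangent plane can be produced locally.

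First I would rewrite the transversal $G^1$ condition \eqref{eq:g1func0} at each of the four glued edges $\tau_k\sim\tau'_{k-1}$, carefully tracking the coordinate swap $(u_k,v_k)\leftrightarrow(v_{k-1},u_{k-1})$ dictated by \eqref{eq:jetb2}. Since $g_{k-1},g_k\in C^2$ and $\ma_k,\mb_k$ are assumed differentiable, I may differentiate each such identity once along the edge and then specialize at $u_k=0$. Using the crossing-vertex assumption $\ma_k(0)=0$, this produces for each $k\in N$ a linear recursion of the shape
\begin{equation*}
e_k \;=\; \mb_k(0)\,e_{k-1} \,+\, \ma'_k(0)\,\frac{\partial g_{k-1}}{\partial v_{k-1}}(P_{k-1}) \,+\, \mb'_k(0)\,\frac{\partial g_{k-1}}{\partial u_{k-1}}(P_{k-1}),
\end{equation*}
where $e_k:=\partial^2 g_k/\partial u_k\partial v_k(P_k)$.

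Next, iterating this recursion once around the $4$-cycle, the accumulated coefficient of $e_1$ equals $\mb_1(0)\mb_2(0)\mb_3(0)\mb_4(0)=1$ by \eqref{eq:qbetas}, so $e_1$ cancels and one is left with a linear identity involving only first-order derivatives of the $g_k$ at the respective vertices $P_k$. The first-order $G^1$ relations at the crossing vertex, together with \eqref{eq:qbetas}, reduce the eight quantities $\partial g_k/\partial u_k(P_k)$, $\partial g_k/\partial v_k(P_k)$ to a two-parameter family $(p,q)$ spanning the intrinsic tangent space at $\cP_0$; the residual identity then takes the form $\mu\,p+\nu\,q=0$ with explicit coefficients $\mu,\nu$ built from $\ma'_k(0),\mb'_k(0),\mb_k(0)$. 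Existence of a $G^1$ smooth realization amounts to $(p,q)$ being freely prescribable, i.e.\ $\mu=\nu=0$; a direct simplification of these two scalar equations using $\mb_1(0)\mb_3(0)=\mb_2(0)\mb_4(0)=1$ rewrites them in exactly the form \eqref{eq:ddv1}--\eqref{eq:ddv2}.

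The main obstacle is organizational rather than conceptual: the cyclic indexing of the four polygons combined with the repeated coordinate swaps $(u_k,v_k)\leftrightarrow(v_{k-1},u_{k-1})$ makes the bookkeeping delicate, and one must track carefully where each $\ma_k,\mb_k$ is evaluated and in which direction derivatives are taken. Once this is fixed, the cancellation of $e_1$ is forced automatically by \eqref{eq:qbetas}, and the final algebraic manipulation producing \eqref{eq:ddv1}--\eqref{eq:ddv2} is routine. For the ``if'' direction one additionally checks that, when these equations hold, arbitrary compatible $2$-jets at $\cP_0$ can be realized as $2$-jets of $C^2$ functions on each $\sgma_k$ (for instance, prescribing Taylor polynomials locally and extending by smooth cut-off), providing a triple in $G^1_2(\cM^*_0)$ with any prescribed linearly independent first-order data at $\cP_0$.
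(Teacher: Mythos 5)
Your proposal is correct and follows essentially the same route as the paper: your scalar recursion for $e_k=\partial^2 g_k/\partial u_k\partial v_k(P_k)$ is exactly the third row of the paper's extended transformation (\ref{eq:trE3}), iterating it around the $4$-cycle is the paper's $3\times3$ matrix product, the cancellation of $e_1$ via $\mb_1(0)\mb_2(0)\mb_3(0)\mb_4(0)=1$ is the $(3,3)$-entry of that product being $1$, and your coefficients $\mu,\nu$ are the paper's $H_1,H_2$, whose vanishing is forced by the same non-singularity argument.
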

\begin{proof}
We differentiate (\ref{eq:tbiso}) with respect to $u_1$ and adjust the index notation,
to get this transformation of the mixed derivative for the presumed $g_k$:
\[
\frac{\partial^2}{\partial u_k\partial v_k} \mapsto  \ma'(u_k)\frac{\partial}{\partial u_{k-1}} 
+\ma(u_k)\frac{\partial^2}{\partial u_{k-1}^2} +\mb'(u_k)\frac{\partial}{\partial v_{k-1}}
+\mb(u_k)\frac{\partial^2}{\partial u_{k-1}\partial v_{k-1}}.
\]
Transformation (\ref{eq:tgp}) of derivatives at $u_1=0$ is extended to 
\begin{equation} \label{eq:trE3}
\left( \begin{array}{c} \partial/\partial u_{k}\\ \partial/\partial v_{k} \\ 
\!\partial^2/\partial u_{k}\partial v_{k}\!\! \end{array} \right)
=\left( \begin{array}{ccc} 0 & 1 & 0 \\ \mb_k(0) & 0 & 0 \\
\mb'_k(0) & \ma'_k(0) & \mb_k(0) \end{array} \right) 
\left( \begin{array}{c} \partial/\partial u_{k-1}\\ \partial/\partial v_{k-1} \\ 
\!\partial^2\!/\partial u_{k-1}\partial v_{k-1}\!\!\! \end{array} \right) \!.
\end{equation}
Following (\ref{eq:tsbp1}) and keeping in mind (\ref{eq:qbetas}),
we compute the matrix product 
\[
\left( \begin{array}{ccc} 0 & 1 & 0 \\ \mb_4(0) & 0 & 0 \\
\mb'_4(0) & \ma'_4(0) & \mb_4(0) \end{array} \right) \cdots
\left( \begin{array}{ccc} 0 & 1 & 0 \\ \mb_1(0) & 0 & 0 \\
\mb'_1(0) & \ma'_1(0) & \mb_1(0) \end{array} \right)
=\left( \begin{array}{ccc} 1 & 0 & 0 \\ 0 & 1 & 0 \\
H_1 & H_2 & 1 \end{array} \right) \,
\]
with
\begin{align*}
H_1 & = \ma_4'(0)+\mb_3(0)\mb'_1(0)+\mb_4(0)\big(\ma_2'(0)+\mb_1(0)\mb'_3(0)\big),\\
H_2 & = \ma_3'(0)+\mb_2(0)\mb'_4(0)+\mb_3(0)\big(\ma_1'(0)+\mb_4(0)\mb'_2(0)\big).
\end{align*}
Since the derivative $\partial^2g_n/\partial u_{n}\partial v_{n}$ is well defined,
we have 
\begin{equation} \label{eq:singproof}
\left( H_1\frac{\partial}{\partial u_{n}}+H_2\frac{\partial}{\partial u_{n}}\right) g_n=0.
\end{equation}
If the differential operator is not zero, it gives a directional derivative
(in terms of Figure \ref{fig:g1v} \refpart{a}, towards the interior of one of the polygons)
that annihilates any function in $G^1_2(\cM^*_0)$. Any realization of $\cM^*_0$
would be singular at $\cP_0$ in that direction. Therefore we must have
$H_1=H_2=0$, leading to (\ref{eq:ddv1})--(\ref{eq:ddv2}) after applying (\ref{eq:qbetas}).
\end{proof}
In the Peters-Fan setting \cite{Peters2010}, we have all $\mb_k(0)=-1$, $\mb_k'(0)=0$.
The restrictions (\ref{eq:ddv1})--(\ref{eq:ddv2}) are then simply 
\begin{equation} \label{eq:quadrsym}
\ma'_1(0)=\ma'_3(0), \qquad \ma'_2(0)=\ma'_4(0).
\end{equation}
With the linear Hahn gluing of rectangles, the derivatives $\ma'_k(0)$ are determined
by the order of the neighboring vertices. The mentioned balancing condition follows.
% It is interesting to check that equations (\ref{eq:ddv1})--(\ref{eq:ddv2}) are invariant
% under the transformation $(\ma,\mb)\mapsto (-\ma/\mb,1\/mb)$ permuting the the polygons.
\begin{example} \label{ex:pfcontra} \rm
This examples shows that a smooth realization may exist with the equations (\ref{eq:ddv1})--(\ref{eq:ddv2})
not fulfilled, if the $C^2$-differentiability condition is dropped. 
Let us assume that the four polygons of $\cM^*_0$ are the rectangles \mbox{$\Omega_1=[0,1]^2$},
$\Omega_2=[-1,0]\times [0,1]$, $\Omega_3=[-1,0]^2$, $\Omega_4=[0,1]\times [-1,0]$,  and set
\begin{align*}
\textstyle \ma_1(u_1)=\frac12 u_1, \hspace{25pt} 
& \quad \ma_2(u_2)=\ma_3(u_3)=\ma_4(u_4)=0,\\
\textstyle \mb_1(u_1)=-1+\frac12 u_1, 
& \quad \mb_2(u_2)=\mb_3(u_3)=\mb_4(u_4)=-1.
\end{align*}
Conditions (\ref{eq:ddv1})--(\ref{eq:ddv2}) are not satisfied. 
Using the global coordinates \mbox{$x=u_1=\!-v_2=\!-u_3=v_4$}, $y=v_1=u_2=-v_3=-u_4$,
consider these functions on $\cM^*_0$:
\begin{align*} \textstyle
G_1 = \big(x-\frac14 x^2,x,x,x-\frac14 h(x,y)\big),\qquad
G_2  = \big(y+\frac14 x^2,y,y,y+\frac14 h(x,y)\big),
\end{align*}
with $h(x,y)=(x+y)^2$ for $x+y\ge 0$ and $h(x,y)=0$ for $x+y\le 0$.
We have \mbox{$h(x,y)\in C^1(\Omega_4)$} and $G_1,G_2\in G^1(\cM^*_0)$.
The functions $(G_1,G_2,1)$ define a smooth embedding of $\cM^*_0$ into $\RR^3$. 
Lemma \ref{cond:comp2} does not apply because \mbox{$h(x,y)\not\in C^2(\Omega_4)$}.
On the level of polynomial specializations of $G_1,G_2$, 
the common vertex is not a crossing vertex % but has valency 5
as $\Omega_4$ is split along $u_4=v_4$ for them. % $G_1,G_2$.
\end{example}

\subsection{Topological restrictions}
\label{sec:topology}

Degenerations of geometrically continuous gluing reveal subtle
differences between the differential geometry setting and CAGD objectives.
Particularly condition \refpart{A2} is not reflected in the transformations 
(\ref{eq:tbiso}), (\ref{eq:tgp}) and constrains (\ref{eq:grcjett}), (\ref{eq:tsbp1}).
As we mentioned before Proposition \ref{def:g1vf}, condition \refpart{A2}
leads to the requirement that $\mb(u_1)$ in (\ref{eq:jetb1}), (\ref{eq:tbiso})
must be  a negative function on $\tau_1$. 
The particular condition $\mb(u_1)<0$ depends on our consistent use
of positive coordinates on the polygons. 
In loose intuitive terms, the coordinate function $v_1$ on $\sgma_1$ 
{\em should be} negative on $\sgma^0_2$, so it {\em should ``project"} to $v_2$ 
with a negative coefficient. Equivalently, the standard derivatives such as 
$\partial/\partial v_1$, $\partial/\partial v_2$ always give transversal vector fields 
pointing towards the interior sides, in contrast to Proposition \ref{def:g1vf}.
% For example, if we set $\mb(u_1)=1$ in (\ref{eq:exdev}), 
% the non-negative function $(v_1,v_2)$ on $\cM_0$ 
% (with non-zero transversal derivatives on $\tau_1,\tau_2$) 
% would be a $G^1$ function.

If one takes $\mb(u_1)>0$ then the implied transition map $\psi_0:U_1\to U_2$ 
identifies interior points in $\Omega_1\cup U_1$ and $\Omega_2\cup U_2$ of 
the two polygons of \S \ref{sec:edgeglue}. This is not desirable in CAGD, 
as we want the underlying topological surface to be as in Definition \ref{def:g0complex}. 
One can formally define the space $G^1(\cM_0)$ with $\mb(u_1)>0$
and use it to model surfaces with a sharp edge, so that ``smooth" realizations
in $\RR^3$ would have polygonal patches meeting at the angle $0$ rather
than the properly continuous angle $\pi$. But then $G^1(\cM_0)$ should not be
identified with the $C^1$ functions on $\cM_r$ of \S \ref{sec:edgeglue} (with $r=1$). 
For example, the first component of a $C^1$ function $(f_1,f_2)$ on $\cM_r$
then determines $f_2$ uniquely on $\Omega_2\cup U_2$.

Most strikingly, the algebraic conditions (\ref{eq:grcjett})--(\ref{eq:tsbp1}) 
of geometric continuity around vertices % in \S \ref{sec:g1vertex} 
allow the tangent sectors in Figure \ref{fig:g1v}\refpart{a} to overlap
in a "winding up" fashion,  beyond the angle $2\pi$. 
If this happens, a %``smooth" 
realization of $\cM^*_0$ (of \S \ref{sec:g1vertex}) in $\RR^3$
by $G^1$ functions will have the interior vertex $\cP_0$ as a similarly branching point,
with the polygonal patches winding up and forming  
a self-intersecting surface in $\RR^3$ around the image of $\cP_0$.

This "winding up" degeneracy can even happen at a boundary vertex (of a large polygonal surface $\cM$),
where a sequence of polygons $\Omega_1,\ldots,\Omega_n$ is glued
around a common vertex $\{P_1,\ldots,P_n\}$ as in Figure \ref{fig:g1v}\refpart{b}, %\S \ref{sec:g1vertex},
without  $\lambda_1:\tau_1\to\tau'_n$. All tangent spaces $T_{P_k}$
can be transformed to $T_{P_1}$ by following the partial matrix products
\begin{equation} \label{eq:jetbpk2}
\left( \begin{array}{cc} 0 & 1 \\ \mb_k(0) & \ma_k(0) \end{array} \right) \,\cdots\,
\left( \begin{array}{cc} 0 & 1 \\ \mb_2(0)  & \ma_2(0) \end{array} \right)
\qquad \mbox{with }  k\in \{2,\ldots,n\}
\end{equation}
like in (\ref{eq:jetbpk}), and draw a similar set of sectors in $T_{P_1}$.
Normally, the union of sectors should be a proper subset of $\RR^2$,
but the algebraic conditions do not disallow them to overlap beyond the angle $2\pi$.

For both interior and boundary vertices, it is enough to require that no other sector 
intersects an initial sector $\widehat{\Omega}_1\subset T_{P_1}$ or $\widehat{\Omega}_n\subset T_{P_n}$.
By an affine linear transformation, we can have $(1\ 0)$ and $(0\ 1)$ 
as the generating vectors of the initial sector. Then the initial sector coincides with the first quadrant.
The condition of not intersecting the first quadrant % initial sector 
is formulated as follows.
\begin{lemma} \label{th:sectors}
Let $M$ denote a $2\times 2$ matrix with real entries.
Let $\sgma$ denote a convex sector in $\RR^2$ from the origin,  
generated by the two row vectors of $M$.
Then $\sgma$ intersects the first quadrant if and only if
there are two positive entries in a row of $M$, or on the main diagonal.
\end{lemma}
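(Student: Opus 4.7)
The plan is to reformulate the geometric condition as a linear-inequality system and dispatch both directions by sign case analysis, with the convexity hypothesis providing the crucial constraint $\det M>0$. Writing the rows of $M$ as $r_1=(a,b)$ and $r_2=(c,d)$, the cone $\sgma=\{\lambda r_1+\mu r_2:\lambda,\mu\ge 0\}$ meets the open first quadrant iff there exist $\lambda,\mu\ge 0$ with $\lambda a+\mu c>0$ and $\lambda b+\mu d>0$ simultaneously.

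For the forward implication, the first inequality forces $a>0$ or $c>0$ and the second forces $b>0$ or $d>0$, so the two positive entries produced sit in one of four configurations: row~1, row~2, main diagonal, or anti-diagonal. The first three match the lemma; the ``pure'' anti-diagonal configuration ($b,c>0$ with $a,d\le 0$) I would rule out by solving the inequality system directly, obtaining the condition $|a||d|<bc$, i.e.\ $\det M<0$. This contradicts the convex-sector hypothesis, which (under the CCW orientation of the rows inherited from the surrounding setup, where $\sgma$ is a genuine sector of angle $<\pi$) forces $\det M>0$.

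For the reverse implication, the row cases are immediate: if $(a,b)$ or $(c,d)$ has both entries positive, then $r_1$ or $r_2$ itself lies in the first quadrant and hence in $\sgma$. The main-diagonal case ($a,d>0$) reduces to its genuinely new sub-case with $b,c\le 0$; mirroring the forward argument, the two coordinate inequalities are simultaneously satisfiable with $\lambda,\mu>0$ iff $ad>|b||c|$, again equivalent to $\det M>0$ and so provided by the convex-sector hypothesis.

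The main obstacle is nothing deep, but rather careful sign bookkeeping across the sub-cases, together with direct inspection of the degenerate configurations where some entries of $M$ vanish (rows on an axis). All such cases collapse to the single underlying fact: the sign of $\det M$ controls on which side of the generating rays $\sgma$ lies, so the admissible sign patterns for the entries of $M$ (rows or main diagonal) are precisely those consistent with $\sgma$ meeting the first quadrant under $\det M>0$.
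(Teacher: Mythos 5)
Your proof is correct, and it takes a genuinely different route from the paper's. The paper argues qualitatively: if $\sgma$ meets the first quadrant $\sgma_0$, then either a boundary ray of $\sgma$ lies inside $\sgma_0$ (giving a row of positive entries) or $\sgma_0\subseteq\sgma$, in which case convexity places the two generating rays on opposite sides of $\sgma_0$ and the diagonal entries are positive; the ``if'' direction is left to the reader. You instead encode membership of $\lambda r_1+\mu r_2$ in the open first quadrant as the system $\lambda a+\mu c>0$, $\lambda b+\mu d>0$ with $\lambda,\mu\ge 0$ and run a sign analysis, invoking $\det M>0$ to exclude the pure anti-diagonal pattern in the forward direction and to guarantee solvability in the main-diagonal case of the converse. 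Your version makes explicit two things the paper's three-line proof does not: the converse implication, and the precise role of orientation. Indeed the statement is false as literally written for the matrix with rows $(0,1)$ and $(1,-1)$ --- its sector contains the entire first quadrant, yet neither row nor the main diagonal has two positive entries --- so the hypothesis $\det M>0$ (counterclockwise ordering of the rows), which you isolate explicitly, which the paper compresses into the phrase ``by the convexity assumption,'' and which holds in the intended application because each factor in (\ref{eq:jetbpk}) has determinant $-\mb_k(0)>0$, is genuinely load-bearing. The trade-off is that the paper's argument is shorter and keeps the geometric picture of which rays can land where in plain view, while yours is longer but airtight on the sign bookkeeping and covers both implications.
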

\begin{proof}
Let $\sgma_0$ denote the first quadrant. 
If $\sgma$ and $\sgma_0$ intersect, 
then either (at least) one of the boundaries of $\sgma$ lies inside $\sgma_0$, 
or $\sgma_0$ lies completely inside $\sgma_1$.
In the former case, we have a row of positive entries. 
In the latter case, the diagonal entries are positive by the convexity assumption. 
% Convexity is important.
\end{proof}
Here is a formulation of restrictions to prevent the ``winding up" tangent spaces 
and locally self-intersecting realizations in $\RR^3$:
\begin{itemize}
\item[\refpart{B1}] For an interior vertex, we require % the tangent sectors in Figure \ref{fig:g1v})
% to form a {\em fan} \cite{Wiki} covering the whole $\RR^2$ exactly once.
that every partial matrix product (\ref{eq:jetbpk}) has 
a non-positive entry on each row and on the main diagonal.
\item[\refpart{B2}] For an boundary vertex of valency $n$, we require % the tangent sectors in a similar picture
% defined by (\ref{eq:jetbpk2}) to cover a propoer subset of $\RR^2$.
that every matrix product (\ref{eq:jetbpk2})
has a non-positive entry on each row and on the main diagonal,
and the bottom row of the product with $k=n$ should not be $(1\ 0)$.
\end{itemize}
In some applications, one my wish the stronger condition on boundary vertices
of the total angle of the tangents sectors to be less than $\pi$. 
Then the matrix products in (\ref{eq:jetbpk2}) should not have negative entries
in the second column.

As we see, the unstated CAGD-oriented requirement to fit the polygons tightly
leads to significant deviations from the differential geometry setting.
Even the common sense requirement \refpart{iii} % of $\mb<0$ along the whole edge,
of Definition \ref{def:g0complex} 
%that no more than two polygons are glued along one edge 
can be viewed in this light. 
This type of conditions are absent in differential geometry,
where all gluing is determined by the equivalence relation 
(of points on the coordinate open sets in $\RR^2$) defined by
the transition maps. Only with \refpart{B1}, \refpart{B2} 
satisfied we have the intended topological space $\cM$ of the corresponding differential surface.
Only then we can talk sensibly about ``full"  tangent spaces at the common vertices
such as $\cP$ in $\cM_0$, defined by the equivalence isomorphisms $T_P\to T_{\lambda(P)}$. 
% \todo{Whole Tangent plane continuity}

The topological constraints % (even $\mb<0$) 
could be dropped in some applications, for example, 
when modeling analytical surfaces with branching points, 
or surfaces with sharp wing-like ``interior" edges,
or with winding-up boundary. In these specific applications,
one should extend Definition \ref{def:crossing},
modify Theorem \ref{cond:comp2} 
so to allow winding up of $8, 12, \ldots$ polygons along joining edges,
or a combination (with any even number of polygons) 
of winding up and ``reflection" back from contra-\refpart{A2} sharp edges.
Apart from this consideration, 
the topological constrains do not % essentially 
affect our algebraic dimension count in \S \ref{sec:dformula}.

\section{Geometrically continuous splines}
\label{sec:gsplines}

We are ready to define a $G^1$ version of polygonal surfaces of Definition \ref{def:g0complex}.
The following data structure should be useful in most applications.
Allowing T-splines \cite{He:2006} elegantly would be an important adjustment.
% make the definitions more bulky but still manegable.
\begin{definition} \label{def:g1complex}
Let $\ii_0,\ii_1$ denote finite sets.
A {\em (geometrically continuous) $G^1$ polygonal surface} is a collection
$(\{\sgma_k\}_{k\in \ii_0},\{(\lambda_k,\Theta_k)\}_{k\in \ii_1})$ such that
\begin{enumerate}
\item The pair $(\{\sgma_k\}_{k\in \ii_0},\{(\lambda_k)\}_{k\in \ii_1})$ 
is a linear $G^0$ % topological 
polygonal surface of Definition \ref{def:g0complex}.
\item Each $\Theta_k$ % ($k\in N_1$) 
is an isomorphism of tangent (or jet) bundles
on the polygonal edges glued by $\lambda_k:\tau_{k}\to{\tau}'_{k}$,
compatible with $\lambda_k$ and condition \refpart{A2}.
Concretely, $\Theta_k$ can be given by one of the following objects
(with adaptation of their local context in \S \ref{sec:geoglue}):
\begin{itemize}
\item a transition map satisfying \refpart{A1} and  \refpart{A2};
\item a jet bundle isomorphism as in (\ref{eq:jetb1}), with $\mb(u_1)<0$;
\item a tangent bundle isomorphism as in (\ref{eq:tbiso}), with $\mb(u_1)<0$;
\item an identification of transversal vector fields as in Proposition \ref{def:g1vf}.
\end{itemize}
\item At each interior vertex, %$\{P_1,\ldots,P_n\}$, 
the following restrictions must be satisfied:
\begin{itemize}
\item one of the equivalent conditions (\ref{eq:grcjet}), (\ref{eq:grcjett}), (\ref{eq:jetbp1}), (\ref{eq:tsbp1});
\item conditions (\ref{eq:ddv1})--(\ref{eq:ddv2});
\item condition \refpart{B1}.
\end{itemize}
\item At each boundary vertex, %$\{P_1,\ldots,P_n\}$, 
condition \refpart{B2} must be satisfied.
\end{enumerate}
\end{definition}
\begin{definition} 
A {\em $G^1$ function} on the $G^1$ polygonal surface $\cM$ of Definition \ref{def:g1complex}
is a collection $(g_\ell)_{\ell\in \ii_0}$ with \mbox{$g_\ell\in C^1(\sgma_\ell)$}, 
such that for any $k\in \ii_1$ and the defined gluing 
$(\lambda_k:\tau_1\to\tau_2,\Theta_k)$
of polygonal edges $\tau_1\subset\Omega_i$, $\tau_2\subset\Omega_j$ (with $i,j\in \ii_0$)
the functions $g_i,g_j$ %$g_i(u_1,v_1)$,  $g_j(u_2,v_2)$ 
are related as % $g_i|_{\tau_1}=g_j\circ \lambda_k$;
\begin{equation}
g_i|_{\tau_1}=g_j\circ \lambda_k, \qquad
(D_1g_i)|_{\tau_1} = (D_2g_j)  \circ \lambda_k.
\end{equation}
Here we assume the gluing data $\Theta_k$ is given in terms 
of transversal vector fields $D_1:\tau_1\to\RR^2$, $D_2:\tau_2\to\RR^2$ as in Proposition \ref{def:g1vf}.

Alternatively, a {\em $G^1$ function} on $\cM$
is a continuous function on the underlying $G^0$ polygonal surface,
such that the corresponding conditions (\ref{eq:g1func0}) hold on all interior edges $\tau_1\sim\tau_2$,
where $(u_1,v_1)$,  $(u_2,v_2)$ are coordinate systems attached to $\tau_1,\tau_2$ 
and matched by $\lambda_k:\tau_1\to\tau_2$,
and $\ma(u_1),\mb(u_1)$ represent the gluing data in those coordinates.
Note that the compatibility conditions of \S \ref{sec:g1vertex}, \S \ref{sec:crossing}
apply to the gluing data, not to the splines.
\end{definition}
\begin{definition} \label{def:g1splines}
A $G^1$ polygonal surface is called {\em rational} if for any $k\in\ii_1$
% A {\em rational $G^1$ polygonal surfaces}: A tangent bundle isomorphism is given by the rational 
the gluing data $\Theta_k$ is defined by rational functions $\ma_k(u_k),\mb_k(u_k)\in \RR(u_k)$
in  (\ref{eq:jetb1}) or (\ref{eq:tbiso})
with respect to some (or any) coordinate systems $(u_k,v_k)$ attached to the glued edges.
In particular, we have
\begin{equation} \label{eq:edgeabc}
\ma_k(u_k)=\frac{b_k(u_k)}{a_k(u_k)}, \qquad
\mb_k(u_k)=\frac{c_k(u_k)}{a_k(u_k)}
\end{equation}
for some polynomials $a_k(u_k),b_k(u_k),c_k(u_k)\in\RR[u_k]$ without a common factor.
A $G^1$ function on a rational $G^1$ polygonal surface is a {\em (polynomial) spline}
if all restrictions $g_\ell$ ($\ell\in \ii_0$) to the polygons are polynomial functions.
\end{definition}
\begin{remark} \rm \label{rm:parametric}
The special case of constant $\ma_k(u_k),\mb_k(u_k)$ % in  (\ref{eq:jetb1}) 
is equivalent  to {\em $C^1$ parametric continuity}, as termed in CAGD. 
In the context of \S \ref{sec:edgeglue}, formula (\ref{eq:jetb1}) then 
gives an affine-linear map that transforms $\sgma_1$
to a polygon $\sgma^*_1$ next to $\sgma_2$, with the edge $\tau_1$
mapped onto $\tau_2$. The $G^1$ functions on $\cM_0$ then correspond to
the $C^1$ functions on $\sgma^*_1\cup\sgma_2$.
% \todo{Add torus example \ref{ex:torus}. }
\end{remark}
\begin{remark} \rm
Definitions \ref{def:g0complex}, \ref{def:g1complex} allow surfaces of arbitrary topology,
including non-orientable surfaces. To introduce an orientation, one can fix an ordering
on the standard coordinate systems (of Definition \ref{def:stcoor}) on each edge
and keep the orderings of coordinates compatible % on different edges
by properly relating them on the edges of each common polygon and around every common vertex.
If $\cM$ is an orientable surface with boundary, an orientation of $\cM$ %will also 
gives orientations of the boundary components as well.

In \cite[Chapter 3]{Peters:2008}, orientation and {\em rigid embeddings} of polygons into $\RR^2$ 
allow to resolve the topological restriction \refpart{A2} automatically.
By itself, orientation is not relevant to the topological issues of \S \ref{sec:topology}.
In particular, all triangles (or rectangles) are linearly equivalent in any orientation
via the barycentric (or tensor product) coordinates presented in \S \ref{sec:polyfaces} here.
As discused in \S \ref{sec:topology},  the negative sign of $\mb_k(u)$ 
is determined by our choice of positive coordinates on each polygon. 
% or by the implicit choice of all transversal vectors to the inward side  as in Example \ref{ex:joining}.
\end{remark}
\begin{remark} \rm
Definitions \ref{def:g0complex}, \ref{def:g1complex} allow edges to connect a vertex of $\cM$ to itself,
via sequences of glued polygonal vertices. For example, one can define $G^1$ surfaces without a boundary
from a single rectangle, by identifying their opposite edges in classical ways \cite{cltopology} to form a torus,
a Klein bottle or a projective space. Their $G^1$ gluing data can be even defined as in Remark \ref{rm:parametric},
utilizing parametric continuity along both obtained interior edges up to affine translations and reflections.
Similar surfaces with a single vertex and made up of two triangles are presented 
in \cite[Examples 6.16--6.18, 6.23, 6.31]{raimundas}.
The edges in the simple polygonal surfaces $\cM_0$, $\cM^*_0$ of \S \ref{sec:edgeglue}, \S \ref{sec:g1vertex}
connect different topological vertices.
\end{remark}
\begin{example} \rm \label{ex:bercovier} %Bercovier and Matskewich 
In \cite{Bercovier}, the $C^1$ structure of a mesh of quadrilaterals in $\RR^2$
is translated to a $G^1$ gluing structure on a set of rectangles (or copies of the square $[0,1]^2$)
via bilinear parametrizations of the quadrilaterals. 
The bilinear parametrizations are unique up to the symmetries 
of the tensor product coordinates of the rectangles (see \S \ref{sec:polyfaces} here).
Given two adjacent quadrilaterals $O_1P_1Q_1R_1$, $O_1P_1Q_2R_2$,
the identification of $C^1$ functions on their union with $G^1$ functions on
two rectangles (glued along the pre-images of $O_1P_1$) 
gives rational $G^1$ gluing data with polynomials 
$a_k$, $b_k$, $c_k$ in (\ref{eq:edgeabc}) of degree $\le 1, \le 2, \le 1$.
Analysis of the explicit B\'ezier coefficients in \cite[(29)]{Bercovier} reveals
that the polynomial $b_k$ is linear iff the {\em twist vectors}
\mbox{$\,\overrightarrow{\!O_1P_1}+\,\overrightarrow{\!Q_1R_1}$}, 
$\,\overrightarrow{\!O_1P_1}+\,\overrightarrow{\!Q_2R_2}$
are linearly dependent. 
The polynomial $a_k$ is a constant iff $\,\overrightarrow{\!R_2Q_2}$ 
is parallel to $\,\overrightarrow{\!O_1P_1}$.
In particular, we cannot have constant $a_k,c_k$ and quadratic $b_k$ in
this construction.  The functions $\ma_k$, $\mb_k$ are polynomials of degree 1
iff the twist $\,\overrightarrow{\!O_1P_1}+\,\overrightarrow{\!Q_2R_2}$ is zero.
% The zero twist $\vec{O_1P_1}+\vec{Q_2R_2}$ gives linear polynomials.
If both twist vectors are zero, then $\mb_k$ is a constant and $\ma_k$ is a linear polynomial. 
It is commonly agreed (in \cite{flex} as well)
that constructions with zero twists lead to unsatisfactory spline spaces. 
% of splines  with zero twist % (see Example \ref{ex:bercovier}) are not satisfactory. 
\end{example}
\begin{example} \rm \label{eq:octahed}
The octahedral example in \cite{VidunasAMS} is constructed 
from 8 triangles, combinatorially glued in the same way as the Platonic octahedral.
The polygonal surface has 12 edges and 6 vertices. 
% Its topology is that of a sphere.
Topologically, it is a compact, orientable surface of genus $0$, homeomorphic to a sphere.
All vertices are crossing vertices, and the gluing data on each edge 
is the same symmetric gluing of Example \ref{ex:joining}.
A corresponding structure of a $C^1$ differential surface is given in \cite[\S 4]{VidunasAMS},
with all transition maps (around the edges and vertices)
being fractional-linear rational functions. %maps on $\RR^2$.
A dimension formula for spline spaces of bounded degree is recalled 
in Example \ref{ex:platonic} here. 

Generally,  the rational gluing data with constant $\gamma(u)=\gamma$
and linear $\beta(u)=\lambda u+\eta$ is realized by the fractional-linear transition map
\begin{equation} \label{eq:frlintr}
(u,v)\mapsto \left( \frac{u+\eta\,v}{1-\lambda \,v}, \frac{\gamma\,v}{1-\lambda\,v} \right).
\end{equation}
In fact, any rational gluing data can be realized by a rational transition map.
A corresponding transition map for (\ref{eq:edgeabc}) is
\begin{equation}
(u,v)\mapsto \left( \frac{a_k(u)\,u+\eta(u)\,v}{a_k(u)-\lambda(u)\,v},
\frac{c_k(u)\,v}{a_k(u)-\lambda(u)\,v} \right)
\end{equation}
for any expression $b_k(u)=\lambda(u)\,u+\eta(u)$.
One can add $O(v^2)$ terms in the numerators or the denominator.
The composition of these transition maps around a vertex 
is the identity only if they are all fractional-linear maps as in (\ref{eq:frlintr}).
\end{example}

\subsection{Constructing a polygonal surface}
 \label{rm:gconstr}
 
% \begin{remark} \label{rm:gconstr} \rm
The initial step of constructing a $G^1$ polygonal surface is choosing  the topology 
and incidence complex \cite{Ziegler00}
of the underlying $G^0$ polygonal surface $\cM$. 
For a surface of genus $0$ with boundary, one can simply use parametric continuity
of a polygonal mesh in $\RR^2$, or start with a polygonal mesh.
% subdivision of a ``big" polygonal region , and copy the edge identifications. 
For a closed surface of genus $0$, one can start with a convex polyhedron in $\RR^3$ 
and copy the incidence relations of its edges and facets. For surfaces of genus $g$,
one can start with a polygonal subdivision of a regular $4g$-gon, 
copy its edge identifications and append parallel identifications of the opposite edges \cite{cltopology}.
For non-orientable surfaces, one can similarly start with an appropriate regular polygon
and append defining identifications of the opposite edges.

% Each vertex of valency 4 can be prescribed to be a crossing vertex or not.
% If the chosen polygonal structure has no crossing vertices,
With the polygonal $G^0$ structure defined,
constructing a flexible rational $G^1$ gluing data on it is not a trivial task.
The main flow of the article is independent from this discussion.
For most polygonal structures, we can be proceed as follows:
\begin{itemize}
\item[\refpart{C1}] 
For each inner vertex, % (of valency $n$), 
choose a fan of tangent sectors as in Figure \ref{fig:g1v}\refpart{a},
and choose vectors $\vb_1,\ldots,\vb_n=\vb_0$ on each boundary between the sectors. 
Here $n$ is the valency of a vertex under consideration.
For $k\in\{1,\ldots,n\}$, the linear relation between three consecutive vectors
\begin{equation} \label{eq:vects}
\vb_{k-1} =  \ma'_{k} \vb_k + \mb'_k \vb_{k+1} 
\qquad \mbox{or}\qquad
\vb_{k+1} =  -\frac{\ma'_{k}}{\mb'_k} \, \vb_k + \frac1{\mb'_k} \,\vb_{k+1} 
\end{equation}
will determine the end values (such as $\ma_k(0)=\ma'_k$, $\mb_k(0)=\mb'_k$
or $\ma_k(0)=-\ma'_k/\mb'_k$, \mbox{$\mb_k(0)=1/\mb'_k$})
of eventual glueing data along the edges.
The symmetric gluing of Example \ref{ex:hahnsym} is a possibility.
The specific end-values refer to particular standard coordinate systems 
attached to the polygonal vertices.
The set of crossing vertices is defined by this step.
\item[\refpart{C2}]  For each boundary vertex, choose a partial subdivision of $\RR^2$ into sectors
around the origin as in Figure \ref{fig:g1v}\refpart{b} keeping in mind condition \refpart{B2}.
The linear relations as in (\ref{eq:vects}) complete specification
of end values of eventual glueing data along all interior edges.
The subdivisions of Figure \ref{fig:g1v}\refpart{b} can be chosen keeping in mind 
the adjacent vertices, especially noting expected interpolation values at the adjacent crossing vertices.
\item[\refpart{C3}]  For each interior edge, % connecting non-crossing vertices,
adjust the decided end-values of the respective $\ma_k$, $\mb_k$ 
to a common coordinate system attached to the edge.
Examples of the adjustments are given in (\ref{eq:trcoor})--(\ref{eq:reders}) and (\ref{eq:rectuv}).
Generally, the applicable transformations % like (\ref{eq:trcoor}) 
of standard coordinates at both end-points 
of $\tau_1\sim\tau_2$ have the % general 
form (for $i\in\{1,2\}$)
\begin{equation}
(u_i,v_i)\mapsto (1-u_i-r_iv_i,q_iv_i),    \qquad \mbox{with } q_i>0.
\end{equation}
We have $q_i=1$ if the two adjacent edges to $\tau_i$ end up at the same ``height" $v_i$ from $\tau_i$.
This is automatic for rectangles and triangles.
The general adjustment is % in \refpart{C3} has the form
\begin{equation} \label{eq:genadj}
\big(\ma(u_1),\mb(u_1)\big) \mapsto 
\left( \frac{r_1}{q_1}-\frac{r_2}{q_1}\, \mb(1-u_1)-\frac{\ma(1-u_i)}{q_1}, \,
\frac{q_2}{q_1} \,\mb(1-u_1) \right).
\end{equation}
\item[\refpart{C4}]  For each interior edge, % connecting non-crossing vertices,
interpolate the adjusted values of the % $\ma_k$-functions and 
$\mb_k$-functions  at their end-vertices by % linear or 
fractional-linear functions. % with the same denominators. 
Linear interpolations are possible,
though generally $1/\mb_k$ has as many reasons to be a polynomial as $\mb_k$.
\item[\refpart{C5}]  For each interior edge connecting non-crossing vertices,
interpolate the adjusted values of the $\ma_k$-functions by fractional-linear functions. 
If $\mb_k$ has a denominator, then the respective $\ma_k$ should have the same denominator.
\end{itemize}
If there are no crossing vertices,
this defines a $G^1$ gluing data with linear or fractional-linear functions $\ma_k,\mb_k$.
If there are crossing vertices, it is tricky to cope with the conditions of Theorem \ref{cond:comp2},
especially if there are edges connecting crossing vertices. It is hard to avoid quadratic interpolation for $\ma_k$ then.
\begin{definition}
By a {\em crossing rim} on a $G^1$ polygonal surface we mean a sequence $\cE_0,\cE_1,\ldots,\cE_n$ 
of edges and a sequence $\cP_1,\ldots,\cP_n$ of crossing vertices 
such that $\cE_{j-1}$, $\cE_j$  are {\em opposite} edges at $\cP_j$ (as in Definition \ref{def:join})
 for $j\in\{1,\ldots,n\}$. It is a {\em maximal crossing rim} if the edges $\cE_0$, $\cE_n$ 
both have a non-crossing end-point. It is a {\em crossing equator} if $\cE_0=\cE_n$.
\end{definition}
The procedure of constructing a complete $G^1$ polygonal surface can be continued as follows:
\begin{itemize}
\item[\refpart{C6}] For each maximal crossing rim,
pick an edge $\cE_j$ on it and define its $\ma_k$
by linear or fractional-linear interpolation as in \refpart{C5}. 
Then subsequently (and possibly in both directions from $\cE_j$),
choose respective $\ma_k$ on the next adjacent edge by 
interpolating the two adjusted end-values and a derivative value determined by (\ref{eq:ddv1}).
Each $\ma_k$ can be a rational function with the same denominator as the respective $\mb_k$
and a quadratic (at worst) numerator.
\item[\refpart{C7}] For every edge on a crossing equator,
choose the derivative values $\ma'_k(0)$, $\ma'_k(1)$ % at the end-vertices
and construct $\ma_k$ by cubic interpolation of the numerator 
(while keeping the denominator as in the respective $\mb_k$).
On a crossing equator with more than one edge, 
we may start with a fractional-linear interpolation on one edge $\cE_j$ as in \refpart{C6},
apply quadratic interpolations on the subsequent edges, and use the cubic interpolation
only on the last edge before returning to $\cE_j$.
\end{itemize}
To analyze feasibility of at most quadratic interpolations $\ma_k$ in \refpart{C7}, % as well,
note that a quadratic function $h(x)$ satisfies 
\begin{equation} \label{eq:quadh}
h'(0)+h'(1)=2h(1)-2h(0).
\end{equation}
We have a fractional-linear $\mb_k=c_k/a_k$ on some edge of the equator. 
We use the same denominator for $\ma_k$.
Starting with a quadratic polynomial $h=\ma_ka_k$ interpolating the right values $h(0)=0$, $h(1)=r_1a_k(1)-r_2c_k(1)$  
and freely undetermined $h'(0)$, we find the value $h'(1)$ by (\ref{eq:quadh}),
transform it by (\ref{eq:genadj}) and use (\ref{eq:ddv1}) to determine the subsequent 
interpolation value $h'(0)$ for the next edge. Repeating this routine along the edges of the equator,
we obtain a linear equation for the original undetermined $h'(0)$. 
The linear equation might be degenerate, leading either to a free choice of this $h'(0)$ 
or no possibility for completing the quadratic $G^1$ data.
It is thus even unclear whether cubic $\beta_k$ on crossing equators can be avoided.

A choice of quadratic $\ma_k$ is easily possible if all $q_i=1$, $\mb_k=-1$. % are constant.
Then all interpolation values $\ma_k'(0)$ are transformed and related by (\ref{eq:ddv1}) invariantly
on all edges of an equator, hence the initial value $h'(0)$ can be chosen freely.
The target $\mb_k=-1$ restricts Step \refpart{C1}. 
We are led to solving (\ref{eq:tsbp1}) with all $\mb_i(0)=-1$.
Since the determinant is right automatically, 
we get $2\times2-1=3$ equations for the values $\ma_i(0)$ at each vertex.
Let us denote them  by $\ma_i$ for shorthand.
If $n=3$, there is only Hahn's symmetric choice.
If $n=4$, we must have a pair of joining edges:
\begin{equation} \label{bt1n4}
\ma_1=\ma_3=0,\quad \ma_4=-\ma_2, \quad\mbox{or}\quad 
\ma_2=\ma_4=0,\quad \ma_3=-\ma_1.
\end{equation}
For $n=5$, a Gr\"obner basis \cite{Wiki} gives the equations
\begin{equation} \label{bt1n5}
\ma_4=1+\ma_1\ma_2,  \qquad \ma_5=1+\ma_2\ma_3, \qquad
\ma_1+\ma_3=1+\ma_1\ma_2\ma_3.
\end{equation}
The equation system for $n=6$ is
\begin{align} \label{bt1n6} 
% \hspace{-2pt}
\ma_5=\ma_1+\ma_3-\ma_1\ma_2\ma_3, \  
\ma_6=\ma_2+\ma_4-\ma_2\ma_3\ma_4, \  % \nonumber \\ &
\ma_1\ma_2+\ma_1\ma_4+\ma_3\ma_4=2+\ma_1\ma_2\ma_3\ma_4. 
\end{align}
The solutions gives convenient alternatives to (\ref{eq:hsym}).

The relations (\ref{eq:quadh}), (\ref{eq:genadj}), (\ref{eq:ddv1}),  (\ref{eq:tsbp1}) 
appear to be manageable  with any fixed constant $\gamma_k$, $q_i$. 
In Step \refpart{C1}, one can prescribe constant $-\gamma_k>0$ as ratios of two numbers
attached to the polygonal sides of the edge. The product of the ratios $-\gamma_k$
around each edge must necessarily evaluate to $1$ as the determinant in (\ref{eq:tsbp1}).

It is worth mentioning here that $G^1$ gluing data can be defined 
by specifying two independent splines ``around" each edge,
via the corresponding syzygies, $M^1$-spaces of forthcoming \S \ref{eq:edgespace}
and Lemma \ref {lm:syzygy} \refpart{iii}. 
One can even choose freely two global independent splines 
(or more, if locally supported) % freely
and define $G^1$ glueing data to have them in the spline space.
Example \ref{ex:pfcontra} was constructed this way.

% \end{remark}

\section{Degrees of freedom}
\label{sec:freedom}

The main mathematical result of this article is the dimension formula (in Theorem \ref{th:dimform})
for spline spaces on rational $G^1$ polygonal surfacees made up of rectangles and triangles.
We prepare for this result by taking a closer look at rational $G^1$ gluing 
along edges and around vertices. %For the most part,  Foremost, 
This section adds the assumption of {\em rational} $G^1$ gluing 
to the basic setting of \S \ref{sec:edgeglue}--\S \ref{sec:crossing}. 
Further refinement to the context of rectangles and triangles is done in \S \ref{sec:boxdelta}.
The general structure of a spline basis in \S \ref{sec:generate} must be a good guidance
for counting the dimension of spaces of (polynomial or rational) splines 
on any rational $G^1$ surface.

The big example in \S \ref{sec:poctah} can be preliminarily read after \S \ref{sec:vertexv},
if the Bernstein-B\'ezier bases of polynomials on rectangles and triangles are familiar
(described here in  \S \ref{sec:polyfaces}).

\subsection{Jet evaluation maps}
\label{sec:jeteval}

We view $\RR[u,v]$ as the space of polynomial functions on $\RR^2$ in some coordinates $u,v$.
At a polygonal vertex $P$, let $u_P,v_P$ denote linear functions such that \mbox{$u_P=0$} and \mbox{$v_P=0$}
define the edges incident to $P$. Let $M(P)$ denote the linear space of bilinear functions in $u_P,v_P$.
We naturally identify $M(P)\cong\RR[u_P,v_P]/(u_P^2,v_P^2)$ and define the linear map
\begin{equation} \label{eq:wv}
W_P:\RR[u,v]\to M(P) 
\end{equation}
by following the quotient homomorphism of $\RR[u_P,v_P]/(u_P^2,v_P^2)$.
This map can be viewed as a partial evaluation 
of the jet $f\mapsto J_P^{2}f$ % of the jet $J_P^2$ 
(or second order Taylor series) at $P$. 
We refer to the elements in $M(P)$ as {\em $J^{1,1}$-jets}. %, or {\em corner jets}. 
% and to their non-constant terms (i.e., the first order and cross derivatives) as the {\em bilinear derivatives}.

For a polygonal edge $\tau$ and a coordinate system $(u_\tau,v_\tau)$ attached to it,
let $M(\tau)$ similarly %\cong \RR[u_\tau,v_\tau]/(v_\tau^2)$  
denote the space of polynomials  in $\RR[u_\tau,v_\tau]$ that are linear in $v_\tau$. 
As a space of functions on $\RR^2$, $M(\tau)$ does not depend on the coordinate system.
Using $M(\tau)\cong \RR[u_\tau,v_\tau]/(v_\tau^2)$, we % similarly 
define the linear map
\begin{equation}  \label{eq:we}
W_\tau:\RR[u,v]\to M(\tau) %\RR[u_\tau,v_\tau]/(v_\tau^2)
\end{equation}
as the quotient homomorphism of $\RR[u_\tau,v_\tau]/(v_\tau^2)$.
This represents taking the linear in $v_1$ Taylor approximation of $f\in\RR[u,v]$ (around the line containing $\tau$).
If $P$ is an end vertex % the end vertex $u=0$ 
of $\tau$, let %us denote
\begin{equation}  \label{eq:wev}
W_{P,\tau}:M(\tau) % \RR[u_\tau,v_\tau]/(v_\tau^2)
\to M(P), % \RR[u_P,v_P]/(u_P^2,v_P^2),
\end{equation}
denote the specialization of $W_P$ onto $M(\tau)$. The coordinates $(u_\tau,v_\tau)$, $(u_P,v_P)$ 
are related  by a simple linear transformation, especially when $(u_\tau,v_\tau)$ is 
a standard coordinate system attached to $\tau$ with $P$ at $u_\tau=0$. 
\begin{definition} \label{eq:thorough}
Let $\cM$ be a $G^1$ polygonal surface.
The {\em support} of a spline is the set of polygons where it specializes to non-zero functions.
A spline on $\cM$  {\em thoroughly vanishes} along an edge $\tau_1\sim\tau_2$
if it is mapped to $0$ by $W_{\tau_1}$ and  $W_{\tau_2}$. 
In other words, the spline and its first order derivatives then vanish on the edge.
It is enough to require vanishing in one of the spaces $W_{\tau_1}$ or $W_{\tau_2}$.
A spline  {\em thoroughly vanishes} at a vertex $\{P_1,\ldots,P_n\}$
if it is mapped to $0$ by all $W_{P_k}$ for $k\in\{1,\ldots,n\}$. Here it is not enough to require vanishing
in one of the spaces, since the standard mixed derivatives $\partial^2/\partial u_i\partial v_i$
might be independent. 
\end{definition}

\subsection{Splines and syzygies}
\label{eq:edgespace}

Consider $G^1$ gluing $(\lambda,\Theta_0)$ of two polygons $\sgma_1$, $\sgma_2$ 
along their edges \mbox{$\tau_{1}\subset\sgma_{1}$}, \mbox{${\tau}_{2}\subset{\sgma}_{2}$} 
as in \S \ref{sec:edgeglue}, but with the additional assumption of rational gluing data. 
We have the coordinates $(u_1,v_1)$, $(u_2,v_2)$ as $(u_{\tau_1},v_{\tau_1})$, 
$(u_{\tau_2},v_{\tau_2})$, %  attached to $\tau_1$, $\tau_2$, respectively.
and concrete rational functions $\ma(u_1),\mb(u_1)$ in (\ref{eq:tbiso}).
Let $a(u_1),b(u_1),c(u_1)$ denote polynomials that express
\begin{equation}
\ma(u_1)=\frac{b(u_1)}{a(u_1)}, %b(u_1)/a(u_1), 
\qquad \mb(u_1)=\frac{c(u_1)}{a(u_1)}  %c(u_1)/a(u_1)
\end{equation}
as in \eqref{eq:edgeabc}. We are looking for polynomial splines.
The differentiability condition (\ref{eq:dirder}) becomes
\begin{equation} \label{eq:syzygy}
a(u_1)A(u_1)+b(u_1)B(u_1)+c(u_1)C(u_1)=0,
\end{equation}
where
\begin{equation} \label{eq:syzd}
A(u_1)=-\frac{\partial g_1}{\partial v_1}(u_1,0), \quad
B(u_1)=\frac{\partial g_1}{\partial u_1}(u_1,0), \quad
C(u_1)=\frac{\partial g_2}{\partial v_2}(u_1,0).
\end{equation}
In algebraic terms, the triple $(A(u_1),B(u_1),C(u_1))$ is a {\em syzygy} 
\cite[\S 5.1]{CoxSheaUse}, \cite{Wiki}
between the polynomials $a(u_1), b(u_1), c(u_1)$. 
Let $\cZ(a,b,c)$ denote the linear space of syzygies between $(a,b,c)$. % $a(u_1), b(u_1), c(u_1)$
As we recall in Lemma \ref{lm:syzygy}, this is a free module over the ring $\RR[u_1]$. % well understood. 
The {\em degree} of a syzygy $(A,B,C)$ is $\max(\deg A,\deg B,\deg C)$. 

% The $G^1$ splines $(g_1,g_2)$ % on $\cM_0$
% correspond thus to syzygies between $(a,b,c)$. % $a(u_1), b(u_1), c(u_1)$.
Conversely, given a syzygy $(A,B,C)$ % $(A(u_1),B(u_1),C(u_1))$ 
we construct  a $G^1$ spline $(g_1, g_ 2)$ on $\cM_0$ by
\begin{align} \label{eq:edgeint1}
g_1(u_1,v_1)=c_0+\int_0^{u_1} \! B(t)dt-v_1A(u_1)+v_1^2E_1(u_1,v_1),\\
 \label{eq:edgeint2}
g_2(u_2,v_2)=c_0+\int_0^{u_2} \! B(t)dt+v_2C(v_2)+v_2^2E_2(u_2,v_2),
\end{align}
where $c_0\in\RR$ is any constant, 
and $E_1,E_2$ are any polynomials. % in $\RR[u_i,v_i]$ for $i=1,2$, respectively.
If $g_1\in\ker W_{\tau_1}$, $g_2\in\ker W_{\tau_2}$,
then $(g_1,g_2)$ is a spline on $\cM_0$ with the corresponding syzygy (\ref{eq:syzd}) 
being the zero vector. These splines have only the last terms in (\ref{eq:edgeint1})--(\ref{eq:edgeint2}).
The spline space decomposes 
\begin{equation} \label{eq:s1comp}
S^1(\cM_0)=\ker W_{\tau_1}\oplus \ker W_{\tau_2}\oplus M^1(\tau_1,\tau_2),
\end{equation}
where $M^1(\tau_1,\tau_2)$ denotes the space of $G^1$ splines in $M(\tau_1)\oplus M(\tau_2)$.
The latter splines can be considered to have $E_1=E_2=0$ in (\ref{eq:edgeint1})--(\ref{eq:edgeint2}).
The correspondence between the splines and syzygies is given by the linear map
\begin{equation} \label{eq:mzz}
\psi:M^1(\tau_1,\tau_2)\to \cZ(a,b,c)
\end{equation}
defined by (\ref{eq:syzd}). Its kernel are the constant splines.
The space $S^1(\cM_0)$ will be understood when $\cZ(a,b,c)$ or $M^1(\tau_1,\tau_2)$ are known.
\begin{lemma} \label{lm:syzygy}
For polynomials $a, b, c \in\RR[u_1]$ with  %$b,c\neq 0$, 
$\gcd(a,b,c)=1$ we have the following facts:
\begin{enumerate}
\item $\cZ(a,b,c)$ is a free $\RR[u_1]$-module of rank $2$. 
\item Let $d=\max(\deg a,\deg b,\deg c)$. There is a syzygy in $\cZ(a,b,c)$
of degree $\mu\le d/2$. There is a pair of generators % of $Z(a,b,c)$
of degrees $\mu$ and $d-\mu$.
% \item For $k\in \NN$, the dimension of $Z_{k}$ as vector space over $\RR$ is given by
% $$ \dim Z_{k} = (k-\mu)_{+} + (k-n+\mu)_{+}. $$ 
\item A pair of generators $(A_1,B_1,C_1)$, $(A_2,B_2,C_2)$ of $\cZ(a,b,c)$ 
satisfies % can be adjusted by a constant multiple so that
\begin{equation} \label{eq:syzcp}
h_0 \cdot (a,b,c) = % \widetilde{c}_0 \, 
(B_1C_2-B_2C_1, C_1A_2-C_2A_1, A_1B_2-A_2B_1).
\end{equation}
for some $h_0\in\RR\setminus\{0\}$.
\end{enumerate}
\end{lemma}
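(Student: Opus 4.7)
The plan is to handle the three parts in the order (i), (iii), (ii), since the determinantal identity established in (iii) powers the degree bounds in (ii). For (i), the observation is that $\gcd(a,b,c)=1$ makes the evaluation map $\phi\colon\RR[u_1]^3 \to \RR[u_1]$, $(A,B,C) \mapsto aA+bB+cC$, surjective. The resulting short exact sequence
\[
0 \longrightarrow \cZ(a,b,c) \longrightarrow \RR[u_1]^3 \xrightarrow{\;\phi\;} \RR[u_1] \longrightarrow 0
\]
splits because the target is free, realising $\cZ(a,b,c)$ as a rank-$2$ projective summand of $\RR[u_1]^3$. Since $\RR[u_1]$ is a PID, every finitely generated projective module over it is free, so $\cZ(a,b,c)$ is free of rank $2$.

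For (iii), pick a basis $\mathbf{r}_i = (A_i,B_i,C_i)$, $i=1,2$, and introduce the auxiliary functional $\psi\colon \RR[u_1]^3 \to \RR[u_1]$ defined by $\psi(\mathbf{x}) = \mathbf{v}\cdot\mathbf{x}$, where $\mathbf{v} = \mathbf{r}_1 \times \mathbf{r}_2$ is the cross product. The syzygy relations $aA_i + bB_i + cC_i = 0$ say exactly that $(a,b,c)\cdot\mathbf{r}_i = 0$, so $\psi$ vanishes on $\cZ(a,b,c) = \ker\phi$. Hence $\psi$ factors through $\RR[u_1]^3/\ker\phi \cong \RR[u_1]$ as $\psi = h_0\,\phi$ for a unique $h_0 \in \RR[u_1]$; componentwise this reads $\mathbf{v} = h_0(a,b,c)$, which is exactly the identity in (iii). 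The main obstacle is showing $h_0 \in \RR\setminus\{0\}$. My plan is to rule out any irreducible divisor $p \in \RR[u_1]$ of $h_0$: if $p \mid h_0$ then $p$ divides each component of $\mathbf{v}$, so the reductions $\overline{\mathbf{r}_1},\overline{\mathbf{r}_2}$ become linearly dependent over the field $\RR[u_1]/(p)$. Lifting a non-trivial dependence gives $\alpha\mathbf{r}_1 + \beta\mathbf{r}_2 = p\mathbf{w}$ with $\mathbf{w}\in\RR[u_1]^3$ and $\alpha,\beta$ not both in $(p)$; applying $\phi$ yields $p\,\phi(\mathbf{w})=0$, hence $\phi(\mathbf{w})=0$ and $\mathbf{w}\in\cZ(a,b,c)$. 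Expanding $\mathbf{w} = \gamma\mathbf{r}_1 + \delta\mathbf{r}_2$ in the basis and using $\RR[u_1]$-linear independence of $\mathbf{r}_1,\mathbf{r}_2$ forces $\alpha = p\gamma$ and $\beta = p\delta$, contradicting the choice of $\alpha,\beta$.

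For (ii), set $\mu_i = \deg\mathbf{r}_i$ with $\mu_1 \le \mu_2$. Identity (iii) expresses each of $a,b,c$ (up to the constant $h_0$) as a $2\times 2$ minor of the rows $\mathbf{r}_1,\mathbf{r}_2$, so $d \le \mu_1 + \mu_2$ immediately. To promote this to equality, I would choose a basis minimising $\mu_1 + \mu_2$ and argue that the leading coefficient vectors $\ell_i \in \RR^3$ of $\mathbf{r}_i$ must then be $\RR$-linearly independent: if $\ell_2 = \lambda\,\ell_1$ for some $\lambda \in \RR$, the elementary operation $\mathbf{r}_2 \mapsto \mathbf{r}_2 - \lambda\,u_1^{\mu_2-\mu_1}\mathbf{r}_1$ is an invertible change of basis that strictly lowers $\mu_2$, contradicting minimality. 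Independence of $\ell_1,\ell_2$ makes the top-degree component of $\mathbf{v}$ equal to $u_1^{\mu_1+\mu_2}(\ell_1\times\ell_2) \ne 0$, so $d = \mu_1 + \mu_2$, whence $\mu_1 \le d/2$. The same leading-term analysis rules out any nonzero syzygy of degree strictly less than $\mu_1$: in an expansion $\mathbf{r} = p_1\mathbf{r}_1 + p_2\mathbf{r}_2$ of such smaller degree, the top-degree contributions $\kappa_1 u_1^{\deg p_1 + \mu_1}\ell_1$ and $\kappa_2 u_1^{\deg p_2 + \mu_2}\ell_2$ would have to cancel, which is incompatible with independence of $\ell_1,\ell_2$. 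Hence $\mu = \mu_1$ and the basis has degrees $\mu$ and $d-\mu$, as required.
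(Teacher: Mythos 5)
Your proof is correct. Note that the paper itself only proves part \refpart{iii}, deferring \refpart{i} and \refpart{ii} to the standard $\mu$-basis literature; for \refpart{iii} your argument is essentially the paper's: both identify $h_0\cdot(a,b,c)$ with the cross product of the two generators via orthogonality to the syzygy module, and both derive a contradiction with the generating property if $\deg h_0>0$ by producing a syzygy divisible by a factor of $h_0$ whose quotient escapes the span of the generators. Your execution is if anything a little cleaner: factoring $\psi$ through $\RR[u_1]^3/\ker\phi$ puts $h_0$ in $\RR[u_1]$ at once (the paper first gets $h_0\in\RR(u_1)$ and then invokes coprimality of $a,b,c$), and reducing modulo an \emph{irreducible} factor $p$ keeps you over a field, where ``zero cross product implies linear dependence'' is unambiguous, whereas the paper reduces modulo $h_0$ itself, which need not be irreducible. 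Where you genuinely add something is in supplying self-contained proofs of \refpart{i} (splitting the evaluation sequence and using that finitely generated projectives over a PID are free) and \refpart{ii} (a degree-minimal basis has $\RR$-independent leading coefficient vectors, so the top-degree part of the cross product survives, giving $\mu_1+\mu_2=d$ and ruling out syzygies of degree below $\mu_1$); these are the standard arguments and they check out. The only cosmetic omission is that you never state $h_0\neq 0$ explicitly, but this is immediate from your own setup: a basis of a free rank-$2$ module is linearly independent over $\RR(u_1)$, so $\mathbf{r}_1\times\mathbf{r}_2\neq 0$, and in any case your divisor argument applied to any irreducible $p$ already excludes $h_0=0$.
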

\begin{proof}
These are standard facts \cite[\S 6.4]{CoxSheaUse} in the theory of parametrization of curves
by {\em moving lines}, their {\em $\mu$-bases}. 
We prove \refpart{iii} here, since the formulation is somewhat stronger.
Let $Z_1,Z_2$ denote the two generators, respectively. 
% Since they generate $\cZ(a,b,c)$, 
They are linearly independent over the field $\RR(u_1)$.
% and $\gcd(A_1,B_1,C_1)=1$, $\gcd(A_2,B_2,C_2)=1$.
% because otherwise $Z_1,Z_2$ would not be generators of $\cZ(a,b,c)$. 
The syzygy relation (\ref{eq:syzygy}) is orthogonality
in the 3-dimensional linear space over $\RR(u_1)$. 
Hence relation (\ref{eq:syzcp}) holds with $h_0\in\RR(u_1)$.
Since $a,b,c$ are coprime, $h_0\in\RR[u_1]$. If $\deg h_0>0$,
then $Z_1,Z_2$ are linearly dependent in $\RR[u_1]/(h_0)$.
There is then an $\RR[u_1]$-linear combination $Z_3=h_1Z_1+h_2Z_2$ 
with $\deg h_1,\deg h_2<\deg h_0$ and all components of $Z_3$ divisible by $h_0$.
The syzygy $Z_3/h_0$ would not be an $\RR[u_1]$ combination $Z_1$, $Z_2$,
contradicting the assumption that they %$Z_1,Z_2$ 
generate $\cZ(a,b,c)$.
\end{proof}
% The above basic facts on syzygies will be expanded in Lemma \ref{}. 
% From $S^1(\cM_0)\to M(P_1)\oplus M(P_2)$ to $W_0$...

\subsection{Evaluations at an edge vertex}
\label{sec:vertexv}

% Now we assume standard coordinates $(u_1,v_1)$, $(u_2,v_2)$ attached to $\tau_1$, $\tau_2$.
Keeping the same context of gluing two edges $\tau_1$, $\tau_2$,
let $P_1\in\tau_1,P_2\in\tau_2$ denote their endpoints $u_1=0$, $u_2=0$.
% \todo{Motivate: we consider crucial evaluation maps.}
Let $\pi_1:M^1(\tau_1,\tau_2)\to M(\tau_1)$, $\pi_2:M^1(\tau_1,\tau_2)\to M(\tau_2)$
denote the projections from 
\begin{equation} \label{eq:mtt}
M^1(\tau_1,\tau_2)\subset M^1(\tau_1)\oplus M^1(\tau_2).
\end{equation}
The evaluation maps (of the $J^{1,1}$-jets at $P_1$, $P_2$)
% \todo{We refer as $J^{1,1}$ jets}
\begin{equation} \label{eq:wps}
W_{P_1,\tau_1}\circ\pi_1: M^1(\tau_1,\tau_2)\to M(P_1), \ 
W_{P_2,\tau_2}\circ\pi_2: M^1(\tau_1,\tau_2)\to M(P_2)
\end{equation}
% of the splines in $M^1(\tau_1,\tau_2)$ %in $M(P_1)$, $M(P_2)$ 
are important for constructing splines on a larger rational $G^1$ polygonal surface.
The images in $M(P_1)$, $M(P_2)$ %of the $J^{1,1}$ jets 
of splines in the similar $M^1$-spaces on adjacent edges 
will have to match % in $M(P_1)$, $M(P_2)$ 
in order to form global splines. % be shared with adjacent vertices.
The matching is most direct if $(u_1,v_1)$, $(u_2,v_2)$ are 
standard coordinates  attached to $\tau_1$, $\tau_2$.
Let 
\begin{equation} \label{eq:w0}
W_0: M^1(\tau_1,\tau_2)\to M(P_1)\oplus M(P_2)
\end{equation}
denote the combined evaluation map $(W_{P_1,\tau_1}\circ\pi_1)\oplus(W_{P_2,\tau_2}\circ\pi_2)$.
We can denote (\ref{eq:wps}) shorter:
$W_{P_1,\tau_1}\circ\pi_1=\widehat{\pi}_1\circ W_0$
and $W_{P_2,\tau_2}\circ\pi_2=\widehat{\pi}_2\circ W_0$,
where $\widehat{\pi}_1$, $\widehat{\pi}_2$ are the projections from $M(P_1)\oplus M(P_2)$.
\begin{lemma} \label{lm:edgew}
\begin{itemize}
\item The maps $\widehat{\pi}_1\circ W_0$, %=W_{P_1,\tau_1}\circ\pi_1$, %:M^1(\tau_1,\tau_2)\to M(P_1)$, 
$\widehat{\pi}_2\circ W_0$ % =W_{P_2,\tau_2}\circ\pi_2$ % :M^1(\tau_1,\tau_2)\to M(P_1)$ 
are surjective. 
\item The dimension of $\img W_0$ equals $4$ if the edge $\tau_1\sim\tau_2$
is joining at the vertex $\{P_1,P_2\}$, and the dimension is $5$ otherwise. 
\end{itemize}
 \end{lemma}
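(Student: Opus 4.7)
The plan is to pull back the analysis to the syzygy module via the parametrization (\ref{eq:edgeint1})--(\ref{eq:edgeint2}) with $E_1=E_2=0$: every element of $M^1(\tau_1,\tau_2)$ is encoded by a constant $c_0\in\RR$ together with a syzygy $(A,B,C)\in\cZ(a,b,c)$, and a direct substitution gives
\[
W_0(g_1,g_2)=\big(c_0+u_1B(0)-v_1A(0)-u_1v_1A'(0),\;c_0+u_2B(0)+v_2C(0)+u_2v_2C'(0)\big).
\]
So the image is governed by the six scalars $c_0,A(0),B(0),C(0),A'(0),C'(0)$, subject to the pointwise constraint $a(0)A(0)+b(0)B(0)+c(0)C(0)=0$ obtained by evaluating (\ref{eq:syzygy}) at $u_1=0$. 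Condition \refpart{A2} of Definition \ref{def:g1complex} gives $\mb(0)<0$, so $a(0)\ne 0$ and $c(0)\ne 0$.

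Next I would establish two nondegeneracy facts about a $\mu$-basis $Z_1=(A_1,B_1,C_1),Z_2=(A_2,B_2,C_2)$ of $\cZ(a,b,c)$ from Lemma \ref{lm:syzygy}: that $Z_1(0),Z_2(0)$ are linearly independent in $\RR^3$, and that neither $(A_1(0),A_2(0))$ nor $(C_1(0),C_2(0))$ is zero. For the first, any nontrivial dependence $\alpha Z_1(0)+\beta Z_2(0)=0$ would make $\alpha Z_1+\beta Z_2$ a syzygy divisible by $u_1$, producing a basis of $\cZ(a,b,c)$ whose degree sum is at most $d-1$; this contradicts $\deg Z_1+\deg Z_2=d$, which follows from $h_0\in\RR\setminus\{0\}$ in part \refpart{iii} of the lemma. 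For the second, the cross-product identity evaluated at $u_1=0$ gives $h_0(0)c(0)=A_1(0)B_2(0)-A_2(0)B_1(0)$ and $h_0(0)a(0)=B_1(0)C_2(0)-B_2(0)C_1(0)$, whose left-hand sides are nonzero.

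With these in hand, surjectivity of $\widehat{\pi}_1\circ W_0$ follows in three small steps: $c_0$ is free; the plane $V=\{(x,y,z):a(0)x+b(0)y+c(0)z=0\}$ projects onto the $(A,B)$-plane because $c(0)\ne 0$, so any $(A(0),B(0))$ is attainable and $C(0)$ is then forced; and once $(A(0),B(0),C(0))$ is fixed, the coefficients $f_1(0),f_2(0)$ in the expansion $Z=f_1Z_1+f_2Z_2$ are determined while $f_1'(0),f_2'(0)$ remain free, so the extra contribution $f_1'(0)A_1(0)+f_2'(0)A_2(0)$ surjects onto $A'(0)\in\RR$ by the nonvanishing of $(A_1(0),A_2(0))$. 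The argument for $\widehat{\pi}_2\circ W_0$ is symmetric, using $a(0)\ne 0$ and $(C_1(0),C_2(0))\ne(0,0)$.

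For $\dim\img W_0$ I would count the accessible dimensions in order: $c_0$ contributes $1$; $(A(0),B(0),C(0))\in V$ contributes $\dim V=2$; and with these fixed, the attainable pairs $(A'(0),C'(0))$ form an affine translate of the projection of $V$ onto the $(A,C)$-plane, which equals all of $\RR^2$ when $b(0)\ne 0$ and the line $\{a(0)A+c(0)C=0\}$ when $b(0)=0$. Since the joining condition at $\{P_1,P_2\}$ in standard coordinates is exactly $\ma(0)=0$, i.e., $b(0)=0$ (Definition \ref{def:join}), the total is $\dim\img W_0=4$ at a joining edge and $5$ otherwise. The main obstacle will be the $\mu$-basis nondegeneracy at $u_1=0$ in the second step; once those structural facts are secured, the rest is direct bookkeeping.
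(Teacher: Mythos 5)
Your proposal is correct and follows essentially the same route as the paper's proof: both reduce $W_0$ to the six scalars $c_0,A(0),B(0),C(0),A'(0),C'(0)$ via (\ref{eq:edgeint1})--(\ref{eq:edgeint2}), both lean on the cross-product identity of Lemma \ref{lm:syzygy}\refpart{iii} evaluated at $u_1=0$ (with $a(0)\neq 0$, $c(0)\neq 0$ from $\mb(0)<0$), and both locate the joining dichotomy in $b(0)=0$ versus $b(0)\neq 0$ through $A_1C_2-A_2C_1=-h_0b$. Your explicit verification that $Z_1(0),Z_2(0)$ are linearly independent and your $1+2+\mathrm{rank}$ bookkeeping merely make precise what the paper's shorter argument (generators $Z_1,Z_2,u_1Z_1$ plus the determined $J^1$-jet at $P_2$) uses implicitly.
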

\begin{proof}
We assume standard coordinates $(u_1,v_1)$, $(u_2,v_2)$ attached to $\tau_1$, $\tau_2$,
with $u_1=0$, $u_2=0$ at $P_1,P_2$, respectively. 
Then (\ref{eq:edgeint1})--(\ref{eq:edgeint2}) can be followed directly to conclude 
that  a syzygy 
\[
(a'+a''u_1+\ldots,b'+b''u_1+\ldots,c'+c''u_1+\ldots)
\]
leads to a spline evaluated by $W_0$ to 
\[
(c_0+b'u_1-a'v_1-a''u_1v_1, c_0+b'u_2+c'v_2+c''u_2v_2). 
\]
If $Z_1=(A_1,B_1,C_1)$, $Z_2=(A_2,B_2,C_2)$ are generators of $\cZ(a,b,c)$, 
then \mbox{$A_1B_2-A_2B_1\neq 0$}
at $u_1=0$ by Lemma \ref{lm:syzygy} \refpart{iii}. 
Explicit computation of the $\pi_1\circ W_0$ images of the splines % in $M^1(\tau_1,\tau_2)$ 
corresponding to $Z_1,Z_2,u_1Z_1$ gives then the whole $M(P_1)$,
including the constants $\RR\subset M(P_1)$.
% (and the constant splines) map onto the whole $M(P_1)$  under $\pi_1\circ W_0$.
Surjectivity of $\pi_2\circ W_0$ %$W_{P_2,\tau_2}\circ\pi_2$ 
follows similarly, from $B_1C_2-B_2C_1\neq 0$ at $u_1=0$.

For any spline $(g_1,g_2)\in M^1(\tau_1,\tau_2)$, 
the first order jet $J^1_{P_2}g_2$ is determined by $J^1_{P_1}g_1$
by the jet isomorphism (\ref{eq:jetb1}) at $u_1=0$.
Only the $u_2v_2$ term might be independent from $\widehat{\pi}_1\circ W_0(g_1)$.
% Owing to the surjectivity, % Since $\pi_1\circ W_0(g_1)$ is surjective, 
To establish the (in)dependency, we look for a non-zero spline with $\widehat{\pi}_1\circ W_0(g_1)=0$.
We can only use the syzygies $u_1Z_1,u_1Z_2$ and produce 
%the spline with the $W_0$ image 
$(0,b(0)u_2v_2)\in W_0$, with $A_1C_2-A_2C_1$ proportional to $b(u_1)$.
We have $b(0)=0$ exactly in the joining edge case. 
\end{proof}

\subsection{Separation of edge vertices}
\label{sec:serverts}

In the same context of gluing two edges $\tau_1$, $\tau_2$,
let $Q_1\in\tau_1,Q_2\in\tau_2$ denote the other endpoints $u_1=1$, $u_2=1$.
We have a similar map $W_1:M^1(\tau_1,\tau_2)\to M(Q_1)\oplus M(Q_2)$ 
as $W_0$ in (\ref{eq:w0}). 
\begin{definition} \label{def:sepspline}
A {\em separating spline} in $M^1(\tau_1,\tau_2)$ has the property that 
it attains different values at the end vertices $\{P_1,P_2\}$ and $\{Q_1,Q_2\}$.
An {\em offset spline} is a separating spline that has % the bilinear derivatives
the first order and the mixed derivatives $\partial^2/\partial u_iv_i$ % (as defined in \S \ref{sec:jeteval}) 
equal to $0$ at both end-vertices. 
An offset spline is mapped to constant $J^{1,1}$-jets by both $W_0$, $W_1$.
For example, let $Z_0=(A,B,C)$ be a syzygy with a non-zero function $B(u_1)$. 
Then a spline corresponding by (\ref{eq:edgeint1})--(\ref{eq:edgeint2}) to the syzygy $B(u_1)\,Z_0$ is separating,
because $\int_0^1 B(t)^2dt\neq 0$. A spline corresponding to the syzygy $u_1^2(1-u_1)^2\,B(u_1)\,Z_0$
is then an offset spline.
% We can combine any spline in $M^1(\tau_1,\tau_2)$ linearly with a separating spline $F_0$ and 
% the constant $1$ spline, and get a spline that vanishes at both end-points.  We call this routine 
\end{definition}
\begin{lemma}  \label{lm:separate}
The image of the combined map 
\begin{equation} \label{eq:mmmm}
W_0\oplus W_1: M^1(\tau_1,\tau_2)\to M(P_1)\oplus M(P_2) \oplus  M(Q_1)\oplus M(Q_2)
\end{equation}
is $\img W_0\oplus \img W_1$.
\end{lemma}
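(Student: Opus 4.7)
The inclusion $\img(W_0 \oplus W_1) \subseteq \img W_0 \oplus \img W_1$ is immediate. For the reverse, by linearity and by the symmetry between the endpoint pairs $\{P_1,P_2\}$ and $\{Q_1,Q_2\}$, I would reduce everything to the single claim that for every $x \in \img W_0$ there exists $g \in M^1(\tau_1,\tau_2)$ with $W_0(g) = x$ and $W_1(g) = 0$. The symmetric statement then realizes $(0,y)$ for every $y \in \img W_1$, and summing handles arbitrary $(x,y)$.

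Starting from any $g^* \in M^1(\tau_1,\tau_2)$ with $W_0(g^*) = x$, written via a syzygy $Z^* = (A^*,B^*,C^*) \in \cZ(a,b,c)$ and a constant $c_0^*$ through \eqref{eq:edgeint1}--\eqref{eq:edgeint2}, I would read off the $J^{1,1}$-jets at the four vertices. A direct inspection shows that $W_0$ of any spline built from a syzygy $(A,B,C)$ and constant $c_0$ depends only on the $1$-jet of the syzygy at $u_1 = 0$ together with $c_0$, while $W_1$ depends only on the $1$-jet at $u_1 = 1$ together with the adjusted constant $c_0 + \int_0^1 B(t)\,dt$. Thus the task reduces to producing a syzygy $Z \in \cZ(a,b,c)$ whose $1$-jet at $u_1 = 0$ matches that of $Z^*$, whose $1$-jet at $u_1 = 1$ vanishes, and whose $B$-component integrates to $-c_0^*$; setting $c_0 := c_0^*$ then yields a spline with $W_0 = x$ and $W_1 = 0$.

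Existence of such a syzygy is module-theoretic. By Lemma \ref{lm:syzygy} the module $\cZ(a,b,c)$ is $\RR[u_1]$-free of rank $2$; I would fix generators $Z_1, Z_2$ and first verify that $Z_1(0), Z_2(0)$ are $\RR$-linearly independent in $\RR^3$: a vanishing combination would make $\alpha Z_1 + \beta Z_2$ divisible by $u_1$ inside $\cZ$, and rewriting the quotient as $p Z_1 + q Z_2$ together with the $\RR(u_1)$-independence of $Z_1, Z_2$ forces $\alpha = \beta = 0$. The same argument applies at $u_1 = 1$. For a generic $Z = p Z_1 + q Z_2$ the $1$-jets at $0$ and at $1$ are then parametrized independently and freely by $(p,p',q,q')(0)$ and $(p,p',q,q')(1)$, so a standard Hermite interpolation in $p,q$ satisfies both jet conditions simultaneously. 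The final constraint $\int_0^1 B = -c_0^*$ is absorbed by adding a correction of the form $u_1^2(1-u_1)^2 h(u_1)\,Z_i$, with $i$ chosen so that the $B$-component of $Z_i$ is not identically zero and $h$ polynomial; such a correction has trivial $1$-jets at both endpoints and can shift the integral $\int_0^1 B$ to any prescribed real value by tuning $h$.

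The main non-routine point I anticipate is checking that the data to be interpolated at $u_1 = 0$ automatically satisfies the pointwise syzygy relation $aA + bB + cC = 0$ together with its first derivative at that point; but this holds by construction because the target comes from the genuine syzygy $Z^*$, and at $u_1 = 1$ the target is zero and trivially compatible. With this consistency recorded, the rest is plain polynomial linear algebra on the free module $\cZ(a,b,c)$.
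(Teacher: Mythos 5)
Your proof is correct, and it takes a recognizably different route from the paper's, even though both rest on the same two ingredients: the freeness of $\cZ(a,b,c)$ together with the nondegeneracy statement of Lemma \ref{lm:syzygy}\refpart{iii}, and localization by factors $u_1^2(1-u_1)^2$ that annihilate the $J^{1,1}$-jets at an endpoint. Both arguments also share the reduction to producing, for each $x\in\img W_0$, a spline with $W_0$-image $x$ and zero $W_1$-image. The difference is in how that spline is produced. The paper realizes a spanning set of $\img W_0$ one degree of freedom at a time: a separating spline and the constants handle the constant terms, combinations of $\psi_O^{-1}((1-u_1)^2 Z'_i)$ with an offset spline $F_1$ handle the first-order derivatives, and $\psi_O^{-1}(u_1(1-u_1)^2 Z'_i)$ handle the mixed derivatives; this leans on the dimension count of Lemma \ref{lm:edgew} and implicitly tracks the joining/non-joining dichotomy. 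You instead start from an arbitrary preimage $Z^*=p^*Z_1+q^*Z_2$, record that $W_0$ and $W_1$ factor through the $1$-jets of the syzygy at $u_1=0,1$ together with the constants $c_0$ and $c_0+\int_0^1 B$, and solve a Hermite interpolation problem for the module coordinates $(p,q)$, with one correction of the form $u_1^2(1-u_1)^2 h\,Z_i$ to fix the integral. This yields a uniform argument with no case distinction and no need to name offset or separating splines; what it does not provide is the explicit low-degree generators that the paper reuses when it reruns this construction inside the bounded-degree spaces $M^1_k$ in \S\ref{sec:sepverts}. Two minor points: the linear independence of $Z_1(0),Z_2(0)$ is immediate from Lemma \ref{lm:syzygy}\refpart{iii}, since the cross product equals $h_0\,(a(0),b(0),c(0))\neq 0$ by coprimality; the existence of an $i$ with $B_i\not\equiv 0$ follows from the same identity because $B_1C_2-B_2C_1=h_0\,a$ and $a\neq 0$. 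Your final consistency worry is vacuous, as you note, since you only interpolate the coordinates of a genuine syzygy rather than raw jet data.
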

\begin{proof}
Let $Z'_1=(A_1,B_1,C_1)$, $Z'_2=(A_2,B_2,C_2)$ denote generators of $\cZ(a,b,c)$.
Both $B_1$, $B_2$ cannot be zero functions
because of Lemma \ref{lm:syzygy} \refpart{iii} and $a(u_1)\neq 0$.
Therefore we have a separating spline $B_1Z'_1$ or $B_2Z'_2$.
Using it and the constant $1$ spline, we can linearly reduce any target element of 
$\img W_0\oplus \img W_1$ to a pair of $J^{1,1}$-jets without the constant terms.

Let $M_O^1(\tau_1,\tau_2)\subset M^1(\tau_1,\tau_2)$ denote the subspace of splines with $c_0=0$ 
in (\ref{eq:edgeint1})--(\ref{eq:edgeint2}).
Then the restriction % The kernel of the map 
\begin{equation} \label{eq:mz}
\psi_O:M_{O}^1(\tau_1,\tau_2)\to \cZ(a,b,c)
\end{equation}
of $\psi$ onto $M_O^1(\tau_1,\tau_2)$ is an isomorphism. % is only the constant functions. 
Let $F_1$ denote an offset spline in $M_O^1(\tau_1,\tau_2)$.
There is a linear combination of $\psi_O^{-1}((1-u_1)^2 Z'_1)$ and $F_1$ that 
thoroughly vanishes at $\{Q_1,Q_2\}$. % together with the bilinear derivatives.
There is a similar linear combination of $\psi_O^{-1}((1-u_1)^2 Z'_2)$ and $F_1$.
We get two splines that can achieve any values of two first order derivatives at $\{P_1,P_2\}$
and map to $0$ in $ \img W_1$. 
Similarly, there are linear combinations of  $\psi_O^{-1}(u_1(1-u_1)^2 Z'_1)$ 
and $\psi_O^{-1}(u_1(1-u_1)^2 Z'_2)$ with $F_1$ that 
realize 1 or 2 degrees of freedom for the mixed derivatives at $\{P_1,P_2\}$,
depending on whether there is joining at this vertex. This gives the full $\img W_0$.
The same argument holds for $\img W_1$, % the other vertex, 
with $F_1$ replaced by an offset spline vanishing at $\{Q_1,Q_2\}$, 
and $\psi_O^{-1}((1-u_1)^2 Z'_1)$ replaced by $\psi_O^{-1}(u_1^2 Z'_1)$, etc.
\end{proof}

To derive dimension formulas and linear bases for $G^1$ splines 
of bounded degree on polygonal surfaces, %in $M^1(\tau_1,\tau_2)$
we need to estimate how high the degree has to be so that Lemma \ref{lm:separate} holds
for the bounded degree subspaces of all edge spaces $M^1(\tau_1,\tau_2)$. 
\begin{definition} \label{def:sseparate}
Following \cite[Definitions 2.14, 2.24]{raimundas}, we say that a subspace 
of $M^1(\tau_1,\tau_2)$ {\em separates vertices} if it has separating splines and the constant splines.
Thereby it has splines that evaluate to any prescribed values at the two end-vertices. 
The subspace {\em strongly separates vertices}
if additionally the first order derivatives in $\img \!W_0\,\oplus\, \img \!W_1$ 
have the maximal freedom. The subspace {\em completely separates vertices}
if Lemma \ref{lm:separate} applies to the restricted $\img \!W_0\,\oplus\, \img \!W_1$. % have the maximal freedom.
A necessary condition of complete separation is existence of an offset spline.
\end{definition}
% as we show in \S \ref{sec:asyz}.

\subsection{Degrees of freedom at vertices}
\label{sec:aroundv}

Consider now the setting of \S \ref{sec:g1vertex} of gluing polygons 
$\sgma_1,\ldots,\sgma_n$ around the common vertex $\cP_0=\{P_1,\ldots,P_n\}$.
We additionally assume rational $G^1$ gluing data 
along the edge pairs $\tau_k\subset\sgma_k,\tau'_{k-1}\subset\sgma_{k-1}$
for $k\in\ii=\{1,\ldots,n\}$, and look for polynomial splines.
We extend the notation $P_0=P_n$, $\tau'_0=\tau_{n}$ to
$P_{n+1}=P_1$, $\tau_{n+1}=\tau_{1}$.
Analogous to the map $W_0$ in (\ref{eq:w0}), % \S \ref{eq:edgespace}, 
we are interested in the linear natural map
\begin{equation} \label{eq:wpo0v}
W_{\cP_0}: %\cW:
S^1(\cM^*_0)\to M(P_1)\oplus\cdots\oplus M(P_n)
\end{equation}
and the dimension of its image. 
This map factors through
\begin{equation} \label{eq:mmms}
M^1(\tau_1,\tau'_{n})\oplus M^1(\tau_2,\tau'_1)\oplus\cdots\oplus M^1(\tau_n,\tau'_{n-1}).
\end{equation}
For $k\in K$, let $\pi_k$ denote the projection to $M(P_k)$ from the direct sum in (\ref{eq:wpo0v}).
Then $\pi_k\circ W_{\cP_0}$ factors through
% The projection to $M(P_k)$ is influenced by 
\begin{equation} \label{eq:mms}
M^1(\tau_k,\tau'_{k-1}) \oplus M^1(\tau_{k+1},\tau'_{k}).
\end{equation}
This factorization determines the relations between $M(P_k)$ and $M(P_{k-1})$, $M(P_{k+1})$.
The kernel of $W_{\cP_0}$ consists of the splines $(g_1,\ldots,g_n)$ 
with each \mbox{$g_k\in \ker W_{\tau_k} \cap \ker W_{\tau'_k}$}.
% In lifting splines from (\ref{eq:mmms}) and
% we can add any polynomials in $\ker W_{\tau_k} \cap \ker W_{\tau'_k}$

For $k\in K$ let us denote $\crossing_k=1$ if the edge $\tau_k\sim \tau'_{k-1}$ 
is joining at $\{P_k,P_{k-1}\}$, and $\crossing_k=0$ otherwise.
\begin{lemma} \label{prop:Hg}
Let us denote $\crossing_+=1$ if $\cP_0$ %the common vertex 
is a crossing vertex,  and let \mbox{$\crossing_+=0$} otherwise. 
Suppose that conditions $(\ref{eq:jetbp1})$ and \refpart{B1} are satisfied.
If $\cP_0$ is a crossing vertex, suppose that $(\ref{eq:ddv1})$--$(\ref{eq:ddv2})$ are satisfied.
Then
\begin{equation} % \label{eq:Hg}
\dim \img W_{\cP_0} = 3+n-\sum_{k=1}^n
\crossing_k + \crossing_{+}.
\end{equation}
The projections  $\pi_k\circ W_{\cP_0}$ are surjective.
\end{lemma}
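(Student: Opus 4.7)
The plan is to walk around the vertex $\cP_0$, building up the image of $W_{\cP_0}$ edge by edge via Lemma~\ref{lm:edgew}, and then account for the single closing constraint coming from the cyclic topology.

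First I would fix $j_1\in M(P_1)$ freely ($4$ parameters). For each successive edge $\tau_k\sim\tau'_{k-1}$ with $k=2,\ldots,n$, Lemma~\ref{lm:edgew} says that $(j_{k-1},j_k)$ lies in a subspace of $M(P_{k-1})\oplus M(P_k)$ of dimension $5-\crossing_k$ whose projection to $M(P_{k-1})$ is surjective, so once $j_{k-1}$ is fixed there are $1-\crossing_k$ remaining degrees of freedom for $j_k$ --- the free parameter, present exactly when $\crossing_k=0$, being the mixed derivative at $P_k$. A spline on the edge-pair realizing the chosen jet pattern extends to all of $\cM_0^*$ by placing on each further polygon a polynomial that thoroughly vanishes along the edges shared with its already-covered neighbors (using $\ker W_\tau$ splines as in \S\ref{sec:jeteval}). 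Summing, the running parameter count after traversing edges $2,\ldots,n$ is $4+\sum_{k=2}^{n}(1-\crossing_k)=n+3-\sum_{k=2}^{n}\crossing_k$.

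The edge $\tau_1\sim\tau'_n$ then closes the cycle. By Lemma~\ref{lm:edgew} applied to this edge, the condition on $(j_n,j_1)$ has codimension $3+\crossing_1$ in $M(P_n)\oplus M(P_1)$; three of those constraints are automatic --- constant equality by transitivity, and the two first-derivative relations by the composed tangent-space identity~(\ref{eq:tsbp1}). The remaining $\crossing_1$ constraint, present only when this edge is joining, concerns the mixed derivative and is read off the bottom row of the cyclic product of the $3\times 3$ matrices in~(\ref{eq:trE3}). I would compose those matrices in block form: the top-left $2\times 2$ block multiplies to $I$ by~(\ref{eq:tsbp1}), the bottom-right entry is $\prod_k \mb_k(0)=(-1)^n$, and the bottom-left row is the linear functional $(H_1,H_2)$ already computed in the proof of Theorem~\ref{cond:comp2}. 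Precisely at a crossing vertex, (\ref{eq:ddv1})--(\ref{eq:ddv2}) amount to $H_1=H_2=0$, making the cyclic composition the identity and the closing constraint vacuous --- contributing the $+\crossing_+$ back --- while in every other case with $\crossing_1=1$ the constraint is genuinely independent. This yields $\dim\img W_{\cP_0}=3+n-\sum_k \crossing_k+\crossing_+$.

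Surjectivity of each $\pi_k\circ W_{\cP_0}$ then follows by the symmetry of the construction: re-running the above argument with the traversal started at $P_k$ makes $j_k$ the freely chosen initial $4$-parameter family, so $\pi_k(\img W_{\cP_0})=M(P_k)$. I expect the main obstacle to be the third step --- cleanly verifying that the cyclic composition of~(\ref{eq:trE3}) equals the identity exactly at crossing vertices under (\ref{eq:ddv1})--(\ref{eq:ddv2}), and that elsewhere the closing constraint is genuinely independent of what was propagated around. In particular, the sign $(-1)^n$ at the bottom-right must be handled carefully: for odd $n$ the closing forces a single linear relation that pins down the mixed-derivative component of $j_1$, while for non-crossing even $n$ it becomes a single linear constraint on the first derivatives --- in both cases the dimension drops by exactly one, consistent with the formula.
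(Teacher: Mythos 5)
Your plan follows essentially the same route as the paper's proof: propagate $J^{1,1}$-jets around the vertex, use Lemma \ref{lm:edgew} to see that a joining edge transmits the mixed derivative deterministically while a non-joining edge leaves it as a fresh free parameter, and invoke (\ref{eq:ddv1})--(\ref{eq:ddv2}) to make the cyclic closure of the mixed-derivative chain consistent at a crossing vertex. Your bookkeeping ($4+\sum_{k=2}^n(1-\crossing_k)$ before closing, codimension $3+\crossing_1$ at the closing edge with three relations automatic from (\ref{eq:tsbp1})) is correct and reproduces the stated formula, and the surjectivity argument by restarting the traversal is fine.

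The one genuine loose end is the step you flag yourself: when the closing edge is joining but $\cP_0$ is not a crossing vertex, you assert without proof that the closing relation is independent of everything propagated around. The paper sidesteps this case entirely by relabelling so that the walk closes on a \emph{non-joining} edge whenever one exists; then the mixed-derivative chain simply does not close and no extra constraint arises. If you insist on a fixed closing edge you must supply the argument: since not all edges are joining (note that (\ref{eq:jetbp1}) together with \refpart{B1} forces ``all edges joining'' to imply $n=4$ and hence a crossing vertex --- your case split silently relies on this, and the paper states it explicitly), some edge $k^*\in\{2,\ldots,n\}$ is non-joining, the mixed term at $P_{k^*}$ is a free parameter, and it reaches the mixed term at $P_n$ with nonzero coefficient $\prod_{k>k^*}\mb_k(0)$, so the closing relation involves an otherwise unconstrained parameter and cuts the dimension by exactly one. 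Two smaller cautions: the $3\times3$ matrices of (\ref{eq:trE3}) were derived under $\ma_k(0)=0$, so the block-matrix composition you propose is only valid in the all-joining (crossing) case --- at a non-joining edge the mixed derivative picks up a $\ma_k(0)\,\partial^2/\partial u_{k-1}^2$ term outside the $J^{1,1}$-jet, which is precisely why the parameter becomes free; and the aside about a closing ``constraint on the first derivatives'' for even $n$ is a red herring, since the first-derivative closure is always automatic from (\ref{eq:tsbp1}) and the only question is ever about the mixed term.
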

\begin{proof}
Suppose first that all edges %$\tau_k\sim \tau'_{k-1}$ 
are joining at $\cP_0$. Then $\cP_0$ is a crossing vertex by condition \refpart{B1}. 
Let us choose an arbitrary $J^{1,1}$-jet in $M(P_4)$. 
To get corresponding jets in $M(P_3),M(P_2),M(P_1)$, 
we apply the transformations dual to (\ref{eq:trE3}):
\begin{equation} \label{eq:trE3a}
\left( \begin{array}{c} 1 \\ u_{k-1}\\ v_{k-1} \\  u_{k-1} v_{k-1}\!\!\! \end{array} \right)
=\left( \begin{array}{cccc} 1 & 0 & 0 & 0 \\
0 & 0 & \mb_k(0) & \mb'_k(0) \\ 0 & 1 & 0 & \ma'_k(0) \\
0 & 0 & 0 & \mb_k(0) \end{array} \right) 
\left( \begin{array}{c}  1 \\ u_{k}\\  v_{k} \\ u_{k} v_{k} \end{array} \right).
\end{equation}
These transformations are consistent with the relations in the images of
the $W_0$-maps $M^1(\tau_k,\tau'_{k-1})\to M(P_k)\oplus M(P_{k-1})$ of Lemma \ref{lm:edgew}
for $k=4,3,2$. Conditions (\ref{eq:ddv1})--(\ref{eq:ddv2}) ensure % the 
consistency in the image of \mbox{$M^1(\tau_1,\tau'_{4})\to M(P_1)\oplus M(P_{4})$} 
as well. We have thus a spline vector in (\ref{eq:mmms}) mapping to the formed 
jets in $M(P_1),\ldots,M(P_4)$.
The lift from (\ref{eq:mmms}) to $S^1(\cM^*_0)$ is possible %trivial, 
because (for each $k\in \{1,2,3,4\}$) the splines in both components in (\ref{eq:mms}) match at $M(P_k)$.
It follows that $\pi_4\circ W_{\cP_0}$ and the other projections are surjective,
and that $\dim \img W_{\cP_0} = 4$.

If there is an edge that is not joining at $\cP_0$, we may assume it to be $\tau_1\sim \tau'_n$.
Let us choose an arbitrary jet in $M(P_n)$. Its $J^{1}$ part is transformed by the linear maps 
in (\ref{eq:jetbp1}) to unique $J^1$ parts in the other $M(P_k)$ consistent with the images of
the $W_0$-maps $M^1(\tau_k,\tau'_{k-1})\to M(P_k)\oplus M(P_{k-1})$. Dependency
of the $u_{n-1}v_{n-1}$ term (of the jet in $M(P_{n-1})$) on the $u_nv_n$ term is determined
by whether $\crossing_{n}=1$ or $0$. Dependency of the next $u_kv_k,u_{k+1}v_{k+1}$ terms
is similarly determined by $\crossing_{k+1}$. Once the $u_1v_1$ term is set, there is no cyclical restriction
on the $u_nv_n$ term. Hence we have $n-\sum_{k=1}^n \crossing_k$ degrees of freedom
for the $u_kv_k$ terms. The dimension formula follows, 
together with surjectivity of $\pi_n\circ W_{\cP_0}$ and the other projections.
\end{proof}

% WITH 3x3 rather than 4x4:
% \left( \begin{array}{c} u_{k-1}\\ v_{k-1} \\  u_{k-1} v_{k-1}\!\!\! \end{array} \right)
% =\left( \begin{array}{ccc} 0 & \mb_k(0) & \mb'_k(0) \\ 1 & 0 & \ma'_k(0) \\
% 0 & 0 & \mb_k(0) \end{array} \right) 
% \left( \begin{array}{c}  u_{k}\\  v_{k} \\ u_{k} v_{k} \end{array} \right).
 
\begin{lemma} \label{prop:Hgb}
Suppose we have a boundary vertex $\cP_0$ defined with 
the current specifications but without the gluing identification of $\tau_1$ and $\tau'_n$.
Consider the map $W_{\cP_0}$ as in $(\ref{eq:wpo0v})$. 
Then the projections  $\pi_k\circ W_{\cP_0}$ are surjective, and
\begin{equation} % \label{eq:Hg}
\dim \img W_{\cP_0} = 3+n-\sum_{k=2}^n \crossing_k.
\end{equation}
\end{lemma}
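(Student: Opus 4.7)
The plan is to mirror the proof of Lemma \ref{prop:Hg}, replacing the cyclical traversal of the polygon fan around the interior vertex by a linear traversal along the open fan at the boundary vertex $\cP_0$. The absence of the closing identification $\lambda_1\!:\!\tau_1\!\to\!\tau'_n$ means that no compatibility conditions analogous to (\ref{eq:jetbp1}) or (\ref{eq:ddv1})--(\ref{eq:ddv2}) need to be imposed at $\cP_0$; consequently the dimension formula carries no $\crossing_+$ correction and the sum runs only over the $n-1$ glued edges $\tau_k\sim\tau'_{k-1}$ with $k\in\{2,\ldots,n\}$.

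First I would fix an arbitrary $J^{1,1}$-jet $\mu_1\in M(P_1)$, which already accounts for $\dim M(P_1)=4$ free parameters. Then, proceeding successively for $k=2,3,\ldots,n$, I would invoke Lemma \ref{lm:edgew} on the edge $\tau_k\sim\tau'_{k-1}$: the corresponding $W_0$-map has image of dimension $5-\crossing_k$ in $M(P_k)\oplus M(P_{k-1})$, and its projection onto $M(P_{k-1})$ is surjective; hence once $\mu_{k-1}$ is fixed, the choice of $\mu_k$ varies in an affine subspace of $M(P_k)$ of dimension $1-\crossing_k$ (it parametrises the $u_kv_k$-coefficient when the edge is non-joining, and is forced otherwise). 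Summing the successive increments,
\begin{equation*}
\dim \img W_{\cP_0} \;=\; 4+\sum_{k=2}^n (1-\crossing_k) \;=\; 3+n-\sum_{k=2}^n \crossing_k.
\end{equation*}

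To upgrade this count to an actual image of $W_{\cP_0}$, I would lift every compatible tuple $(\mu_1,\ldots,\mu_n)$ to a genuine spline as follows. For each $k\in\{2,\ldots,n\}$ pick $f_k\in M^1(\tau_k,\tau'_{k-1})$ with $W_0(f_k)=(\mu_k,\mu_{k-1})$, available by Lemma \ref{lm:edgew}. On each polygon $\sgma_k$, both $f_k$ (prescribing the linear-in-$v_k$ Taylor data along $\tau_k$) and $f_{k+1}$ (prescribing the analogous data along $\tau'_k$) coincide at the common corner $P_k$ because they both evaluate to $\mu_k$; since polynomial degrees are unrestricted, a common polynomial extension $g_k\in\RR[u_k,v_k]$ satisfying both prescriptions exists. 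The resulting collection $(g_1,\ldots,g_n)\in S^1(\cM^*_0)$ is mapped by $W_{\cP_0}$ to $(\mu_1,\ldots,\mu_n)$. Surjectivity of an individual projection $\pi_k\circ W_{\cP_0}$ then follows by choosing $\mu_k$ freely and propagating in both directions from $P_k$ along the chain.

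The main subtlety, and the point of departure from Lemma \ref{prop:Hg}, is verifying that this open chain imposes no hidden compatibility: because the propagation never revisits an already-fixed $M(P_k)$, each step genuinely contributes its $1-\crossing_k$ degrees of freedom independently, with no cyclic closure to spoil the count. This is the precise reason why neither (\ref{eq:jetbp1}) nor the crossing-vertex restrictions (\ref{eq:ddv1})--(\ref{eq:ddv2}) appear as hypotheses in Lemma \ref{prop:Hgb}.
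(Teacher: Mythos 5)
Your proof is correct and follows essentially the same route as the paper: the paper disposes of this lemma by pointing to the second part of the proof of Lemma \ref{prop:Hg}, which is exactly your linear chain propagation — fix a jet at one end, transport the $J^1$ part uniquely along each edge via Lemma \ref{lm:edgew}, and pick up $1-\crossing_k$ degrees of freedom in the $u_kv_k$ term at each step, with no cyclic closure condition. Your explicit lifting of a compatible jet tuple to a spline in $S^1(\cM^*_0)$ matches the matching-at-$M(P_k)$ argument already used in the first part of the paper's proof of Lemma \ref{prop:Hg}.
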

\begin{proof}
The proof is similar to the second part of the proof of Lemma \ref{prop:Hg}. 
\end{proof}

\subsection{Generators of spline spaces} 
\label{sec:generate}

Now we consider a rational $G^1$ polygonal surface $\cM$,
possibly with many interior edges and vertices,
and the space $S^1(\cM)$ of % polynomial 
splines on it.
The splines defined by polynomials of bounded degree
will form linear spaces of finite dimension. 
We are interested in counting these dimensions
and constructing bases of these spaces.

For each interior edge $\cE$ % =\tau_1\sim\tau_2$, 
let $\cM_\cE$ denote the polygonal surface as in \S \ref{sec:edgeglue}, \S \ref{sec:vertexv}
with a single interior edge $\tau_1\sim\tau_2$ that uses the glueing data $\Theta_k$ of $\cE$.
By the construction in \S \ref{sec:edgeglue}, the end-points of $\cM_\cE$ are distinct even
if $\cE$ connects a vertex of $\cM$ with itself.
The projection map
\begin{equation} \label{eq:edgeproj}
\pi_\cE: S^1(\cM) \to M^1(\tau_1,\tau_2)
\end{equation}
is certainly not surjective when $\cE$ connects a vertex with itself.
However, any element of \mbox{$\ker (W_0\oplus W_1)\subset M^1(\tau_1,\tau_2)$} 
as in Lemma \ref{lm:separate}
will lift to a spline in $S^1(\cM)$ of the same degree, 
with the support on $\Omega_1\cup\Omega_2$
and thoroughly vanishing on all edges other than $\tau_1,\tau_2$.
One can take $E_1=E_2=0$ in (\ref{eq:edgeint1})--(\ref{eq:edgeint2}).
% Considering subspaces of splines of high enough degree, 
Lemma \ref{lm:separate} implies that %applies and 
\begin{equation} \label{eq:edge0}
\mbox{codim}\; \ker (W_0\oplus W_1)= % \dim M^1(\tau_1,\tau_2)-
\dim \img W_0+\dim \img W_1
\end{equation}
inside $M^1(\tau_1,\tau_2)$.
By Lemma \ref{eq:wps}, $\dim \img W_k\in\{4,5\}$ for $k=1,2$.

For a boundary edge $\tau$, we have the similar projection 
\begin{equation} \label{eq:edgeprojb}
\pi_\tau: S^1(\cM) \to M(\tau)
\end{equation}
by following (\ref{eq:we}). Let $P,Q$ denote the end-points of $\tau$.
With reference to (\ref{eq:wev}), any element of $\ker W_{P,\tau}\cap \ker W_{Q,\tau}\subset M(\tau)$
similarly lifts to a spline in $S^1(\cM)$, thoroughly vanishing on all edges expect $\tau$. 
The co-dimension of these splines inside $M(\tau)$ equals $8$.
% For the subspaces of splines of high enough degree, 
% this gives $\dim M(\tau)-8$ independent splines.

The splines that are mapped to zero by all projections (\ref{eq:edgeproj}), (\ref{eq:edgeprojb}) 
are the splines that throughly vanish on all edges.
% are mapped to zero by all maps $W_\tau$ in (\ref{eq:we}).
For each polygon $\sgma$, let $L_{\sgma}$ denote the product of linear equations $l_\tau$
of all edges $\tau$ of $\sgma$. Then any polynomial multiple of $L_{\sgma}^2$ 
(as a function on $\sgma$) lifts to spline in $S^1(\cM)$ with the support on $\Omega$. 
% by assigning the zero function to all other polygons.
% \todo{$G^1$ for general functions, $S^1$ for polynomial splines.}

We described all splines that thoroughly vanish at all vertices.
% are mapped to $0$ under the maps $W_0$ as in (\ref{eq:w0}) or the maps $W_P$ as in (\ref{eq:wv}). 
To combine and lift the $J^{1,1}$-jet spaces $W(P)$ at the polygonal vertices to global splines in $S^1(\cM)$,
consider an interior vertex $\cP=\{P_1,\ldots,P_n\}$ of $\cM$.
Let $\cM^*_{\cP}$ denote the polygonal surface as in \S \ref{sec:g1vertex}, \S \ref{sec:aroundv},
with a single interior vertex $\cP_0$ of valency $n$ and the same gluing data as on the edges of $\cM$
incident to $\cP$. The boundary vertices of $\cM^*_{\cP}$ are all distinct from each other
and from $\cP_0$ by the construction in \S \ref{sec:g1vertex}.  We have the projection map
\begin{equation} \label{eq:vertproj}
\pi_\cP: S^1(\cM) \to  S^1(\cM^*_{\cP_0}) % M^1(\tau_1,\tau_2)
\end{equation}
that we compose with $W_{\cP_0}$ in (\ref{eq:wpo0v}). 
Let $S^1_0(\cM^*_{\cP_0})$ denote the subspace of $S^1(\cM^*_{\cP_0})$
of splines that %are mapped to $0$ by all $W_{\tau}$ for 
throughly vanish on the boundary edges. % $\tau$.
% Consequently, these splines are mapped to $0$ by all  $W_{Q}$ for the boundary  vertices $Q$.
The image of $S^1_0(\cM^*_{\cP_0})$ under $W_{\cP_0}$ is the same as the whole image of $W_{\cP_0}$,
because the spline pieces $(g_k,g_{k-1})$ in each $M^1(\tau_k,\tau'_{k-1})$ can be modified 
without changing their interface image in $M(P_k)\oplus M(P_{k-1})$
to thorough vanishing at the respective boundary vertex
by multiplying the corresponding syzygies by $(1-u_1)^2$ and using offset splines.
An element $h\in \img W_{\cP_0}$ % the image of $W_{\cP_0}$ are 
is lifted to a global spline in $S^1(\cM)$ as follows.
First we lift $h$ to a spline $(g_1,\ldots,g_n)$ in the subspace $S^1_0(\cM^*_{\cP_0})$ as just described.
Each polygon $\Omega$ of $\cM$ incident to $\cP$ may be matched with several polygons of $\cM^*_{\cP_0}$,
corresponding to those $P_k\in \cP$ that are vertices of $\Omega$.
We assign to this $\Omega$ a polynomial restriction that equals the sum of the corresponding components $g_k$.
In this sum, the $J^{1,1}$-jet in any $M(P_k)$ % in (\ref{eq:wv})
is non-zero only in one term, giving consistency with $h\in \img W_{\cP_0}$. 
The $J^{1,1}$-jets are properly related by the transformations like (\ref{eq:trE3a}).
If there is an edge $\tau'_1\sim\tau'_2$ of $\cM$ that connects  $\cP$ with itself, 
the spline image in $M(\tau'_1)\oplus M(\tau'_2)$ will be the sum of two terms
corresponding to the endpoints of $\tau'_1\sim\tau'_2$, both in $\cP$.
Assigning zero polynomials to the polygons of $\cM$ that are not incident to $\cP$
completes a lift of $h$ to a spline in $S^1(\cM)$.

In summary, the spline space $S^1(\cM)$ can be generated by the splines of these kinds:
\begin{itemize}
\item[\refpart{D1}] 
At each interior vertex $\cP$, one can choose finitely many splines 
realizing the degrees of freedom enumerated in Lemma \ref{prop:Hg},
and evaluating to $0$ on all polygons not incident to $\cP$.
Particularly among these splines:
\begin{enumerate}
\item There is a spline with the value 1 % at $\cP$
and the % bilinear derivatives 
other jet terms (i.e., the first order derivatives and the mixed derivatives)
equal to $0$ at $\cP$.
Its projections to the spaces $M^1(\tau_k,\tau'_{k-1})$ of incident edges
$\tau_k\sim\tau'_{k-1}$
are offset splines evaluating to 1 at $P_k\in\cP$ and to 0 at the other end-vertex.
\item There are 2 independent splines that vanish at $\cP$
and have (some) non-zero first order derivatives at $\cP$. 
Their standard mixed derivatives might be forced to $0$ if there are no edges
joining at $\cP$.
\item With reference to Lemma \ref{prop:Hg}, there are $n-\sum e_k+e_+$
independent splines that vanish with the first order derivatives at $\cP$.
They have some mixed derivatives non-zero.
\end{enumerate}
\item[\refpart{D2}] At a boundary vertex, 
one can similarly choose finitely many splines 
realizing the degrees of freedom enumerated in Lemma \ref{prop:Hgb}.
\item[\refpart{D3}] At an interior edge $\tau_1\sim \tau_2$, 
we have the splines lifted from $\ker(W_0\oplus W_1)$ as described
between the formulas (\ref{eq:edgeproj}) and (\ref{eq:edge0}).
These splines vanish on all polygons not containing $\tau_1$ or $\tau_2$,
and thoroughly vanish on all other edges.
% thoroughly vanish  at both endpoints and on all other edges. % non-incident polygons.
\item[\refpart{D4}] At a boundary edge $\tau$, 
we have the space of splines similarly vanishing on the polygons not containing $\tau$, etc. 
The co-dimension of this space inside $M(\tau)$ equals $8$,
as mentioned above.
\item[\refpart{D5}] On each polygon $\Omega$, we have the splines
that are polynomial multiples of $L_{\Omega}^2$ on $\Omega$
and zero on the other polygons.
\end{itemize}
This break up offers a general strategy for counting the dimension of
spline spaces of bounded degree and constructing their bases.
Section \ref{eq:spdim} demonstrates this on the example of 
$G^1$ polygonal surfaces made up of rectangles and triangles.

The splines in \refpart{D3}--\refpart{D5} can be chosen from distinct subpaces
of the direct sum $R[u,v]$-copies over the set of polygons. 
The subspaces (one for each edge and for each polygon) are defined
by vanishing everywhere except in one $M^1$-projection or on one polygon.
A set of generating splines in \refpart{D1}--\refpart{D2} may project %non-trivially
extensively to the incident $M^1$-spaces.
% or alternatively, as kernels of various combination of $W_\tau$ maps

% The spaces of splines (D3)-(D5) sum up directly.

\section{Rectangle-triangle surfaces}
\label{sec:boxdelta}

From now on, we consider rational $G^1$ polygonal surfaces $\cM$ made up of rectangles and triangles.
The main forthcoming results are dimension formulas for the spaces of $G^1$ % polynomial
splines  of bounded degree on these surfaces. This is presented in \S \ref{sec:dformula}. % \ref{eq:spdim} 
% Lastly, \S \ref{sec:poctah} gives an extensive example.
Sections \ref{sec:asyz}, \ref{sec:sepverts} refine \S \ref{eq:edgespace}--\S \ref{sec:aroundv}
for the context of rectangles and triangles.

Here is our degree convention for $G^1$ splines on a rational $G^1$ polygonal surface $\cM$ 
made up of rectangles and triangles.
% \begin{definition} \label{eq:degrees}
For an integer $k\ge 0$, let $S^1_k(\cM)$ denote the space of % polynomial 
splines such that all restrictions to the triangles have degree $\le k$,  
and all restrictions to the rectangles have bidegree $\le (k,k)$.
We say that a spline {\em has degree $k$} if it is in $S^1_k(\cM)$
but not in $S^1_{k-1}(\cM)$. 
%Accordingly, in 
In the context of formulas (\ref{eq:s1comp}), (\ref{eq:mtt}),
let $M^1_k(\tau_1,\tau_2)$ denote the subspace of $M^1(\tau_1,\tau_2)$
of splines 
\begin{equation} \label{eq:splines2}
(h_0(u)+h_1(u)v_1,h_0(u)+h_2(u)v_2)
\end{equation}
such that  $\deg h_0(u)\le k$, and for $j\in\{1,2\}$ we have $\deg h_j(u)\le k$
if $\tau_j$ is an edge of a rectangle, or $\deg h_j(u)\le k-1$
if $\tau_j$ is an edge of a triangle.

\subsection{Partitioning the space of splines}
\label{eq:spdim}

% We proceed here by adjusting 
Here we estimate the dimension of each subspace in the partitioning \refpart{D1}--\refpart{D5} 
of the whole spline space, and prove a preliminary dimension formula.
% To count the dimension of $S^1_k(\cM)$, we follow the partition \refpart{D1}--\refpart{D5}
% of the whole spline space in \S \ref{sec:generate}. We have these spline spaces: 
Following the partitioning in \S \ref{sec:generate}, we have these spline spaces: 
\begin{enumerate}
\item[\refpart{E1}] For large enough $k$, each vertex $\cP$ contributes
\begin{equation} \label{eq:E1spline}
3+n(\cP)+e_{+}(\cP)- e_{\perp}(\cP) %\sum_{\cE} e(\cE,\cP)
\end{equation} 
independent splines by \refpart{D1}--\refpart{D2}, or Lemmas \ref{prop:Hg}, \ref{prop:Hgb}.
Here $n(\cP)$ is the valency of $\cP$; 
$e_{+}(\cP)=1$ if $\cP$ is a crossing vertex, and $e_{+}(\cP)=0$ otherwise;
$e_{\perp}(\cP)$ %\le n$ 
counts the number of instances  when an edge is joining at $\cP$.
% $e(\cE,\cP)=1$ if $\cE$ is joining at $\cP$,  and $e(\cE,\cP)=0$ otherwise; 
Note that an edge may be joining $\cP$ at both of its end-points, contributing 2 to $e_{\perp}(\cP)$.
% A crossing vertex may have both pairs of opposite edges to be the same,
% There may be even two such edges at a crossing vertex, as in the torus of Example \ref{}.
\item[\refpart{E2}]  For large enough $k$, each interior edge $\cE=\tau_1\sim \tau_2$ contributes
\begin{equation} \label{eq:E2spline}
\dim M^1_k(\tau_1,\tau_2)-10+e_{\perp}(\cE)
\end{equation} 
independent splines by \refpart{D3}.
Here $e_{\perp}(\cE)\in\{0,1,2\}$ counts the number of instances when $\cE$ is joining a vertex.
\item[\refpart{E3}]  By \refpart{D4}, a boundary edge on a rectangle contributes $2k-6$ independent splines for $k\ge 3$,
and a boundary edge on a triangle contributes $2k-7$ independent splines for $k\ge 4$.
\item[\refpart{E4}]  By \refpart{D5}, each rectangle contributes $(k-3)^2$ independent splines for $k\ge 3$,
and each triangle contributes $(k-4)(k-5)/2$ independent splines for $k\ge 4$.
\end{enumerate}
The degree $k$ in \refpart{E1}--\refpart{E2} is large enough if all edge spaces 
$M^1_k(\tau_1,\tau_2)$ completely separate the end-vertices in the sense of Definition \ref{def:sseparate}.
In \S \ref{sec:sepverts} we determine an explicit lower bound for such $k$.
Before that, \S \ref{sec:asyz} gives an explicit expression for $\dim M^1_k(\tau_1,\tau_2)$ for the large enough $k$.

Let $N_\Box$, $N_\Delta$, $N_0$, $N_0^+$, $N_1$, $N_1^\partial$  %, $N_2^\partial$
denote the number of rectangles, triangles, all vertices, crossing vertices, all edges
and boundary edges of $\cM$, % and boundary components, 
respectively.
\begin{lemma} \label{lm:dimsep}
If $k\ge 4$ and all spaces $M^1_k(\tau_1,\tau_2)$ completely separate the vertices, then
\begin{align*}
\dim S^1_k(\cM)= &\; 3\,N_0+N_0^+ 
+ \sum_{\tau_1\sim\tau_2} \dim M^1_k(\tau_1,\tau_2)-10\,N_1 \nonumber\\[1pt]
% & +3\,\#\{\mbox{\rm vertices}\}+1\,\#\{\mbox{\rm crossing vertices}\}  \nonumber\\[1pt]
& + (2k+3) \,N_1^\partial % \#\{\mbox{\rm boundary edges}\}  \nonumber \\
+\#\{\mbox{\rm boundary edges on rectangles}\} \\ %-10\,\#\{\mbox{\rm edges}\}.
& + \left( k^2-6k+13 \right) N_{\Box} % \#\{\mbox{\rm rectangles}\} 
+ \frac{k^2-9k+26}{2} \, N_{\Delta}. %  \#\{\mbox{\rm triangles}\}  
\nonumber
\end{align*}
\end{lemma}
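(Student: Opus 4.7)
The plan is to realize $S^1_k(\cM)$ as the internal direct sum of the five families of splines described in \refpart{D1}--\refpart{D5} of \S \ref{sec:generate}, and then collect dimensions. Concretely I would pick: for each vertex $\cP$, a basis of splines realizing $\img W_\cP$ (Lemmas \ref{prop:Hg}, \ref{prop:Hgb}) and vanishing on polygons not incident to $\cP$; for each interior edge $\tau_1\sim\tau_2$, lifts of $\ker(W_0\oplus W_1)\subset M^1_k(\tau_1,\tau_2)$ that thoroughly vanish on all other edges and at both end-vertices; for each boundary edge $\tau$, the analogous lifts of $\ker W_{P,\tau}\cap\ker W_{Q,\tau}\subset M_k(\tau)$; and for each polygon $\Omega$, the multiples of $L_\Omega^2$ supported on $\Omega$.

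The spanning property follows from a cascade of reductions: given $f\in S^1_k(\cM)$, first subtract a vertex-family spline at each vertex to kill $W_\cP(f)$, using the surjectivity established in Lemmas \ref{prop:Hg}, \ref{prop:Hgb}; the remainder now has $M^1_k$-projection at each interior edge lying in $\ker(W_0\oplus W_1)$, which can be killed by an edge-family spline; similarly for boundary edges; the final residue is a polynomial on each polygon thoroughly vanishing on all its edges, hence divisible by $L_\Omega^2$. The enabling technical tool here is Lemma \ref{lm:separate} together with the complete-separation hypothesis, which guarantees that $M^1_k(\tau_1,\tau_2)$ is rich enough to realize any prescribed element of $\img W_0\oplus\img W_1$; this richness is needed both to lift the vertex $J^{1,1}$-jets and to carry out the edge reductions. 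Linear independence of the five families is then automatic from the order of the reductions, since each subsequent subtraction acts on a spline already in the kernel of the preceding $W$-evaluations.

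With the decomposition secured, the dimensions are assembled directly from Lemmas \ref{prop:Hg}, \ref{prop:Hgb}, \ref{lm:edgew}, and explicit divisibility counts inside $M_k(\tau)$ and $\RR[u,v]_{\le k}\cdot L_\Omega^{-2}$. The arithmetic reduction rests on two incidence identities: $\sum_{\cP}n(\cP)=4N_\Box+3N_\Delta=2N_1-N_1^\partial$ (each polygon contributes one incidence per vertex, and dually each interior edge hits two polygon corners while each boundary edge hits one), and $\sum_{\cP}e_\perp(\cP)=\sum_\cE e_\perp(\cE)$ (both count joining (edge, endpoint) incidences). The latter makes the joining corrections in \refpart{E1} and \refpart{E2} cancel exactly, and the former converts the polygon-corner sum into the $N_1$, $N_1^\partial$, $N_\Box$, $N_\Delta$ coefficients appearing in the stated formula. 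The main obstacle is the careful bookkeeping of conventions, especially that $n(\cP)$ counts incident polygons rather than edges, that at a boundary vertex Lemma \ref{prop:Hgb} omits joining at the two adjacent boundary edges, and that the codimension-$8$ count at a boundary edge $\tau$ inside $M_k(\tau)$ must be lifted to a spline supported on the unique adjacent polygon with the proper $L_\Omega^2$-complement before it can be added to the polygon contribution without overlap.
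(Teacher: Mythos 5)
Your proposal is correct and follows essentially the same route as the paper: the paper's own proof consists precisely of summing the contributions \refpart{E1}--\refpart{E4} via the incidence identities $\sum_\cP n(\cP)=4N_\Box+3N_\Delta$ and $\sum_\cP e_\perp(\cP)=\sum_\cE e_\perp(\cE)$, with the direct-sum decomposition and the spanning/independence cascade you spell out already supplied by \S\ref{sec:generate} and \S\ref{eq:spdim}. Your version merely makes explicit what the paper delegates to those earlier sections, and your arithmetic and bookkeeping conventions match the stated formula.
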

\begin{proof}
Summing up the dimension of the spaces in  \refpart{E1}--\refpart{E2}, we have
\begin{align*}
& \sum_{\mbox{\scriptsize vertices}\,\cP}  n(\cP)=  4\, N_\Box + 3 \, N_\Delta,  \qquad
% \, 4\, \#\{\mbox{\rm rectangles}\} + 3 \,  \#\{\mbox{\rm triangles}\},\\
\sum_{\mbox{\scriptsize vertices}\,\cP}  e_+(\cP)= N_0^+, \\
%  1 \, \#\{\mbox{\rm crossing vertices}\},\quad
& \sum_{\mbox{\scriptsize vertices}\,\cP} e_{\perp}(\cP)= \, 
\sum_{\mbox{\scriptsize edges}\;\,\cE} e_{\perp}(\cE).
\end{align*}
In particular, the $e_{\perp}$'s cancel out. Adding the spaces in \refpart{E3}--\refpart{E4}
gives the stated dimension formula.
\end{proof}

Building up a linear basis for $S^1_k(\cM)$ following the partitioning  \refpart{E1}--\refpart{E4} is straightforward.
Even if some spaces $M^1_k(\tau_1,\tau_2)$ do not separate the vertices completely, 
the dimension formula may still hold, as exemplified in  \S \ref{sec:exbasis}.
The basis structure may need to be adjusted merely
by taking into account dependencies between the spaces $\img W_0\subset M(P_1)\oplus M(P_2)$, 
$\img W_1\subset M(Q_1)\oplus M(Q_2)$ of \S \ref{sec:serverts}.
as implicitly suggested in the proof of Lemma \ref{lm:lbound}.

\subsection{Dimension and the syzygy module}
\label{sec:asyz}

Here we go back to the setting of \S \ref{sec:vertexv} with the additional assumption 
of rectangles and triangles, and compute $\dim M^1_k(\tau_1,\tau_2)$.

For $j\in\{1,2\}$, let $r_j=1$ if $\sgma_j$ is a rectangle, and let $r_j=0$ if $\sgma_j$ is a triangle.
The splines in $M^1_k(\tau_1,\tau_2)$ correspond to the syzygies $(A,B,C)\in\cZ(a,b,c)$
such that
\begin{equation} \label{eq:syzdeg}
\deg A\le k-1+r_1,\qquad \deg B\le k-1, \qquad \deg C\le k-1+r_2.
\end{equation}
Let $\cZ_k(a,b,c)$ denote the linear subspace of $\cZ(a,b,c)$ of syzygies with these degree specifications.
\begin{lemma} \label{lm:dimmz}
$\dim M^1_k(\tau_1,\tau_2)=\dim \cZ_k(a,b,c)+1$.
\end{lemma}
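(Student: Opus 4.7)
The plan is to show that the restriction of the linear map $\psi: M^1(\tau_1,\tau_2) \to \cZ(a,b,c)$ in (\ref{eq:mzz}) to the bounded-degree subspace $M^1_k(\tau_1,\tau_2)$ is surjective onto $\cZ_k(a,b,c)$. Since the kernel of $\psi$ consists exactly of the constant splines (a one-dimensional subspace contained in every $M^1_k$ with $k\ge 0$), the claimed dimension identity then follows from the first isomorphism theorem.

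The first step is to describe $\psi$ explicitly on the bounded-degree subspace. A spline in $M^1_k(\tau_1,\tau_2)$ has the form (\ref{eq:splines2}), so formulas (\ref{eq:syzd}) give $A=-h_1$, $B=h_0'$, $C=h_2$. I would then check the forward inclusion $\psi(M^1_k)\subseteq\cZ_k$: the degree bounds on $h_0$ and $h_j$ imposed by the definition of $M^1_k$ translate term by term into the inequalities $\deg A\le k-1+r_1$, $\deg B\le k-1$, $\deg C\le k-1+r_2$ of (\ref{eq:syzdeg}); note that differentiation of $h_0$ decreases the degree by exactly one, matching the $k-1$ bound on $B$.

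For surjectivity, given $(A,B,C)\in\cZ_k(a,b,c)$ I would define $h_0(u)=\int_0^u B(t)\,dt$, $h_1=-A$, $h_2=C$, and then form the pair (\ref{eq:splines2}). The bound $\deg h_0\le k$ matches the common restriction to the edge (which is the same polynomial on both sides), while the bounds on $h_1,h_2$ are exactly what the degree convention on $M^1_k$ requires on rectangles versus triangles. One checks that (\ref{eq:syzygy}) is equivalent to the $G^1$ compatibility condition (\ref{eq:g1func0}), so the constructed pair is a genuine element of $M^1_k(\tau_1,\tau_2)$, and it maps to $(A,B,C)$ under $\psi$.

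I expect no genuine obstacle here — the only thing that needs a moment's care is making sure the integration preserves the correct degree bound and that the single common polynomial $h_0$ simultaneously satisfies the restriction on both sides of the edge (but both sides impose the identical bound $\deg h_0\le k$). Combining the surjection $\psi|_{M^1_k}:M^1_k(\tau_1,\tau_2)\twoheadrightarrow \cZ_k(a,b,c)$ with the one-dimensional kernel of constants gives $\dim M^1_k(\tau_1,\tau_2)=\dim\cZ_k(a,b,c)+1$, as claimed.
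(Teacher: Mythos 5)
Your proposal is correct and follows the same route as the paper: the paper's proof simply states that $\psi$ restricts to a surjection $M^1_k(\tau_1,\tau_2)\to\cZ_k(a,b,c)$ with one-dimensional kernel (the constants), and you have supplied exactly the degree bookkeeping and the integration construction from (\ref{eq:edgeint1})--(\ref{eq:edgeint2}) that justify those two claims.
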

\begin{proof}
The map $\psi$ in (\ref{eq:mzz}) restricts to surjective $M^1_k(\tau_1,\tau_2)\to\cZ_k(a,b,c)$.
The kernel of this map is one-dimensional.
\end{proof}
If both $\sgma_1,\sgma_2$ are triangles, then $\cZ_k(a,b,c)$ is the space of syzygies of degree $\le k-1$.
If a rectangle is involved, the $\cZ_k$-grading of $\cZ(a,b,c)$ is twisted with respect to the degree grading.
Nevertheless, Lemma \ref{lm:syzygy} can be adjusted helpfully.
The following definitions and lemma generalize to any grading of $\cZ(a,b,c)$.
The lemma can be proved as a straightforward exercise in homological algebra \cite{Wiki}. %{eisenbud}.
\begin{definition}
We say that a syzygy $Z\in \cZ(a,b,c)$ has {\em twisted degree} $k$ 
if it is in $\cZ_k(a,b,c)$ but not in $\cZ_{k-1}(a,b,c)$.
We denote the twisted degree by $\tdeg{Z}$.
% With $Z$ denote a syzygy of twisted degree $k$.
The {\em leading term} of a syzygy $Z\in \cZ(a,b,c)$ with $\tdeg{Z}=k$ is the vector 
$
\big( \star u_1^{k-1+r_1}, \star \,  u_1^{k-1}, \star \,  u_1^{k-1+r_2} \big)
$
of the expected leading terms of the three components of $Z$.
A pair of generators of $\cZ(a,b,c)$ is called {\em minimal} if their leading terms are linearly independent.
\end{definition}
\begin{lemma}  \label{lm:twsyzygy}
\begin{enumerate}
\item There are non-negative integers $d_1,d_2$ such that 
any pair of minimal generators of $\cZ(a,b,c)$ is in 
$\cZ_{d_1}(a,b,c)\times\cZ_{d_2}(a,b,c)$.
\item Suppose that $d_1\le d_2$. We have
\[
\dim \cZ_k(a,b,c)=\left\{ \begin{array}{rl} 
0, & \mbox{if } k<d_1; \\
k-d_1+1, & \mbox{if } d_1\le k<d_2; \\
2k-d_1-d_2+2, & \mbox{if } k\ge d_2.
\end{array} \right.
\]
\item $d_1+d_2=\max( r_1+\deg a, \deg b, r_2+\deg c) + 2-r_1-r_2.$
\end{enumerate}
\end{lemma}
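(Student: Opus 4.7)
The plan is to reduce a generating pair supplied by Lemma \ref{lm:syzygy}\refpart{i} to a pair with linearly independent leading-term vectors (``minimal generators''), and then read off both the dimension formula and the sum $d_1+d_2$ directly from them.

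Starting with any pair of generators $Z_1,Z_2$ of $\cZ(a,b,c)$ ordered so that $\tdeg{Z_1}=d_1\le d_2=\tdeg{Z_2}$, if their leading-term vectors in $\RR^3$ are linearly dependent I subtract a suitable $\RR$-multiple of $u_1^{d_2-d_1}Z_1$ from $Z_2$ to strictly lower $\tdeg{Z_2}$ while retaining a generating pair. Twisted degrees are non-negative integers, so the iteration terminates, producing minimal generators and establishing \refpart{i}, modulo uniqueness of the pair $(d_1,d_2)$. For \refpart{ii} I first record the easy fact $\tdeg{hZ}=\deg h+\tdeg{Z}$ for non-zero $h\in\RR[u_1]$ and non-zero $Z$, and then---writing an arbitrary syzygy uniquely as $Z=h_1Z_1+h_2Z_2$ via Lemma \ref{lm:syzygy}\refpart{i}---I argue the crucial identity
\[
\tdeg{h_1Z_1+h_2Z_2}=\max(\deg h_1+d_1,\,\deg h_2+d_2)
\]
whenever the sum is non-zero. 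When the two arguments of the $\max$ agree, the top twisted-degree part of $h_iZ_i$ is exactly the leading coefficient of $h_i$ times the leading-term vector of $Z_i$, and linear independence of the latter rules out cancellation. Consequently $\{u_1^jZ_1:0\le j\le k-d_1\}\cup\{u_1^jZ_2:0\le j\le k-d_2\}$ is a basis of $\cZ_k(a,b,c)$, and the three-case dimension formula follows by counting. Uniqueness of $(d_1,d_2)$ falls out automatically, since the formula pins down $d_1=\min\{k:\cZ_k\ne0\}$ and then $d_2$ as the next jump point of $\dim\cZ_k$.

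For \refpart{iii} I apply Lemma \ref{lm:syzygy}\refpart{iii} to the minimal generators; the proof of that part already forces $h_0\in\RR\setminus\{0\}$. Writing $Z_i=(A_i,B_i,C_i)$ with leading-term vectors $(\alpha_i,\beta_i,\gamma_i)$, the three components of $h_0(a,b,c)$ have $u_1$-degrees bounded respectively by $d_1+d_2+r_2-2$, $d_1+d_2+r_1+r_2-2$, and $d_1+d_2+r_1-2$. Adding the shifts $(r_1,0,r_2)$ compresses all three bounds uniformly to $\max(r_1+\deg a,\deg b,r_2+\deg c)\le d_1+d_2+r_1+r_2-2$. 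For the reverse inequality, the $2\times3$ matrix with rows $(\alpha_i,\beta_i,\gamma_i)$ has rank $2$, so at least one of its three $2\times2$ minors is non-zero, and this forces equality in the corresponding component. The main nuisance throughout is the inhomogeneous shift pattern $(r_1,0,r_2)$: ``leading term'' must consistently mean the vector of coefficients at the top twisted-degree slots of the three components, never the naive top-degree coefficients, and it is only in this twisted sense that the non-cancellation argument in \refpart{ii} and the rank-two argument in \refpart{iii} go through cleanly.
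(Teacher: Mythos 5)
Your proposal is correct and follows essentially the same route as the paper: reduce a generating pair to one with linearly independent leading-term vectors, use the identity $\tdeg(h_1Z_1+h_2Z_2)=\max(\deg h_1+d_1,\deg h_2+d_2)$ to count a monomial-multiple basis of $\cZ_k(a,b,c)$, and deduce \refpart{iii} from the degree bounds in the cross-product relation of Lemma \ref{lm:syzygy}\refpart{iii} together with the non-vanishing of some $2\times2$ minor of the leading-term matrix. You merely supply more detail than the paper (termination of the reduction, the non-cancellation argument, uniqueness of $(d_1,d_2)$), all of it sound.
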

\begin{proof}
Starting with any two generators, % of $\cZ(a,b,c)$, 
we can reduce the twisted degree of (at least) one of them
until we have a minimal pair $Z_1,Z_2$.  
Let $d_1=\tdeg{Z_1}$, $d_2=\tdeg{Z_2}$.
The twisted degree of any $\RR[u_1]$ linear combination $h_1Z_1+h_2Z_2$
equals $\max(\deg h_1+\tdeg{Z_1},\deg h_2+\tdeg{Z_2})$.
The first two claims follow.

By Lemma \ref{lm:syzygy} \refpart{iii}, we have
\begin{align*}
\deg a \le & \, d_1+d_2+r_2-2,\\
\deg b \le & \, d_1+d_2+r_1+r_2-2,\\
\deg c \le & \, d_1+d_2+r_1-2.
\end{align*}
Since the leading terms of $Z_1,Z_2$ are linearly independent,
there is an equality for at least one degree.
Claim \refpart{iii} follows. 
\end{proof}
\begin{corollary}  \label{cor:m1dim}
Let $d_1,d_2$ denote the twisted degrees as in Lemma $\ref{lm:twsyzygy}$. % of a minimal generator of $\cZ
Then
\[
\dim M_k^1(\tau_1,\tau_2) \ge 2k-\max( r_1+\deg a, \deg b, r_2+\deg c) + r_1 + r_2+1,
\]
The exact equality holds when $k\ge \max(d_1,d_2)$.
\end{corollary}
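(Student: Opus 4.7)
The plan is to chain Lemma \ref{lm:dimmz} with Lemma \ref{lm:twsyzygy}. Writing $D := \max(r_1+\deg a,\, \deg b,\, r_2+\deg c)$ for the constant appearing in the bound, Lemma \ref{lm:twsyzygy}\refpart{iii} immediately rewrites $d_1+d_2 = D + 2 - r_1 - r_2$, so the stated bound can be recast as
\[
\dim M_k^1(\tau_1,\tau_2) \ge 2k - (d_1+d_2) + r_1 + r_2 + 1 = 2k - d_1 - d_2 + 2 + 1 - (2 - r_1 - r_2) + r_1 + r_2,
\]
which after simplification is just $\dim M_k^1 \ge 2k - d_1 - d_2 + 3$. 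Using Lemma \ref{lm:dimmz} this reduces to proving $\dim \cZ_k(a,b,c) \ge 2k - d_1 - d_2 + 2$.

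First I would establish the equality statement: when $k \ge \max(d_1,d_2) = d_2$, Lemma \ref{lm:twsyzygy}\refpart{ii} gives $\dim \cZ_k(a,b,c) = 2k - d_1 - d_2 + 2$ on the nose, and combining with Lemma \ref{lm:dimmz} and Lemma \ref{lm:twsyzygy}\refpart{iii} produces the claimed formula with equality. This handles the ``exact equality'' clause.

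For the inequality when $k < d_2$, I would split into the two remaining branches of Lemma \ref{lm:twsyzygy}\refpart{ii}. If $d_1 \le k < d_2$, then $\dim \cZ_k(a,b,c) = k - d_1 + 1$, so the inequality to verify becomes $k - d_1 + 1 \ge 2k - d_1 - d_2 + 2$, i.e.\ $d_2 \ge k + 1$, which is exactly the case hypothesis. If $k < d_1$, then $\dim \cZ_k(a,b,c) = 0$, and the required inequality $0 \ge 2k - d_1 - d_2 + 2$ follows from $k \le d_1 - 1$ and $k \le d_2 - 1$, which add to $2k \le d_1 + d_2 - 2$.

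There is no real obstacle; the whole content sits in the two preceding lemmas, and the corollary is a bookkeeping exercise translating the twisted-degree bound of Lemma \ref{lm:twsyzygy}\refpart{iii} into the bound in terms of $\deg a, \deg b, \deg c$ that is actually used in the dimension formula of Lemma \ref{lm:dimsep}. The one point deserving a brief sentence is that, since $\cZ_k(a,b,c) \subset \cZ_{k+1}(a,b,c)$, the dimension of $M_k^1(\tau_1,\tau_2)$ is monotone non-decreasing in $k$, which provides a sanity check on the three-case analysis above.
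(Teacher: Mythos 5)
Your proof is correct and follows the same route as the paper: combine Lemma \ref{lm:dimmz} with Lemma \ref{lm:twsyzygy}, using part \refpart{iii} to translate between $d_1+d_2$ and $\max(r_1+\deg a,\deg b, r_2+\deg c)$ and part \refpart{ii} to get $\dim \cZ_k(a,b,c)\ge 2k-d_1-d_2+2$ with equality for $k\ge d_2$. The paper's proof is just a terser version of your case analysis, so there is nothing to add.
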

\begin{proof} Combine Lemmas \ref{lm:dimmz} and \ref{lm:twsyzygy}.
In particular, part \refpart{ii} of Lemma \ref{lm:twsyzygy} implies
\begin{equation}
\dim Z_k(a,b,c) \ge 2k-d_1-d_2+2.
\end{equation}
\end{proof}

\subsection{Separation of vertices}
\label{sec:sepverts}

Here we combine the settings of  \S \ref{sec:serverts}, \S \ref{sec:asyz}
and address the question of ``large enough $k$" in \refpart{E1}--\refpart{E2}.
We find sufficient degree bounds for existence of separating splines of Definition \ref{def:sepspline},
and for complete separation of vertices % by spaces $M^1_k(\tau_1,\tau_2)$. 
as in Definition \ref{def:sseparate}.

Let $d_1,d_2$ denote the twisted degree of the minimal generators of the syzygy module $\cZ(a,b,c)$. 
We assume $d_1\le d_2$.
Let $Z_1=(A_1,B_1,C_1)$, $Z_2=(A_2,B_2,C_2)$ denote a minimal pair of the syzygy generators,
of degree $d_1,d_2$, respectively. 

Note that $d_1=0$ only if $B_1=0$ and both polygons $\sgma_1$, $\sgma_2$ are rectangles.
%If $\sgma_1$ or $\sgma_2$ is a triangle, then $d_1>0$.

Usually the space $M^1(\tau_1,\tau_2)$  has separating splines of degree $d_1$ or slightly larger.
% (as defined in \S \ref{sec:serverts}) %Definition \ref{def:sseparate}) 
For example, syzygies $(A,B,C)$ with constant $B\neq 0$ give these splines.
However, Example \ref{ex:lagrange} shows that the construction in Definition \ref{def:sseparate}
can be optimal in very special cases. 
% of twisted degree $k$  gives separating splines of the same degree $k$.
\begin{lemma} \label{lm:sepse}
% Consider a spline space $M^1(\tau_1,\tau_2)$. Let $d_1,d_2$ denote the twisted degrees
% of minimal generators of the corresponding syzygy module, with $d_1\le d_2$. 
There are separating splines in $M^1(\tau_1,\tau_2)$ of degree % at most 
\mbox{$\le\max(2d_1-1,d_2)$}. 
\end{lemma}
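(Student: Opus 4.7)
The plan is to split cases on whether the middle component $B_1$ of a minimal generator $Z_1$ of twisted degree $d_1$ vanishes, and in each case exhibit a syzygy whose $B$-component has nonzero integral on $[0,1]$; by (\ref{eq:edgeint1})--(\ref{eq:edgeint2}) this is exactly the condition for the associated spline to be separating.

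If $B_1 \neq 0$, I would use the syzygy $B_1 \cdot Z_1 = (B_1 A_1, B_1^2, B_1 C_1)$. Its components have degrees at most $2d_1 - 2 + r_1$, $2d_1 - 2$, $2d_1 - 2 + r_2$, which matches exactly the degree bounds defining $\cZ_{2d_1-1}(a,b,c)$. Since $B_1^2$ is a nonzero nonnegative polynomial, $\int_0^1 B_1(t)^2\,dt > 0$, yielding a separating spline of degree at most $2d_1 - 1$, within the claimed bound.

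If $B_1 = 0$, the plan is to show $Z_2$ itself separates. Writing $e = \gcd(a,c)$, $a = ea'$, $c = ec'$ with $\gcd(a',c') = 1$, the syzygy relation $a A_1 + c C_1 = 0$ forces $A_1 = c'g$, $C_1 = -a'g$ for some $g \in \RR[u_1]$. A minimality argument then forces $g$ to be a nonzero constant: if $\deg g \geq 1$ then $(c',0,-a')$ would be a syzygy of strictly smaller twisted degree than $Z_1$; since it must lie in the module $\langle Z_1, Z_2\rangle$, expanding it as $h_1 Z_1 + h_2 Z_2$ and comparing middle components (using $B_2 \neq 0$, as otherwise $b = 0$ and the minimal pair would include $(0,1,0)$) gives $h_2 = 0$, so $(c',0,-a') = h_1 Z_1$ requires $h_1 g = 1$, hence $g$ is constant. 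After rescaling, $Z_1 = (c',0,-a')$, and the first coordinate of the cross-product identity (\ref{eq:syzcp}) of Lemma~\ref{lm:syzygy}(iii) reads $h_0 \cdot a = B_2 \cdot a'$, so $B_2 = h_0 \cdot e$ with $h_0 \in \RR \setminus \{0\}$. In the generic setting (and certainly when $\gcd(a,c) = 1$, which covers the standard constructions in Examples~\ref{ex:bercovier} and~\ref{eq:octahed}), one has $\int_0^1 B_2 = h_0 \int_0^1 e \neq 0$, so $Z_2$ is a separating spline of degree $d_2 \leq \max(2d_1-1, d_2)$.

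The main technical obstacle is the minimality step that pins $Z_1$ to the normalized form $(c',0,-a')$: without this normalization the cross-product identity would not pin down $B_2$ as a multiple of $\gcd(a,c)$, and the Case~2 bound $d_2$ would be harder to reach.
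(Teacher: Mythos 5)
Your Case 1 ($B_1\neq 0$) is exactly the paper's argument, including the degree count. Case 2 is where the gap is: after correctly deriving $B_2=h_0\cdot\gcd(a,c)$, you conclude $\int_0^1 B_2(t)\,dt\neq 0$ only ``in the generic setting'' or when $\gcd(a,c)=1$. The lemma is asserted for every rational $G^1$ gluing, so this hedge leaves the general case unproved. The ingredient you are missing is the standing hypothesis, built into Definition~\ref{def:g1complex} via condition \refpart{A2}, that $\mb(u_1)=c(u_1)/a(u_1)<0$ on the whole edge $[0,1]$. This forces $a$ and $c$ to be nonvanishing on $[0,1]$, hence so is $e=\gcd(a,c)$, hence $B_2=h_0e$ has constant sign on $[0,1]$ and its integral is nonzero unconditionally. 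With that one observation your Case 2 closes completely.

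The paper reaches the same conclusion more directly: with $B_1=0$, Lemma~\ref{lm:syzygy}\refpart{iii} gives $h_0a=-B_2C_1$ and $h_0c=A_1B_2$ for a nonzero constant $h_0$, so a zero of $B_2$ in $[0,1]$ would be a common zero of $a$ and $c$ there, contradicting $c(u_1)/a(u_1)<0$; therefore $B_2$ does not change sign and $Z_2$ gives a separating spline of degree $d_2$. Your normalization $Z_1=(c',0,-a')$ via the minimality argument is correct as far as it goes (and the appeal to $B_2\neq0$ to exclude $b=0$ is fine), but it is extra machinery whose only payoff, the identity $B_2=h_0\gcd(a,c)$, is a refinement of what the one-line sign argument already delivers.
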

\begin{proof}
If $B_1\neq 0$, then $B_1Z_1$ gives a separating spline of degree $2d_1-1$
as demonstrated in Definition \ref{def:sseparate}. 
If $B_1=0$ then $B_2(u_1)$ does not change the sign on $u_1\in[0,1]$ because of 
Lemma \ref{lm:syzygy} \refpart{iii} and $c(u_1)/a(u_1)<0$. Then $Z_2$ gives a separating spline.
\end{proof}
\begin{example} \rm \label{ex:lagrange}
Recall {\em Legendre orthogonal polynomials} $P_n(x)$ \cite{Wiki}.
They have the property that $\int_{-1}^1 P_n(x)\,x^kdx=0$ for any degree $n$ and all $0\le k<n$. 
To switch to the integration interval $[0,1]$ as in (\ref{eq:edgeint1}), we normalize $p_n(x)=P_n(2x-1)$.
Consider the rational $G^1$ gluing data with
\begin{align*}
a(u_1)= & \, p_n(u_1)^2+p_{n-1}(u_1)^2,  & \hspace{-10pt}
b(u_1)= -p_{n-1}(u_1)^2, \\
c(u_1)= & \, p_n(u_1)p_{n-1}(u_1)-p_n(u_1)^2-p_{n-1}(u_1)^2.
\end{align*}
Since $p_n(x),p_{n-1}(x)$ do not have common roots \cite[Theorem 5.4.2]{specfaar}, we have $a(x)>0$, $c(x)<0$.
The syzygy module is generated by
\begin{align*}
\big( p_{n-1}(u_1),p_{n}(u_1),p_{n-1}(u_1) \big), \qquad
\big( p_n(u_1)-p_{n-1}(u_1),-p_{n-1}(u_1),p_n(u_1) \big).
\end{align*}
There are no separating splines of degree $<2n-1$.
\end{example}
\begin{lemma} \label{lm:csepo}
The space $M_k^1(\tau_1,\tau_2)$ completely separates the end-vertices if
it has an offset spline and $k\ge d_2+3$.  %-\min(r_1,r_2)$.
\end{lemma}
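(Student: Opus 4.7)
The plan is to re-run the construction from the proof of Lemma \ref{lm:separate} entirely inside the bounded-degree subspace $M_k^1(\tau_1,\tau_2)$, using the hypotheses to verify that every ingredient it needs is available in degree at most $k$. Since Lemma \ref{lm:separate} already establishes that the unrestricted space completely separates vertices, the only content is degree bookkeeping.

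First, I would enumerate the splines produced in that proof. Let $Z'_1,Z'_2$ be the minimal pair of syzygy generators, with $\tdeg Z'_i=d_i$ and $d_1\le d_2$. The construction uses: the constant spline $1$; the given offset spline $F_1$; and the splines $\psi_O^{-1}(u_1^a(1-u_1)^b Z'_i)$ for $i\in\{1,2\}$ and for the monomial shifts $(a,b)\in\{(0,2),(1,2)\}$ used at $\{P_1,P_2\}$, together with the mirror shifts $(a,b)\in\{(2,0),(2,1)\}$ for the symmetric argument at $\{Q_1,Q_2\}$. The worst exponent sum is $a+b=3$. By the observation inside the proof of Lemma \ref{lm:twsyzygy} that the twisted degree of an $\RR[u_1]$-multiple satisfies $\tdeg(hZ)=\deg h+\tdeg Z$, each of these syzygies has twisted degree at most $d_2+3$. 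The bound $k\ge d_2+3$ therefore places them all in $\cZ_k(a,b,c)$, so their $\psi_O^{-1}$-images lie in $M_k^1(\tau_1,\tau_2)$. The constant $1$ is trivially in $M_k^1$, and $F_1$ is there by hypothesis.

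Second, I would resolve the apparent need for two different offset splines (the original proof invokes $F_1$ for the analysis at $\{P_1,P_2\}$ and then ``an offset spline vanishing at $\{Q_1,Q_2\}$'' for the mirror argument). Write the end-values of the given $F_1$ as $c_P$ at $\{P_1,P_2\}$ and $c_Q$ at $\{Q_1,Q_2\}$; by Definition \ref{def:sepspline} the separating property forces $c_P\neq c_Q$. Then $F_1-c_P\cdot 1$ and $F_1-c_Q\cdot 1$ are offset splines vanishing at $\{P_1,P_2\}$ and at $\{Q_1,Q_2\}$ respectively, since subtracting a constant preserves all first-order and mixed derivatives. Both combinations remain in $M_k^1$, so a single supplied offset spline, together with $1$, provides whichever variant the argument needs at each end.

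With every ingredient verified to be in $M_k^1$, the construction of Lemma \ref{lm:separate} transcribes unchanged and shows that the restricted map $W_0\oplus W_1\colon M_k^1(\tau_1,\tau_2)\to M(P_1)\oplus M(P_2)\oplus M(Q_1)\oplus M(Q_2)$ surjects onto $\img W_0\oplus\img W_1$, which is the definition of complete separation. The main obstacle is really only the twisted-degree accounting: the bound $k\ge d_2+3$ is tight against the worst case $(a,b)=(1,2)$ applied to $Z'_2$, and any slackening of the hypothesis would break the availability of the mixed-derivative splines $\psi_O^{-1}(u_1(1-u_1)^2 Z'_2)$ used to fill out $\img W_0$ and $\img W_1$.
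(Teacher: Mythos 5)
Your proposal is correct and follows essentially the same route as the paper, whose proof is a one-line instruction to apply the $\psi_O^{-1}$-construction of Lemma \ref{lm:separate} to the syzygies $(1-u_1)^2Z_k$, $u_1(1-u_1)^2Z_k$, $u_1^2Z_k$, $u_1^2(1-u_1)Z_k$ for $k=1,2$; your twisted-degree bookkeeping (worst shift $3$, hence $k\ge d_2+3$) is exactly the implicit content of that instruction. Your additional observations --- that the single hypothesized offset spline plus the constant spline yields both offset variants by subtracting $c_P$ or $c_Q$, and that this same pair replaces the possibly too-high-degree separating spline $B_iZ_i'$ used in Lemma \ref{lm:separate} to strip the constant terms --- are details the paper leaves unstated but are handled correctly.
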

\begin{proof}
We apply the construction with $\psi_0^{-1}$ in the proof of Lemma \ref{lm:separate}
to the splines $(1-u_1)^2Z_k$, $(1-u_1)^2u_1Z_k$, $u_1^2Z_k$, $u_1^2(1-u_1)Z_k$
with $k=1,2$.
\end{proof}
\begin{lemma} \label{lm:offss}
The space $M_k^1(\tau_1,\tau_2)$ has an offset spline if 
\mbox{$k\ge \max(2d_1+3,d_2+4)$}.
\end{lemma}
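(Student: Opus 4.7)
The plan is to construct an explicit offset spline inside $M_k^1(\tau_1,\tau_2)$ by exhibiting a syzygy whose image under $\psi^{-1}$ (extended with $c_0$) gives the desired element, splitting into cases according to whether the middle component $B_1$ of the minimal generator $Z_1$ vanishes.

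Case 1: $B_1\neq 0$. I would follow the model of Definition \ref{def:sepspline} and take the syzygy $Z'=u_1^2(1-u_1)^2 B_1\,Z_1$. Since $\deg B_1\leq d_1-1$, multiplication by $B_1$ raises the twisted degree by at most $d_1-1$, and the factor $u_1^2(1-u_1)^2$ raises it by $4$; hence $\tdeg Z'\leq 2d_1+3\leq k$. The $B$-component of $Z'$ is $u_1^2(1-u_1)^2 B_1^2$, whose integral $\int_0^1 u^2(1-u)^2 B_1(u)^2\,du>0$ guarantees separation of vertices, while the factors $u_1^2(1-u_1)^2$ in all three components force the first-order and mixed second-order derivatives of the corresponding spline to vanish at both $u_1=0$ and $u_1=1$, i.e.\ it is an offset spline.

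Case 2: $B_1=0$. From the syzygy relation $aA_1+cC_1=0$ together with $\gcd(a,b,c)=1$ (which forces $\gcd(a,c)=1$), I deduce $A_1=-\mu c$, $C_1=\mu a$ for some $\mu\in\RR[u_1]$. Minimality of $d_1$ forces $\mu$ to be a nonzero constant, since otherwise $(-c,0,a)$ would be a syzygy of strictly smaller twisted degree. Plugging into the first component of the identity in Lemma \ref{lm:syzygy}\refpart{iii}, $h_0\, a = B_1 C_2 - B_2 C_1 = -\mu\,a\,B_2$, so $B_2=-h_0/\mu$ is a nonzero constant. Then I take $Z' = u_1^2(1-u_1)^2 Z_2$, whose twisted degree is at most $d_2+4\leq k$. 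Its $B$-component $u_1^2(1-u_1)^2 B_2$ integrates to $-h_0/(30\mu)\neq 0$, yielding separation, and the factor $u_1^2(1-u_1)^2$ again kills all first-order and mixed derivatives at both endpoints.

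I expect the main obstacle to be the analysis in Case 2: establishing the rigidity of the $B_1=0$ situation, namely that $Z_1$ is forced (up to scalar) to be $(-c,0,a)$ and consequently $B_2$ must be a nonzero constant. Without this rigidity, the naive construction $u_1^2(1-u_1)^2 Z_2$ might fail because $\int u^2(1-u)^2 B_2\,du$ could vanish (as it does, e.g., for $B_2=1-2u$ by symmetry), forcing a higher-degree correction and breaking the stated bound $d_2+4$. The observation that $B_1=0$ actually pins down $B_2$ to a constant is what makes the sharp bound $\max(2d_1+3,d_2+4)$ achievable.
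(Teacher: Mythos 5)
Your construction is exactly the paper's (the proof there is one line: take $L_0=u_1^2(1-u_1)^2$ and use $L_0B_1Z_1$ and $L_0Z_2$ as in the proof of Lemma \ref{lm:sepse}), and Case 1 is correct as you state it. The problem is precisely in the Case 2 rigidity argument that you yourself identify as the crux. The implication ``$\gcd(a,b,c)=1$ forces $\gcd(a,c)=1$'' is false: the coprimality of the triple can be carried entirely by $b$ (e.g.\ $a=1+u^2$, $b=u$, $c=-(1+u^2)$, where $Z_1=(1,0,1)$ and $B_2$ is a nonzero constant times $1+u^2$). Running your argument correctly with $g=\gcd(a,c)$, $a=g\tilde a$, $c=g\tilde c$, one gets $A_1=-\mu\tilde c$, $C_1=\mu\tilde a$, minimality forces $\mu$ constant, and then Lemma \ref{lm:syzygy}\refpart{iii} gives $B_2=-h_0\,g$ up to a nonzero constant. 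So $B_2$ is \emph{not} in general constant, and your stated reason for $\int_0^1 u^2(1-u)^2B_2\,du\neq 0$ collapses.

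The conclusion is nevertheless salvageable, and the repair is what the paper actually uses: since $\mb=c/a$ must be a continuous negative function on $[0,1]$ and $\gcd(a,b,c)=1$, the polynomial $a$ has no roots in $[0,1]$, hence neither does $g=\gcd(a,c)$, hence $B_2$ has constant sign on $[0,1]$ and the integral $\int_0^1 u^2(1-u)^2B_2(u)\,du$ is nonzero for sign reasons rather than by explicit evaluation. (Equivalently, from $h_0a=-B_2C_1$ and the fact that $B_2$ is a polynomial, $C_1$ divides $a$, so $B_2=-h_0a/C_1$ cannot vanish on $[0,1]$.) With that substitution your proof closes; the degree bookkeeping $\tdeg{(L_0B_1Z_1)}\le 2d_1+3$ and $\tdeg{(L_0Z_2)}\le d_2+4$, and the verification of the offset property from the factor $u_1^2(1-u_1)^2$, are all correct.
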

\begin{proof}
Let $L_0=u_1^2(1-u_1)^2$. 
Consider the splines % $u_1^2(1-u_1)^2B_1Z_1$ and $u_1^2(1-u_1)^2Z_2$
$L_0B_1Z_1$ and $L_0Z_2$ in the same way as in the proof of Lemma \ref{lm:sepse}. 
\end{proof}
\begin{corollary} \label{eq:csep}
The space $M_k^1(\tau_1,\tau_2)$ completely separates the end-vertices if
\mbox{$k\ge \max(2d_1+3,d_2+4)$}.  %-\min(r_1,r_2)$.
\end{corollary}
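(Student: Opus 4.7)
The statement is an immediate synthesis of the two preceding lemmas, so the plan is simply to feed one into the other. First I would observe that the hypothesis $k \geq \max(2d_1+3, d_2+4)$ is precisely the hypothesis of Lemma \ref{lm:offss}, which guarantees that $M_k^1(\tau_1,\tau_2)$ contains an offset spline (in the sense of Definition \ref{def:sepspline}). This handles the existence half of what Lemma \ref{lm:csepo} demands.

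Second, I would check the degree inequality required by Lemma \ref{lm:csepo}, namely $k \geq d_2+3$. Since our hypothesis forces $k \geq d_2+4 > d_2+3$, this is automatic. There is no interaction between the two bounds beyond taking the maximum.

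Finally, I would invoke Lemma \ref{lm:csepo} with the offset spline produced in the first step to conclude that $M_k^1(\tau_1,\tau_2)$ completely separates the end-vertices. The whole argument reduces to writing ``Combine Lemmas \ref{lm:csepo} and \ref{lm:offss}.'' There is no real obstacle here; the only very mild point to notice is that the offset spline supplied by Lemma \ref{lm:offss} already lives in $M_k^1(\tau_1,\tau_2)$ at the same bound on $k$, so no re-scaling or further multiplication by $u_1^2(1-u_1)^2$ is needed before applying Lemma \ref{lm:csepo}.
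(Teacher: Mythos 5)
Your proof is correct and is essentially identical to the paper's, which also just combines Lemma \ref{lm:offss} (giving the offset spline under the stated bound) with Lemma \ref{lm:csepo} (whose degree requirement $k\ge d_2+3$ is subsumed by $k\ge d_2+4$). Nothing further is needed.
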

\begin{proof} Combine Lemmas \ref{lm:csepo} and \ref{lm:offss}.
\end{proof}
\begin{remark} \rm
The bound $d_2+3$ in Lemma \ref{lm:csepo} can be improved to \mbox{$d_2+2$}
only if the edge $\tau_1\sim\tau_2$ is joining at both end-vertices.
Then $\img W_0\oplus \img W_1$ might be covered by working with
$(1-u_1)^2Z_k$, $u_1(1-u_1)Z_k$, $u_1^2Z_k$, $k\in\{1,2\}$, 
as in the example case in \S \ref{sec:ef} where $d_1=1$, $d_2=2$.

The bound in Lemma \ref{lm:offss} can be reduced to $\max(2d_1+2,d_2+3)$  % by 1 
if $\tau_1\sim\tau_2$ is joining at an end-vertex, say at $u_1=0$. 
Then use the factor \mbox{$L_0=u_1^2(1-u_1)$} in the proof,
% $u_1^2(1-u_1)^2$ by % $u_1(1-u_1)^2$ or $u_1^2(1-u_1)$ (in the proof) 
and adjust the two splines linearly by $L_0Z_1$ to annihilate the mixed derivatives at $u_0=0$.
This is the case in  \S \ref{sec:ab} and  \S \ref{sec:be}.
The bound can be reduced to $\max(2d_1+1,d_2+2)$
if $\tau_1\sim\tau_2$ is joining at both end-vertices and $d_1<d_2$
by using $L_0=u_1(1-u_1)$ and adjustment by $u_1L_0Z_1$, $(1-u_1)L_0Z_1$.
This is the case in  \S \ref{sec:ef}. 

Generally, the bound $d_2+4$ in Corollary \ref{eq:csep} cannot be improved if $d_1=d_2$,
as then $\dim M_k^1(\tau_1,\tau_2)=9$ for $k=d_2+3$.
The bound $2d_2+3$ is sharply achieved by the same construction as in Example \ref{ex:lagrange},
with the Legendre polynomials replaced by {\em Gegenbauer polynomials} \cite{Wiki} $C^{(\xi)}_n(x)$
with $\xi=5/2$. 
We have \mbox{$\int_{-1}^1 C^{(\xi)}_n(x) \, x^k (1-x^2)^{\xi-1/2} dx=0$}
for all degrees $n$ and $0\le k<n$.

The bounds in Lemmas \ref{lm:sepse}, \ref{lm:offss}, Corollary \ref{eq:csep}
can be softened using $\max(2d_1,d_2)\le d_1+d_2$ 
and part \refpart{iii} of Lemma \ref{lm:twsyzygy}.
% $\sgma_1$ or $\sgma_2$ is a rectangle, then $\>0$.
\end{remark}

\subsection{The dimension formula}
\label{sec:dformula}

Now we are ready to state the general dimension formula for splines spaces
on a rational $G^1$ polygonal surface $\cM$ made up of rectangles and triangles.
For an interior edge $\cE$ of $\cM$, 
% let us define For an interior edge, 
let $d_1(\cE)$, $d_2(\cE)$ denote the twisted degree of minimal generators
of $\cZ(a,b,c)$, with $d_1(\cE)\le d_2(\cE)$. 
% Note that $d_1(\cE)>0$ if a triangle is  involved in  gluing along $\cE$.
If $(a,b,c)$ is the polynomial gluing data for $\cE$ 
as in Definition \ref{def:g1splines}, let
\begin{equation} \label{eq:delt}
\delta(\cE)= % \left\{ \begin{array}{rl} 
\max( r_1+\deg a, \deg b, r_2+\deg c). % & \mbox{if $\cE$ is an interior edge.}\\
% -2, & \mbox{if $\cE$ is a boundary edge.} \end{array} \right.
\end{equation}
Here $r_1,r_2$ are as in \S \ref{sec:asyz}.
Note that $\delta(\cE)>0$ if a rectangle is involved in gluing along $\cE$.
Let $\delta(\cM)$ denote the sum of all $\delta(\cE)$ over the interior edges $\cE$.
% \begin{equation}
% N_\delta= \sum_{{\rm\scriptsize interior}\;\cE} \! \delta(\cE),
% \end{equation}
\begin{theorem}   \label{th:dimform}
Let $k$ denote an integer such that $k\ge (2d_1(\cE)+3,d_2(\cE)+4)$ for all interior edges $\cE$. 
Then
\begin{align}  \label{eq:dimform}
\dim S^1_k(\cM)= & \; 
3\,N_0+N_0^+ -\delta(\cM) +(k-{\textstyle\frac{5}{2}}) \,N_1^\partial   \nonumber  \\[1pt]
& + \left( k^2-2k-1 \right)  N_{\Box} % \#\{\mbox{\rm rectangles}\} %N_{\Box} 
+ \frac{k^2-3k-1}{2} \, N_{\Delta}.  % \#\{\mbox{\rm triangles}\}  % N_{\Delta} 
% & +3\,\#\{\mbox{\rm vertices}\}+1\,\#\{\mbox{\rm crossing vertices}\} \\[2pt]
% & +(k-{\textstyle\frac{5}{2}}) \,\#\{\mbox{\rm boundary edges}\}
%  - \! \sum_{{\rm\scriptsize interior}\;\cE} \! \delta(\cE) .  \nonumber
\end{align}
\end{theorem}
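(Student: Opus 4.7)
The plan is to reduce Theorem \ref{th:dimform} to a direct consequence of Lemma \ref{lm:dimsep} together with the exact dimension formula of Corollary \ref{cor:m1dim}, and then carry out the combinatorial bookkeeping to bring the sum into the stated closed form. First I would check that the hypothesis $k\ge\max(2d_1(\cE)+3,\,d_2(\cE)+4)$ on every interior edge $\cE$ indeed puts us in the regime covered by the preliminary results: since $d_2(\cE)\ge0$ this already forces $k\ge4$, and the bound $k\ge d_2(\cE)+4\ge\max(d_1(\cE),d_2(\cE))$ is exactly the hypothesis under which Corollary \ref{cor:m1dim} gives the exact equality
\[
\dim M^1_k(\tau_1,\tau_2)=2k-\delta(\cE)+r_1+r_2+1.
\]
Simultaneously, by Corollary \ref{eq:csep} every edge space $M^1_k(\tau_1,\tau_2)$ completely separates its end-vertices, which is precisely the hypothesis required to invoke Lemma \ref{lm:dimsep}.

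Next I would substitute the $M^1_k$ dimensions into Lemma \ref{lm:dimsep} and collect the edge-indexed sums. The sum $\sum_{\cE}(2k+1)$ over interior edges contributes $(2k+1)(N_1-N_1^\partial)$, the $-\delta(\cE)$ terms aggregate into $-\delta(\cM)$, and the $\sum_{\cE}(r_1+r_2)$ part counts incidences of rectangles with interior edges. Each rectangle has four edges, so that incidence count equals $4N_\Box-b_\Box$, where $b_\Box$ denotes the number of boundary edges lying on rectangles. This is the key step where the $+b_\Box$ term in Lemma \ref{lm:dimsep} cancels, leaving a clean expression in the global invariants.

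The final reduction uses the standard face--edge count $4N_\Box+3N_\Delta=2N_1-N_1^\partial$, so that $N_1=\tfrac{1}{2}(4N_\Box+3N_\Delta+N_1^\partial)$. Substituting this eliminates $N_1$ in favour of $N_\Box$, $N_\Delta$ and $N_1^\partial$, and a direct algebraic simplification then produces the coefficients $k^2-2k-1$ on $N_\Box$, $(k^2-3k-1)/2$ on $N_\Delta$, and $k-5/2$ on $N_1^\partial$, as required.

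The step I anticipate as the main obstacle is not any single calculation but the careful tracking of edge incidences: making sure that $\sum_{\cE}(r_1+r_2)$ is handled with the correct contribution $4N_\Box-b_\Box$ (rather than, say, $4N_\Box$) and that this cancels the explicit $b_\Box$ appearing in Lemma \ref{lm:dimsep}. Once this cancellation is verified, the rest is bookkeeping driven by the Euler-type identity above.
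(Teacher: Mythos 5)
Your proposal is correct and follows essentially the same route as the paper's own proof: invoke Corollary \ref{eq:csep} to get complete separation, substitute the exact value from Corollary \ref{cor:m1dim} into Lemma \ref{lm:dimsep}, observe that $\sum_{\cE}(r_1+r_2)=4N_\Box-b_\Box$ cancels against the explicit boundary-edges-on-rectangles term, and eliminate $N_1$ via $2N_1=4N_\Box+3N_\Delta+N_1^\partial$. The bookkeeping you describe reproduces the intermediate formula (\ref{eq:dimforme}) and the stated coefficients exactly.
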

\begin{proof}
By Corollary \ref{eq:csep}, the spaces $M^1(\tau_1,\tau_2)$ of all interior edges
completely separate their end-vertices. We combine Lemma \ref{lm:dimsep} and Corollary \ref{cor:m1dim},
observing that summing up all $r_1,r_2$ and boundary edges on rectangles counts
all edges of rectangles. We obtain
\begin{align} \label{eq:dimforme}
\dim S^1_k(\cM)=   
& \; 3\,N_0+N_0^+ +(2k-9) \,N_1 +2\,N_1^\partial-\delta(\cM) \\[1pt]
& +\left( k^2-6k+17 \right) N_{\Box} % \#\{\mbox{\rm rectangles}\} 
+ \frac{k^2-9k+26}{2} \, N_{\Delta}. % \#\{\mbox{\rm triangles}\}   
% & +3\,\#\{\mbox{\rm vertices}\}+1\,\#\{\mbox{\rm crossing vertices}\} \\[2pt]
% & +(2k-9) \,\#\{\mbox{\rm edges}\} +2 \,\#\{\mbox{\rm boundary edges}\}
% - \! \sum_{{\rm\scriptsize interior}\;\cE}  \! \delta(\cE). \qquad\; \nonumber
\end{align}
We eliminate the number of edges %$\#\{\mbox{\rm edges}\}$ 
using
\begin{equation} \label{eq:polygee}
2\,N_1=4\, N_{\Box}+3\,N_{\Delta}+N_1^\partial
% 2\,\#\{\mbox{\rm edges}\}=4\, \#\{\mbox{\rm rectangles}\}+3\,\#\{\mbox{\rm triangles}\} 
% +1\,\#\{\mbox{\rm boundary edges}\} \;
\end{equation}
and get the claimed formula. 
Note that the numbers of triangles and boundary edges have the same parity.
\end{proof}
 
\begin{corollary}
Suppose that $\delta(\cE)=1$ for all interior edges $\cE$ of $\cM$, and $k\ge 6$. 
Then
\begin{align*}
\dim S^1_k(\cM)= & \;
3\,N_0+N_0^+ +(k-2) \,N_1^\partial \nonumber
+(k+1)\left( (k-3) \, N_{\Box} % \#\{\mbox{\rm rectangles}\} %
+ \frac{k-4}{2} \, N_{\Delta}  % \#\{\mbox{\rm triangles}\} 
\right).
\end{align*}
\end{corollary}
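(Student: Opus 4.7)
The plan is to derive the corollary as a direct specialization of Theorem \ref{th:dimform}. First I verify the hypothesis $k\ge\max(2d_1(\cE)+3,d_2(\cE)+4)$ at every interior edge $\cE$. Part \refpart{iii} of Lemma \ref{lm:twsyzygy} yields
\[
d_1(\cE)+d_2(\cE)=\delta(\cE)+2-r_1-r_2=3-r_1-r_2,
\]
and a short case analysis on the polygon types meeting at $\cE$ suffices. When both are rectangles, $d_1+d_2=1$ forces $(d_1,d_2)=(0,1)$, giving the bound $\max(3,5)=5$. When one rectangle meets one triangle, $d_1+d_2=2$; the case $d_1=0$ would force a non-zero element of $\cZ_0$, but the corresponding degree constraints together with $\gcd(a,b,c)=1$ make $\cZ_0$ trivial, so $(d_1,d_2)=(1,1)$ and the bound is $5$. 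When both are triangles, $d_1+d_2=3$, the same $\cZ_0$-triviality argument excludes $d_1=0$, hence $(d_1,d_2)=(1,2)$ and the bound is $\max(5,6)=6$. Thus $k\ge 6$ is enough in every case.

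With the hypothesis secured, I substitute $\delta(\cM)=N_1-N_1^\partial$ (each interior edge contributes exactly $1$) into formula (\ref{eq:dimform}). To eliminate the remaining edge count I use the incidence identity (\ref{eq:polygee}), which rearranges to
\[
N_1-N_1^\partial=2\,N_\Box+\frac{3\,N_\Delta-N_1^\partial}{2}.
\]
Plugging this expression back into (\ref{eq:dimform}) merges the boundary-edge coefficient into $(k-\tfrac{5}{2})+\tfrac{1}{2}=k-2$, reduces the rectangle coefficient from $k^2-2k-1$ to $k^2-2k-3=(k+1)(k-3)$, and reduces the triangle coefficient from $(k^2-3k-1)/2$ to $(k^2-3k-4)/2=(k+1)(k-4)/2$. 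The $3N_0$ and $N_0^+$ terms are untouched, and collecting everything assembles the claimed formula.

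The only genuinely non-routine step is the verification that $k\ge 6$ is sufficient, which in turn rests on excluding the extremal case $d_1(\cE)=0$ whenever a triangle is involved; I expect this to be the main obstacle only in the sense that one must look carefully at which monomial degrees are actually admissible in $\cZ_0$ under the rectangle/triangle grading. Once that is checked, the remainder of the argument is purely algebraic bookkeeping inside formula (\ref{eq:dimform}), including the factorizations $k^2-2k-3=(k+1)(k-3)$ and $k^2-3k-4=(k+1)(k-4)$ which are exactly what produce the common factor $(k+1)$ in the stated formula.
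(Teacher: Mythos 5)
Your proposal is correct and follows essentially the same route as the paper: verify the degree hypothesis of Theorem \ref{th:dimform} (the paper simply asserts $d_1(\cE)\le 1$, $d_2(\cE)\le 2$ with equality for triangle--triangle edges, where you supply the case analysis via Lemma \ref{lm:twsyzygy}\refpart{iii} and the triviality of $\cZ_0$), then substitute $\delta(\cM)=N_1-N_1^\partial$ and eliminate the edge count with (\ref{eq:polygee}). The extra care you take in excluding $d_1(\cE)=0$ when a triangle is involved is exactly the point the paper glosses over, and your arithmetic matches the stated coefficients.
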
 
\begin{proof}
We have $d_1(\cE)\le 1$, $d_2(\cE)\le 2$ on all interior edges $\cE$,
with the equalities whenever two triangles are glued. Hence the bound $k\ge 6$.
Then $\delta(\cM)$ %sum $\sum \delta(\cE)$ in (\ref{eq:dimform}) 
counts the interior edges. This count is eliminated using (\ref{eq:polygee}).
\end{proof}
A basis of $S^1_k(\cM)$ can be constructed following the partitioning \refpart{E1}--\refpart{E4}.
The dimension formula may still hold for a few smaller $k$, as demonstrated in \S \ref{sec:exbasis}.
We prove in Lemma \ref{lm:lbound} that for $k\ge 2$ the numerical value in (\ref{eq:dimform})
is a lower bound for the dimension of $S^1_k(\cM)$. The subsequent examples illustrate sharpness
of the degree bounds.

In the case of orientable surfaces, we can use algebraic topology to eliminate
the number of vertices. % $\#\{\mbox{\rm vertices}\}$. 
If $\cM$ has a boundary, let $\widetilde{\cM}$
denote the topological surface constructed as follows. For each boundary component $\cB$ of $\cM$,
let $n_\cB$ denote the number of edges on $\cB$. If $n_\cB\ge 3$, we adjoin to $\cM$ a polygon 
with $n_\cB$ sides  and glue its edges consequently to the edges of $\cB$ by (say) linear homeomorphisms.
If $n_\cB=1$ we adjoin a circular disk by gluing its boundary circle to the edge of $\cB$ by
a homeomorphism. If $n_\cB=2$ we adjoin a circular disk, divide its boundary into two half-circles
and glue them consequently to the two edges of $\cB$.
After doing this to all boundary components,  we get a closed surface $\widetilde{\cM}$ without boundary.
If $\cM$ is orientable, so is $\widetilde{\cM}$. In that case, let $g(\cM)$ denote the genus of $\widetilde{\cM}$.
\begin{theorem} \label{th:gdimform}
Suppose that the underlying topological space $\cM$ is % connected and 
orientable. 
Let $k$ denote an integer such that $k\ge (2d_1(\cE)+3,d_2(\cE)+4)$ for all interior edges $\cE$. 
Then
\begin{align*}
\dim S^1_k(\cM)= & \; 6-6g\big(\cM\big)+N_0^+ -\delta(\cM) 
-3\,\#\{\mbox{\rm boundary components}\}  \\[1pt]
%- \! \sum_{{\rm\scriptsize interior}\;\cE} \! \delta(\cE)  \nonumber\\[1pt]
& +(k-1) \,N_1^\partial
+ \left( k^2-2k+2 \right) N_{\Box} + \frac{(k-1)(k-2)}{2} \, N_{\Delta}.  
\end{align*}
\end{theorem}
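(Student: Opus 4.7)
The plan is to derive the formula directly from Theorem \ref{th:dimform} by eliminating $N_0$ via the Euler characteristic of the closed surface $\widetilde{\cM}$ described just before the theorem statement. The hypothesis on $k$ is identical, so the hard analytical work (separation of vertices, syzygy degree estimates, partitioning \refpart{E1}--\refpart{E4}) is already in place; what remains is a purely combinatorial-topological computation.

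First, I would examine how the three face-count numbers change under the construction of $\widetilde{\cM}$. For each boundary component $\cB$ of $\cM$ with $n_\cB$ edges, one polygonal disk is glued to $\cB$ along all of its boundary edges. In every case ($n_\cB\ge 3$, $n_\cB=2$, $n_\cB=1$), no new vertices and no new edges are created; only one new 2-cell is added per boundary component. Writing $b$ for the number of boundary components of $\cM$ and $N_2=N_\Box+N_\Delta$ for the number of 2-cells, the closed orientable surface $\widetilde{\cM}$ has
\[
\chi(\widetilde{\cM})=N_0-N_1+N_2+b=2-2g(\cM).
\]

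Next, I would couple this with the standard incidence identity
\[
2\,N_1=4\,N_\Box+3\,N_\Delta+N_1^\partial,
\]
already used in (\ref{eq:polygee}) of the proof of Theorem \ref{th:dimform}. Solving these two relations for $N_0$ yields
\[
N_0=N_\Box+{\textstyle\frac12}N_\Delta+{\textstyle\frac12}N_1^\partial-b+2-2g(\cM),
\]
so
\[
3N_0=3\,N_\Box+{\textstyle\frac32}N_\Delta+{\textstyle\frac32}N_1^\partial-3b+6-6g(\cM).
\]

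Finally, I would substitute this expression for $3N_0$ into the formula of Theorem \ref{th:dimform} and collect terms. The $N_0^+$, $\delta(\cM)$ terms pass through unchanged; the $N_1^\partial$ coefficient becomes $(k-\frac52)+\frac32=k-1$; the $N_\Box$ coefficient becomes $(k^2-2k-1)+3=k^2-2k+2$; and the $N_\Delta$ coefficient becomes $\frac{k^2-3k-1}{2}+\frac32=\frac{k^2-3k+2}{2}=\frac{(k-1)(k-2)}{2}$. Together with the constant $6-6g(\cM)-3b$, this is exactly the claimed formula, with $b=\#\{\mbox{boundary components}\}$. The only point requiring care is the vertex/edge bookkeeping when filling boundary components of small size; I expect no genuine obstacle there, since the boundary identifications in Definition \ref{def:g0complex} accommodate $n_\cB\in\{1,2\}$ without introducing new 0- or 1-cells.
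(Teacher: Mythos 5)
Your proposal is correct and follows essentially the same route as the paper: the paper's proof likewise writes the Euler characteristic of $\widetilde{\cM}$ as $2-2g=N_0-N_1+N_\Box+N_\Delta+\#\{\mbox{boundary components}\}$, combines it with (\ref{eq:polygee}) to solve for $N_0$, and substitutes into Theorem \ref{th:dimform}. Your coefficient bookkeeping ($k-\frac52+\frac32=k-1$, etc.) checks out exactly.
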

\begin{proof}
The Euler characteristic \cite{Wiki} for the % bounderless 
surface $\widetilde{\cM}$ is
\begin{align}
 2-2g\big(\widetilde{\cM}\big) = & N_0 -N_1+
N_\Box+N_\Delta+\#\{\mbox{\rm boundary components}\}.
\end{align}
Combination with (\ref{eq:polygee}) gives
\begin{align}  \label{eq:topolov}
N_0 = & \,
2-2g\big(\widetilde{\cM}\big)+N_\Box+{\textstyle \frac12}\,N_\Delta
+{\textstyle \frac12}\,N_1^\partial-\#\{\mbox{\rm boundary components}\}.
\end{align}
The claimed formula follows.
\end{proof}
\begin{remark} \rm
For the rational $G^1$ polygonal surfaces made up of triangles only,
a formula equivalent to (\ref{eq:dimforme}) is formulated in \cite[Theorem 6.4.6]{raimundas}.
The proof in \cite{raimundas} has two errors. % inaccuracies. 
Firstly, the restrictions of Theorem \ref{cond:comp2} are not taken into account. 
The formula is thereby wrongly claimed for the surfaces with bad crossing vertices.
Secondly, the degree bounds in \cite[Lemma 6.4.5]{raimundas}
for strong and complete separation of vertices do not ensure existence of offset splines.
There is a typo $t \leftrightarrow z$ twice on line 8 in the proof of \cite[Lemma 6.4.5]{raimundas},
and the splines $g_3,h_3$, $\tilde{g}_3,\tilde{h}_3$ might thoroughly vanish 
% together with the first order or bilinear derivatives 
at {\em both} end-vertices.
Theorem 6.4.7 in \cite{raimundas} matches Lemma \ref{lm:lbound} here, but misses the bound $k\ge 2$.
The numbers $d_1,d_2$ in \cite[\S 6.4]{raimundas} equal our $d_1(\cE)-1$, $d_2(\cE)-1$, respectively.
% The Steps \refpart{C1} %--\refpart{C4} are explicitly formulated in \cite[Algorithm 6.2.1]{raimundas}.
The crossing vertices are called {\em particular vertices} in \cite{raimundas}.
\end{remark}

\subsection{Polynomials on rectangles an triangles}
\label{sec:polyfaces}

To present our examples more efficiently, we recall the classical  Bernstein-B\'ezier bases 
of polynomial functions on rectangles and triangles. Besides, we prove that the formula
in (\ref{eq:dimform}) is a lower bound $\dim S^1_k(\cM)$ when $k\ge 2$.

For any three non-collinear points $X,Y,Z\in\RR^2$, let $u_{XY}^Z$ denote the linear function 
that evaluates to $0$ at $X,Y$, and evaluates to $1$ at $Z$. 
For any polygon with the edges $XY$, $XZ$, the functions $u_{XY}^Z$, $u_{XZ}^Y$ form
standard coordinates attached $XY$ or $XZ$ by Definition \ref{def:stcoor}.
The functions $u_{XY}^Z$, $u_{XZ}^Y$, $u_{YZ}^X$ are the {\em barycentric coordinates} \cite{Wiki}
on the triangle \fc{XYZ}. They are positive inside the triangle, and satisfy 
\begin{equation} \label{eq:baryc}
u_{XY}^Z+u_{XZ}^Y+u_{YZ}^X=1.
\end{equation}
For a rectangle \fc{XYZW}, we have $u_{XY}^Z+u_{ZW}^X=1$ and $u_{XW}^Y+u_{YZ}^X=1$.
We refer to the function pairs $(u_{XY}^Z,u_{ZW}^X)$, $(u_{XW}^Y,u_{YZ}^X)$
as the {\em tensor product coordinates} of the rectangle. 

Let $(u,v,w)$ denote the barycentric coordinates of a triangle.
A polynomial function of degree $\le k$ can be expressed as a homogeneous polynomial
in $u,v,w$ of degree $k$ by homogenizing $1=u+v+w$.
It is customary in CAGD to express the polynomial functions of degree $\le k$
in  the {\em Bernstein-B\'ezier basis}:
\begin{equation} \label{eq:bbb}
\frac{k!}{i!j!(k-i-j)!} \, u^i \, v^j \, w^{k-i-j}, \qquad 0\le i+j \le k.
\end{equation}
The coefficients in a linear expression in this basis are called {\em control coefficients}.
In particular, the control coefficients % in the Bernstein-B\'ezier expression 
of the constant function 1 are all equal \mbox{to 1.} 
The {\em interior control coefficients} are to the terms with all $i,j,k-i-j$ positive.
The ``corner" terms (with $i\le 1, j\le 1$)  % A spline with the corner control coefficients
of a Bernstein-B\'ezier expression
\[
c_0 \, w^k+c_1\,kuw^{k-1}+c_2\,kvw^{k-1}+c_3\,k(k-1)uvw^{k-2}+\ldots
\]
determine the $J^{1,1}$-jet at the vertex $u=0$, $v=0$:
\[
c_0+k(c_1-c_0)u+k(c_2-c_0)v+k(k-1)(c_3-c_2-c_1+c_0)uv+\ldots
\]
 
Let $(u,\tilde{u})$, $(v,\tilde{v})$ denote the tensor product coordinates of a rectangle.
A polynomial in $\RR[u,v]$ of degree $k$ in $u$ and of degree $\ell$ in $v$ has {\em bidegree} $(k,\ell)$.
The polynomial functions of bidegree $(k,\ell)$ can be expressed in
the {\em tensor product Bernstein-B\'ezier basis}
\begin{equation} \label{eq:tpbbb}
{k\choose i} {\ell\choose j}\, u^i \,\tilde{u}^{k-i} \, v^j \, \tilde{v}^{\ell-j}, \qquad
0\le i\le k, \quad 0\le j\le \ell.
\end{equation}
The {\em interior control coefficients} are to the terms with all $i,k-i,j,\ell-j$ positive.
Similarly, the ``corner" terms % (i.e., with $i\le 1, j\le 1$)  % A spline with the corner control coefficients
\[
c_0 \, \tilde{u}^k\tilde{v}^\ell+c_1\,ku\tilde{u}^{k-1}\tilde{v}^\ell
+c_2\,\ell v\tilde{u}^k\tilde{v}^{\ell-1}+c_3\,k\ell uv\tilde{u}^{k-1}\tilde{v}^{\ell-1}+\ldots
\]
determine the $J^{1,1}$-jet at the vertex $u=0$, $v=0$:
\[
c_0+k(c_1-c_0)u+\ell(c_2-c_0)v+k\ell(c_3-c_2-c_1+c_0)uv+\ldots
\]
\begin{lemma} \label{lm:lbound} 
If $k\ge 2$, the dimension of $S^1_k(\cM)$ is greater than or equal to 
the value on the right-hand side of  $(\ref{eq:dimform})$.
\end{lemma}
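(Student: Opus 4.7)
The plan is to mimic the counting in the proof of Theorem \ref{th:dimform}, replacing its reliance on Corollary \ref{eq:csep} (which needs $k\ge\max(2d_1+3,d_2+4)$) by the universal inequality of Corollary \ref{cor:m1dim}, and observing that the $J^{1,1}$-jets at any vertex are spanned by $1,u,v,uv$, all of degree $\le 2$, so that the vertex-image splines of Lemmas \ref{prop:Hg}--\ref{prop:Hgb} can be realized as polynomial splines of degree $\le k$ for any $k\ge 2$. This is the reason for the threshold in the lemma.

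Concretely I would follow the five-fold construction \refpart{D1}--\refpart{D5} of \S\ref{sec:generate}. At each vertex $\cP$, Lemmas \ref{prop:Hg}--\ref{prop:Hgb} supply $3+n(\cP)+e_+(\cP)-e_\perp(\cP)$ independent elements of $\img W_\cP$, which lift to splines in $S^1_k(\cM)$ of degree $\le k$. At each interior edge $\cE=\tau_1\sim\tau_2$, the subspace $\ker(W_0\oplus W_1)\subset M^1_k(\tau_1,\tau_2)$ lifts to splines supported on $\Omega_1\cup\Omega_2$ and thoroughly vanishing elsewhere; the bound $\dim M^1_k(\tau_1,\tau_2)\ge 2k-\delta(\cE)+r_1+r_2+1$ from Corollary \ref{cor:m1dim} is valid for every $k$, since the proof of Lemma \ref{lm:twsyzygy}\refpart{ii} preserves it (trivially) in the ranges $k<d_1$ and $d_1\le k<d_2$ as well, and we have $\dim(\img W_0\oplus\img W_1)\le 10-e_\perp(\cE)$ by Lemma \ref{lm:edgew}. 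Boundary edges and polygons contribute via $\ker W_{P,\tau}\cap\ker W_{Q,\tau}$ and via multiples of $L_\Omega^2$, exactly as in the proof of Lemma \ref{lm:dimsep}.

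Summing all contributions and applying the standard combinatorial identities $\sum_\cP n(\cP)=4N_\Box+3N_\Delta$, $\sum_\cP e_+(\cP)=N_0^+$, $\sum_\cP e_\perp(\cP)=\sum_\cE e_\perp(\cE)$, together with $2N_1=4N_\Box+3N_\Delta+N_1^\partial$, reproduces exactly the numerical value on the right-hand side of (\ref{eq:dimform}).

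The hard part is establishing linear independence of the constructed splines without the complete separation assumption: for small $k$, some individual contributions (such as $2k-7$ per triangular boundary edge at $k\in\{2,3\}$, or $(k-3)^2$ per rectangle at $k=2$) are negative, corresponding to no genuine localized splines. I would resolve this through a filtration argument, ordering the constructed splines by their signatures under the successive evaluation maps $W_\cP$, $\pi_\cE$, $\pi_\tau$, and showing that a formal overcount in one term is offset by real dependencies counted in another term through the same combinatorial identities used to sum them. This preserves the total inequality and delivers the value of (\ref{eq:dimform}) as a bona fide lower bound for all $k\ge 2$.
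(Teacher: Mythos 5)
Your overall plan for $k\ge 4$ is essentially the paper's: keep the \refpart{E1}--\refpart{E4} accounting, use Corollary \ref{cor:m1dim} (which indeed holds for all $k$) in place of the complete-separation hypothesis, and argue that any failure of separation moves splines between the vertex count and the edge count without changing the total. The paper makes this precise by introducing, for each interior edge $\cE$, the deficit $m_\cE=\bigl(10-e_\perp(\cE)\bigr)-\dim\img(W_0\oplus W_1)\big|_{M^1_k(\tau_1,\tau_2)}$: the vertex contribution \refpart{E1} drops by at most $\sum_\cE m_\cE$ while the kernel $\ker(W_0\oplus W_1)$ inside $M^1_k$ grows by exactly $m_\cE$ per edge, so the adjustments cancel term by term. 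Your ``filtration argument, ordering the constructed splines by their signatures'' gestures at the same cancellation but never pins down the quantity that transfers; as written it cannot be checked. Still, for $k\ge 4$ this is a presentational rather than a substantive gap.

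The genuine gap is at $k=2$ and $k=3$, which is where the threshold $k\ge 2$ actually bites. You lump together terms that are formally negative (e.g.\ $2k-7$ per triangular boundary edge) with terms that are formally positive but spurious (e.g.\ $(k-3)^2=1$ per rectangle at $k=2$, or $(k-4)(k-5)/2$ equal to $1$ at $k=3$ and $3$ at $k=2$ per triangle); note that $(k-3)^2$ at $k=2$ is $+1$, not negative. These two kinds of terms behave oppositely: a negative formal contribution only deepens the underestimate and is harmless for a lower bound, whereas a positive contribution from \refpart{E4} with no actual splines behind it (there are no multiples of $L_\Omega^2$ of degree $\le 3$ on a triangle, nor of bidegree $\le(2,2)$ on a rectangle) is an overcount of $N_\Delta$ at $k=3$ and of $N_\Box+3N_\Delta$ at $k=2$ that must be cancelled by something real. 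The paper cancels it by showing that the $\sum m_\cE$ extra relations are themselves dependent: for $k=3$ each triangle's unique interior Bernstein--B\'ezier coefficient admits three expressions through the $J^{1,1}$-jets at its three corners, and the induced three equalities contribute only two independent relations, recovering exactly one unit per triangle; for $k=2$ the analogous count over the coefficients of $uv$, $uw$, $vw$ yields $9$ equalities of which only $6$ are independent, recovering $3$ per triangle, with a similar argument for the rectangle. Nothing in your proposal identifies this mechanism, and without it the claimed lower bound is not established below $k=4$.
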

\begin{proof} Suppose first that $k\ge 4$. For each interior edge $\cE=\tau_1\sim\tau_2$, 
the map $W_0\oplus W_1$ of Lemma \ref{lm:separate} restricted to $M^1_k(\tau_1,\tau_2)$
might have smaller image than of the general dimension $10-e_{\perp}(\cE)$.
Let $m_\cE\ge 0$ denote this (possible) dimension deficit. 
The dimension of splines in \refpart{E1},
lifted from the direct sum of all $M(P)$ over the polynomial vertices $P$,
is at least the sum of all expressions (\ref{eq:E1spline}) minus $\sum m_\cE$.
The dimension of splines contributed by $\cE$ in \refpart{D3}, \refpart{E2}
equals (\ref{eq:E2spline}) plus $m_\cE$. Summing up the dimensions as in Lemma \ref{lm:dimsep}
with the adjustments by $\pm m_\cE$ can only underestimate the dimension of the spline space.
The adjustments $\pm m_\cE$ cancel out, like the numbers $e_\perp(\cE)$.
After applying Corollary \ref{cor:m1dim} we get the claim for $k\ge 4$.

If $k=3$, we have an overestimate by $N_\Delta$ in \refpart{E4}. 
For each triangle \fc{XYZ} we have 3 expressions for the unique interior control coefficient
in terms of the $J^{1,1}$-coefficients in $M(X)$, $M(Y)$ or $M(Z)$. 
The $\sum m_\cE$ new relations give 3 equalities relating these 3 expressions,
and only 2 of those equalities are independent. 
Hence the overestimate by $N_\Delta$ is cancelled by the dependencies among
the $\sum m_\cE$ new relations. If $k=2$, we have a similar cancelation of the overestimate
by $N_\Box$, and it remains to cancel $3N_\Delta$. For a quadratic polynomial on a triangle \fc{XYZ},
each of the control coefficients to $uv$, $uw$, $vw$ has 3 expressions in terms of
the $J^{1,1}$-coefficients in $M(X)$, $M(Y)$ or $M(Z)$. The $\sum m_\cE$ relations 
give 9 equalities between those expressions, only 6 of them are independent similarly.
Hence the overestimate by $3N_\Delta$ is cancelled as well.
\end{proof}

\begin{example} \rm
Consider a triangulation of a polygonal region $\widetilde{\sgma}$ in $\RR^2$, and a $G^1$ polygonal surface $\cR$
defined by the parametric continuity on the whole $\widetilde{\sgma}$. (See Remark \ref{rm:parametric}.)
We have thus $\delta(\cE)=0$, $d_1(\cE)=d_2(\cE)=1$ for each interior edge $\cE$.
For $k\ge 5$, Theorem \ref{th:gdimform} gives %we have the dimension formula 
\begin{align}
\dim S^1_k(\cR)= 3+N_0^++(k-1) \,N_1^\partial+\frac{(k-1)(k-2)}{2} \, N_\Delta.
\end{align}
This result was proved in \cite{ms} with an explicit basis congruous to \refpart{E1}--\refpart{E4}. 
The formula holds for $k=4$ as well \cite{alf4-1}. 
Lemma \ref{lm:lbound} for this case is proved in \cite{schumaker79}.
The statement of Lemma \ref{lm:lbound} does not hold if $k=1$ and %$N_0^+>0$
crossing vertices are present, since $\dim S^1_1(\cR)=3$.
See also \cite[Example 6.26]{raimundas}, \cite{bil}. % \cite{m96}, 
\end{example}
\begin{example} \rm  \label{ex:torus}
The torus example in \cite[Example 6.31]{raimundas} glues two triangles using parametric continuity.
The triangles can be glued first to a quadrangle, then to a torus
(by parallel translations or the well-known identification \cite{cltopology} of the opposite edges). 
The dimension formula is $\dim S^1_k=(k-1)(k-2)$ for $k\ge 5$.
But $\dim S^1_4=7$, showing that the bound $d_2+4$ in Theorem \ref{th:dimform}   can be sharp.
\end{example}
\begin{example} \label{ex:platonic} \rm
For the octahedral example in \cite{VidunasAMS} (and Example \ref{eq:octahed} here) 
the dimension formula $\dim S^1_k=4(k-1)(k-2)$ holds for $k\ge 3$. 
Similarly, \cite[Example 6.27]{raimundas} shows that 
a tetrahedral construction with 4 triangles glued by a linear $\delta(\cE)=1$ data
into a topological sphere has $\dim S^1_k=2(k-1)(k-2)$ if \mbox{$k\ge 3$}. 
For a similar Platonic \cite{PetersPlat} cubical construction with 6 rectangles, 
the dimension formula is $\dim S^1_k=6(k-1)^2$ if \mbox{$k\ge 2$}.
In these examples, all interior control coefficients can be chosen freely,
and each choice leads to a unique spline. 
On the other hand, a Platonic construction of icosahedron with $20$ triangles
and all $30$ edges glued linearly has the dimension formula $\dim S^1_k=10k^2-30k-4$ for $k\ge 6$.
\end{example}

\section{The example of a pruned octahedron}
\label{sec:poctah}

This extensive example demonstrates the full work % to be done % 
and technical details  of building a workable $G^1$ polygonal surface 
% gluing data (that is, a {$G^1$ polygonal surface}) 
and computing a basis of % polynomial 
splines of bounded degree.
This example can be broadly read without acquittance with the material
between \S \ref{sec:serverts}--\S \ref{sec:generate} and \S \ref{eq:spdim}--\S \ref{sec:dformula}.

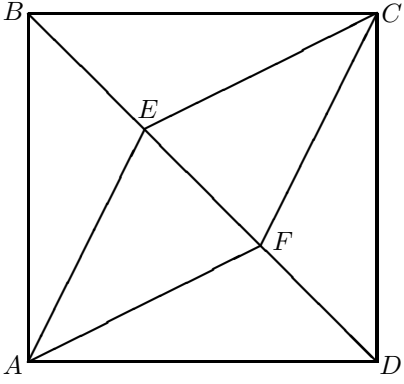
\begin{figure} \setlength{\unitlength}{0.88pt}
\[ \begin{picture}(170,155)(-10,-3)
\thicklines \put(0,150){\line(1,-1){150}} 
\put(0,0){\line(1,0){150}} \put(0,150){\line(1,0){150}}
\put(0,0){\line(0,1){150}} \put(150,0){\line(0,1){150}}
\put(0,0){\line(1,2){50}} \put(0,0){\line(2,1){100}}
\put(100,50){\line(1,2){50}} \put(50,100){\line(2,1){100}}
\put(-11,-5){$A$} \put(152,146){$C$}  \put(-11,147){$B$}  \put(151,-5){$D$}
\put(47,105){$E$} \put(105,48){$F$} 
\end{picture} \]
\caption{The Schlegel diagram of the pruned octahedron}
% \caption{Reference [Wiki]: {\sf https://en.wikipedia.org/wiki/Schlegel\_diagram}}
\label{fig:aocta}
\end{figure}

Figure \ref{fig:aocta} depicts a {Schlegel diagram} \cite{Wiki} 
of a convex polyhedron in $\RR^3$, 
with 6 triangular facets and one rectangle \fc{ABCD} ``at the back" of the picture.
We define a closed $G^1$ polygonal surface $\cH$
from the same six  triangles \fc{AEF}, \fc{CEF}, \fc{ABE}, \fc{BCE}, \fc{ADF}, \fc{CDF}
and the rectangle \fc{ABCD} by keeping the same incidence relations. 
Topologically, $\cH$ is a sphere. If we append an edge \fc{BD} and 
split the rectangle into  two triangles \fc{ABD} and \fc{BCD},
we get the octahedron (combinatorially) as in \cite{VidunasAMS} and Example \ref{eq:octahed}.

For an edge \fc{XY} from Figure \ref{fig:aocta},
let $u_X^Y$ denote the linear function on the edge
with the values $u_X^Y(X)=0$, $u_X^Y(Y)=1$. 
Following the notation of \S \ref{sec:polyfaces}, we have 
\begin{equation}
u_A^B=u_{AE}^B\big|_{AB}=u_{AD}^B\big|_{AB}, % \qquad\mbox{etc.}
\end{equation}
etc. We refer to the gluing data along the edge \fc{XY}  by $(\ma_{XY}(u_X^Y),\mb_{XY}(u_X^Y))$.
With the Schlegel diagram in mind, \fc{XY} denotes both interior edge of $\cH$
and the two polygonal edges. Similarly, we have coinciding denotation for each vertex of $\cH$
and the corresponding polygonal vertices. We adjust the notation 
in (\ref{eq:wv})--(\ref{eq:wev}),  (\ref{eq:s1comp}),  (\ref{eq:w0}) accordingly.

\subsection{The gluing data}
\label{sec:gdata}

With the topological and combinatorial structure decided,
we first choose the structure of tangent sectors as in Figure \ref{fig:g1v},
as prescribed in \S \ref{rm:gconstr}. % \refpart{ii}, 
We select Hahn's symmetric data (\ref{eq:hsym}) of Example \ref{ex:hahnsym}.
Therefore $E,F,A,C$ % in $\cH$ 
are crossing vertices %, and choose the most 
with these relations between directional derivatives at them:
\begin{align} \label{eq:horder4}
\partial_{EA}+\partial_{EC}=0, \qquad \partial_{EB}+\partial_{EF}=0, \nonumber \\
\partial_{FA}+\partial_{FC}=0, \qquad \partial_{FD}+\partial_{FE}=0, \nonumber \\
\partial_{AB}+\partial_{AF}=0, \qquad \partial_{AE}+\partial_{AD}=0,\\
\partial_{CB}+\partial_{CF}=0, \qquad \partial_{CE}+\partial_{CD}=0. \nonumber
\end{align}
At the vertices of valency 3 we have
\begin{equation} \label{eq:horder3}
\partial_{BA}+\partial_{BC}+\partial_{BE}=0, \qquad 
\partial_{DA}+\partial_{DC}+\partial_{DF}=0.
\end{equation}

Next we choose the glueing data  % $(\ma_{XY}(u_X^Y),\mb_{XY}(u_X^Y))$ 
along the edges of $\cH$. % $XY$ from of Figure \ref{fig:aocta}.
To glue the triangles \fc{EFA} and \fc{EFC} along $EF$,
we interpolate the following relations between the derivatives:
\begin{equation} \label{eq:dinterp1}
\partial_{EA}+\partial_{EC}=0,  \qquad \partial_{EA}+\partial_{EC}=2\,\partial_{EF}.
\end{equation}
The second expression here is actually % a representation of 
$\partial_{FA}+\partial_{FC}=0$  rewritten following (\ref{eq:reders}):
\begin{equation} \label{eq:dinterp2}
\partial_{FA}=\partial_{EA}-\partial_{EF}, \qquad 
\partial_{FC}=\partial_{EC}-\partial_{EF}.
\end{equation}
We choose the linear interpolation of the tangent relations in (\ref{eq:dinterp1}):
\begin{equation} \label{eq:croscros}
\partial_{EA}+\partial_{EC}=2u_E^F\,\partial_{EF}.
\end{equation}
Thereby we set up $\ma_{EF}(u_E^F)=2u_E^F$, $\mb_{EF}(u_E^F)=-1$,
as in Example \ref{ex:joining}.

The edges $EA$, $EC$, $FA$, $FC$ connect crossing vertices just as $EF$.
We take the linear glueing data
\begin{align}
\partial_{EB}+\partial_{EF}=2u_E^A\,\partial_{EA}, \qquad
\partial_{EB}+\partial_{EF}=2u_E^C\,\partial_{EA},  \\
\partial_{FD}+\partial_{FE}=2u_F^A\,\partial_{FA}, \qquad
\partial_{FD}+\partial_{FE}=2u_F^C\,\partial_{FA}. \nonumber
\end{align}
The new conditions (\ref{eq:ddv1})--(\ref{eq:ddv2}) are satisfied across \fc{AE}, \fc{EC} 
and  \fc{AF}, \fc{FC} with the derivatives $\ma'=2$ and all constant $\mb=-1$. 
As only triangles are glued, the balancing condition of Definition \ref{def:balance} holds, 
just as in the linear Hahn %Peters-Fan
gluing of rectangles in \cite{Peters2010}.

Remarkably, the linear glueing data along $AB$, $AD$, $CB$, $CD$ looks the same:
\begin{align} \label{eq:glue34}
\partial_{AE}+\partial_{AD}=2u_A^B\,\partial_{AB}, \qquad
\partial_{AF}+\partial_{AB}=2u_A^D\,\partial_{AD}, \nonumber \\
\partial_{CE}+\partial_{CD}=2u_C^B\,\partial_{CB}, \qquad
\partial_{CF}+\partial_{CB}=2u_C^D\,\partial_{CD},
\end{align}
because $\partial_{BA}+\partial_{BC}+\partial_{BE}=0$ is rewritten to
$\partial_{AE}+\partial_{AD}=2\,\partial_{AB}$ by following (\ref{eq:reders}) 
and (\ref{eq:rectuv}):
\begin{equation}
\partial_{BA}=-\partial_{AB},  \quad \partial_{BC}=\partial_{AD}, \quad
\partial_{BE}=\partial_{AE}-\partial_{AB},
\end{equation}
etc. The linear glueing data satisfies (\ref{eq:ddv1})--(\ref{eq:ddv2}),
but the balancing condition does not apply across $A$ and $C$.

For %$G^1$ 
glueing the triangles \fc{EBA} and \fc{EBC} along the edge \fc{EB},
we interpolate these relations between directional derivatives:
\begin{equation}
\partial_{EA}+\partial_{EC}=0,  \qquad \partial_{EA}+\partial_{EC}=3\,\partial_{EB}.
\end{equation}
The latter relation is $\partial_{BA}+\partial_{BC}+\partial_{BE}=0$ rewritten using
\begin{equation}
\partial_{BA}=\partial_{EA}-\partial_{EB}, \quad 
\partial_{BC}=\partial_{EC}-\partial_{EB}, \quad
\partial_{BE}=-\partial_{EB}.
\end{equation}
The balancing condition across \fc{BE}, \fc{EF} % and \fc{EF}, \fc{FD} 
is not satisfied, though only triangles are involved. 
To satisfy (\ref{eq:ddv1})--(\ref{eq:ddv2}) we take the quadratic interpolation % on \fc{EB}:
\begin{equation} \label{eq:crossing3}
\partial_{EA}+\partial_{EC}=\left( 2u_E^B+(u_E^B)^2\right)\,\partial_{EB}.
\end{equation}
Similarly, for glueing along \fc{FD} we choose
\begin{equation} \label{eq:crossing3a}
\partial_{FA}+\partial_{FC}=\left( 2u_F^D+(u_F^D)^2\right)\,\partial_{FD}.
\end{equation}
Alternative glueing data for the edges \fc{EB}, \fc{FD} is briefly discussed in \S \ref{sec:abe}.

We constructed full $G^1$ gluing data on $\cH$ without reference to coordinate systems,
only using directional derivatives and the functions $u_X^Y$ on edges.
If barycentric and tensor product coordinates are locally used,
and polynomial functions are presented in the Bernstein-B\'ezier bases 
(\ref{eq:bbb}), (\ref{eq:tpbbb}), then the directional derivatives are expressed as
\begin{equation}
\partial_{AB}=\frac{\partial}{\partial u_{AE}^B}-\frac{\partial}{\partial u_{BE}^A}
=\frac{\partial}{\partial u_{AD}^B}-\frac{\partial}{\partial u_{BC}^A}, \qquad \mbox{etc.}
\end{equation}
% Remarkable structure

\subsection{The splines around $EF$, $EA$, $EC$, $F\!A$, $FC$}
% $\mbox{\fc{EF}, \fc{EA}, \fc{EC}, \fc{FA}, \fc{FC}}$}
\label{sec:ef}

We seek a spline basis for $S^1_k(\cH)$ with $k=4$ or slightly larger.
The degree convention is given in \S \ref{sec:boxdelta}.
We start by constructing the $M^1$-spaces of \S \ref{eq:edgespace}
for the 5 edges connecting the crossing vertices. 
The formulas are stated for the edge $EF$.
To have them for the edges $EA$, $EC$, $F\!A$, $FC$, replace the labels
\begin{align*}
% (X,Y,Z_1,Z_2) \in & \, \{
(E,F,A,C)\mapsto (E,A,B,F), (E,C,B,F), (F,A,E,D) \mbox{ or } (F,C,E,D).
% u=u_{XZ_1}^Y,  u=u_{XZ_1}^Y
%(Z_1,Z_2) \in & \, \{ (A,C), (B,F), (B,F), (E,D), (E,D) \},\\
%(u,v_1,v_2)\in & \, \{(u_E^F,u_{EF}^A,u_{EF}^C),
%(u_E^A,u_{EA}^F,u_{EA}^B),(u_E^C,u_{EC}^F,u_{EF}^B), \\
%& \; (u_F^A,u_{FA}^E,u_{FA}^D),(u_F^C,u_{FC}^E,u_{FC}^D)\}.
\end{align*}
Let $M^1(EF)$ denote the corresponding space $M^1(\tau_1,\tau_2)$ in (\ref{eq:s1comp}). 
It consists of the polynomial pairs (\ref{eq:splines2}) 
with $u=u_{EA}^F$ or $u=u_{EC}^F$, % (depending on the component), 
$v_1=u_{EF}^{A}$, $v_2=u_{EF}^{C}$, 
% \begin{equation} \label{eq:splines2}
% \big( f_0(u_{EA}^F)+u_{EF}^{A}f_1(u_{EA}^F),
% f_0(u_{EC}^F)+u_{EF}^{C}f_2(u_{EC}^F) \big)
% \end{equation}
and such that $(h_1(u),h'_0(u),h_2(u))$ is a syzygy between $(1,-2u,1)$. 
The $\RR[u]$-module of syzygies is freely generated by $(1,0,-1)$, $(2u,1,0)$. 
The space of syzygies of degree $\le k$ has dimension $2k+1$.
It corresponds to the splines in $M^1(EF)$ of degree $\le k+1$.
The dimension of this spline space is $2k+2$, as we include the constant splines.
Therefore $\dim M^1_k(EF)=2k$.
% the space dimension of splines of degree $\le 4$ equals $8$.

We keep the notation $M(E)$, $M(F)$ of (\ref{eq:wv}) for the spaces of $J^{1,1}$-jets % in (\ref{eq:wv}) % at $X$, $Y$ 
of polynomial functions on \fc{EFA}, and let $M'(E)$, $M'(F)$ denote the similar spaces
for polynomials on \fc{EFC}. Since $E$, $F$ are crossing vertices,
the spline jets in $M'(E)$ are determined by $M(E)$ by Lemma \ref{prop:Hg},
and the jets in $M'(F)$ are determined by $M(F)$.
Consider the $W_0$ maps in (\ref{eq:w0}), denoting them
\[
W_{\!EF}:M^1(EF)\to M(E)\oplus M'(E),\quad
W_{\!F\!E}:M^1(EF)\to M(F)\oplus M'(F).
\]
We use the variables $u,v$ for the jet spaces, with $v$ identified with $u_{EF}^{A}$ or $u_{EF}^{C}$.
The explicit action of $W_{\!EF}$ on the spline defined by $(h_0(u),h_1(u),h_2(u))$ in (\ref{eq:splines2}) is then
\begin{align} \label{eq:wef}
\big( & h_0(0)+h'_0(0)\,u+h_1(0)\,v+h'_1(0)\,uv,  \\
& h_0(0)+h'_0(0)\,u-h_1(0)\,v+(-fh_1(0)+2h_0'(0))\,uv\big). \nonumber
\end{align}
Following the derivative transformation (\ref{eq:dinterp2}), 
$W_{\!F\!E}$ sends the same spline to
\begin{align} \label{eq:wef2}
\big( & h_0(1)-h'_0(1)\,u+(h_1(1)-h'_0(1))\,v+(-h'_1(1)+h''_0(1))\,uv, \\
& h_0(1)-h'_0(1)\,u+(h'_0(1)-h_1(1))\,v+(h'_1(1)-h''_0(1)-2h'_0(1))\,uv\big). 
\quad \nonumber
\end{align}
The dual variable transformation for the endpoint symmetry % (dual to derivative relations (\ref{eq:dinterp2}))
is more cumbersome than jet relations: we should transform $u\mapsto 1-u-v$ and then reduce mod $v^2$. 

The map $W_{\!EF}\oplus W_{\!F\!E}$ as in Lemma \ref{lm:separate} has the image of dimension $8$.
Since \mbox{$\dim M^1_4(EF)=8$}, % By the dimension count, 
there is a chance that the splines in $M_4^1(EF)$ % of degree $\le 4$
map isomorphically to this image or to \mbox{$M(E)\oplus M(F)$}. This turns out to be the case.
Here are the splines of degree 4 that have exactly one non-zero coefficient % (equal to 1) 
in $M(E)\oplus M(F)$. They are presented in terms of the polynomial triples % $f_0,f_1,f_2$ in 
$(h_0(u),h_1(u),h_2(u))$ in (\ref{eq:splines2}), with the common factor brought forward:
\begin{align} \label{eq:us}
U_1=& \,(1-u)\cdot \left( (1-u)(1+2u), -6u^2, -6u^2 \right), 
&  U_5=& \, u^2 \cdot \big( 3-2u,6(1-u), 6(1-u) \big), \qquad \nonumber \\
U_2=& \, u \, (1-u)\cdot \big( 1-u,-2u, 2(1-2u) \big), 
&  U_6=& \, u^2 \cdot \left( 1-u, 1-2u, 3-4u \right),\nonumber  \\
U_3=& \, (1-u)^2\,(1+2u) \cdot \left( 0,1, -1 \right),
& U_7=& \, u^2\,(3-2u) \cdot \left( 0, 1, -1 \right),  \\ %\nonumber  \\
U_4=& \, u \, (1-u)^2 \cdot \left( 0,1, -1 \right),
&  U_8=& \, u^2\,(1-u) \cdot \left( 0, 1, -1 \right). \nonumber 
%U_1=& \left( (1-u)^2(1+2u), -6u^2(1-u), -6u^2(1-u) \right),\\
%U_2=& \left( u(1-u)^2,-2u^2(1-u), 2u(1-u)(1-2u) \right),\\
%U_3=& \left( 0,(1-u)^2(1+2u), -(1-u)^2(1+2u) \right),\\
%U_4=& \left( 0,u(1-u)^2, -u(1-u)^2 \right),\\
%U_5=& \left( u^2(3-2u),6u^2(1-u), 6u^2(1-u) \right),\\
%U_6=& \left( u^2(1-u),u^2(1-2u), u^2(3-4u) \right),\\
%U_7=& \left( 0,u^2(3-2u), -u^2(3-2u) \right),\\
%U_8=& \left( 0,u^2(1-u), -u^2(1-u) \right).
\end{align}
The splines $U_1,U_5$ evaluate to $1$ at $E$ or $F$ (respectively),
and their all relevant derivatives $\partial/\partial u$, $\partial/\partial v$,
$\partial^2/\partial u\partial v$ at both $E$, $F$ are zero.
Similarly, the other splines evaluate exactly one of those derivatives at $E$ or $F$ 
(of the restriction to \fc{EFA}) to $1$.
The polynomials $h_0,h_1,h_2$ have degree $\le 3$, 
but $h_1,h_2$ are multiplied by $u_{EF}^A$ or $u_{EF}^C$
giving the degree $4$.

Note that the splines $U_2,U_6$ do not reflect the symmetry between the triangles \fc{EFA} and \fc{EFC}.
By the construction, they have a derivative $\partial/\partial u$ along $EF$ equal to 1,
and their $uv$ terms (representing a mixed derivative $\partial^2/\partial u\partial v$) 
in $M(E)$ or $M(F)$ are zero.
But their $uv$ terms in $M'(E)$ or $M'(F)$ are non-zero, 
% so their restriction to \fc{EFC} has a non-zero mixed derivative $\partial^2/\partial u\partial v$.
reflecting the specific relation of $J^{1,1}$-jets around crossing vertices.
The splines $U_2+U_4$ and $U_6+U_8$ have symmetric specializations to \fc{EFA} and \fc{EFC},
but their $uv$ term in $M(E)$ or $M(F)$ is non-zero as well. 

\begin{figure}
\begin{align*}
\begin{picture}(100,24)(-4,-3)
\put(2,0){\line(1,0){96}}  \put(2,0){\vector(1,1){16}} \put(2,0){\vector(1,-1){16}}
\put(98,0){\vector(-1,1){16}} \put(98,0){\vector(-1,-1){16}}
% \put(50,-48){\line(0,1){96}}  %  \put(50,-48){\vector(0,-1){0}}
% \put(52,4){$A$} 
\put(100,-3){$F$} \put(21,13){$C$}   \put(72,13){$C$} 
\put(-8,-3){$E$} \put(20,-19){$A$}  \put(72,-19){$A$}  
\put(143,22){$U_1:$} \put(282,22){$U_2:$} 
\put(2,-45){$U_3:$} \put(143,-45){$U_4:$} \put(282,-45){$U_5:$} 
\put(2,-113){$U_6:$} \put(143,-113){$U_7:$} \put(282,-113){$U_8:$} 
\end{picture}  
\hspace{54pt} &  \;
\begin{array}{c@{\;}c@{\;}c@{\;}c@{\;}c@{\;}c@{\;}c@{\;}c@{\;}c}
&  1 && 1 && 0 &&  0 \\
\bf 1 && \bf 1 && \bf \frac12 && \bf 0 && \bf 0 \\
& 1 && 1 && 0 &&  0 
\end{array} 
&  \hspace{32pt}
\begin{array}{c@{\;}c@{\;}c@{\,}c@{\,}c@{\;}c@{\;}c@{\;}c@{\;}c}
& 0 && \frac{5}{12} && 0 && 0 \\
\bf 0 && \bf \frac14 && \bf \frac16 && \bf 0 && \bf 0 \\
& 0 && \frac14 && 0 && 0 
\end{array} \\[28pt]
\begin{array}{c@{}c@{}c@{}c@{\,}c@{\;}c@{\;}c@{\;}c@{\;}c}
& -\frac14 && -\frac14 && 0 && 0 \\
\bf 0 && \bf 0 && \bf 0 && \bf 0 && \bf 0 \\
& \frac14 && \frac14 && 0 && 0 
\end{array} 
\hspace{54pt} & \,
\begin{array}{c@{\;}c@{\;}c@{}c@{}c@{\;}c@{\;}c@{\;}c@{\;}c}
& 0 && \!-\frac{1}{12} && 0 &&  0 \\
\bf 0 && \bf 0 && \bf 0 && \bf 0 && \bf 0 \\
&  0 && \frac1{12} && 0 &&  0 
\end{array}
&  \hspace{32pt}
\begin{array}{c@{\;}c@{\;}c@{\;}c@{\;}c@{\;}c@{\;}c@{\;}c@{\;}c}
& 0 && 0 && 1 && 1 \\
\bf 0 && \bf 0 && \bf \frac12 && \bf 1 && \bf 1 \\
& 0 && 0 && 1 && 1 
\end{array} \; \\[28pt]
\begin{array}{c@{\;}c@{\;}c@{\;}c@{\;}c@{\,}c@{\,}c@{\;}c@{\;}c}
& 0 && 0 && \frac5{12} && 0 \\
\bf 0 && \bf 0 && \bf \frac16 && \bf \frac14 && \bf 0 \\
& 0 && 0 && \frac14 && 0 
\end{array} 
\hspace{54pt} &  
\begin{array}{c@{\;}c@{\;}c@{\;}c@{\;}c@{}c@{}c@{}c@{\,}c}
& 0 && 0 && -\frac14 && -\frac14 \\
\bf 0 && \bf 0 && \bf 0 && \bf 0 && \bf 0 \\
&  0 && 0 && \frac14 &&  \frac14 
\end{array}
&  \hspace{32pt}
\begin{array}{c@{\;}c@{\;}c@{\;}c@{\;}c@{}c@{}c@{\;}c@{\;}c}
& 0 && 0 && \!-\frac{1}{12} && 0 \\
\bf 0 && \bf 0 && \bf 0 && \bf 0 && \bf 0 \\
& 0 && 0 && \frac{1}{12} && 0 
\end{array} 
\end{align*}
\caption{Splines around the edge $EF$} 
\label{fig:splinef}
\end{figure}
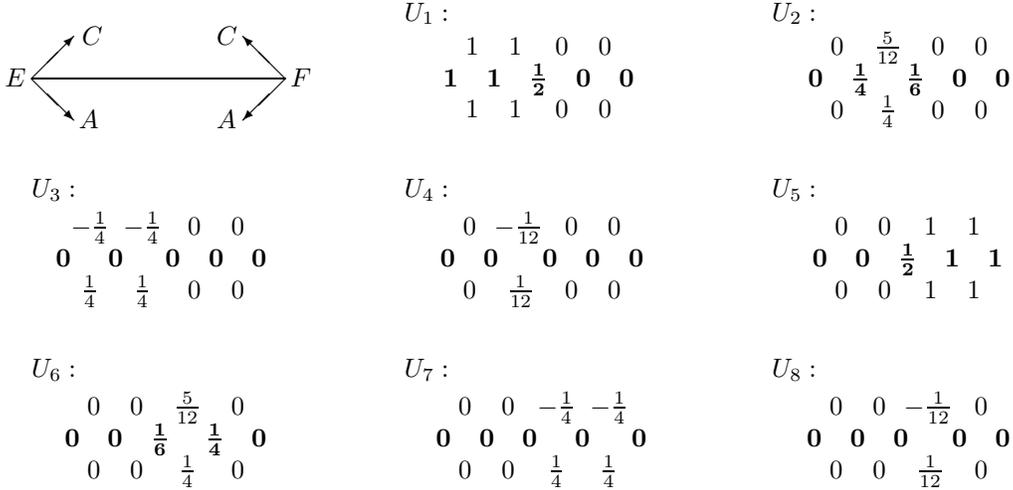

Figure \ref{fig:splinef} represents the 8 splines in $M^1(EF)$ in the Bernstein-B\'ezier basis.
The middle row (in the bold font) of each array displays the control coefficients of the restriction to % \fc{EF}.
the common edge. The restriction to \fc{EFA} is defined by the two bottom rows, while the restriction
\fc{EFC} is defined by the two upper rows. The transition to the Bernstein-B\'ezier basis involves
homogenization with (\ref{eq:baryc}) %\mbox{$1=u_{EA}^F+u_{EF}^A+u_{FA}^E$}, etc.) 
and ignoring the % $O(v^2)$ terms with $v=u_{EF}^A$ of $v=u_{EF}^C$.
terms divisible by $(u_{EF}^A)^2$ or $(u_{EF}^C)^2$. 

\subsection{The splines around $AB$, $AD$, $CB$, $CD$}
\label{sec:ab}

The gluing data (\ref{eq:glue34}) on \fc{AB}, \fc{AD}, \fc{CB}, \fc{CD} 
leads to the same syzygy module $\cZ(1,-2u,1)$ as in \S \ref{sec:ef}, 
generated by the same syzygies  $(1,0,-1)$, $(2u,1,0)$.
However, these 4 edges join a triangle with a rectangle. 
The splines of degree $\le k$ correspond to the syzygies $(A,B,C)$ with
$\deg A\le k-1$, $\deg B\le k-1$, $\deg C\le k$. 
For example, the syzygy $(0,u^3,2u^4)$ gives a spline of degree $4$.
Besides, the vertices $B$, $D$ are not crossing vertices,
so there are 5 degrees of freedom around them by Lemma \ref{lm:edgew}.

Like in the previous section, we give explicit formulas for one edge \fc{AB}.
For the other edges, replace the labels
\begin{align*}
(A,B,E,D)\mapsto (A,D,F,B), \ (C,B,E,D) \mbox{ or } (C,D,F,B).
\end{align*}
Similarly to \S \ref{sec:ef}, let $M^1(AB)$ denote the corresponding space % $M^1(\tau_1,\tau_2)$ 
in (\ref{eq:s1comp}). The splines in $M^1(AB)$ correspond to the polynomial pairs 
(\ref{eq:splines2})  with $u=u_{AE}^B$ or $u=u_{AD}^B$, % (depending on the component), 
$v_1=u_{AB}^{E}$, $v_2=u_{AB}^{D}$, 
% \begin{equation} \label{eq:splines3}
% \big( f_0(u_{AE}^B)+u_{AB}^{E}f_1(u_{AE}^B),
% f_0(u_{AD}^B)+u_{AB}^{D}f_2(u_{AD}^B) \big)
% \end{equation}
such that  $(h_1(u),h'_0(u),h_2(u))\in \cZ(1,-2u,1)$. 
%Compared with $M^1(EF)$
The spaces $M^1(AB)$ and $M^1(EF)$ coincide 
in terms of the polynomial triples $(h_0(u),h_1(u),h_2(u))$,
but the degree grading is different.

\begin{figure}
\begin{align*} \\[-12pt]
\begin{picture}(108,25)(-7,-3)
\put(4,0){\line(1,0){92}}  \put(4,0){\vector(0,1){22}} \put(4,0){\vector(1,-1){16}}
\put(96,0){\vector(0,1){22}} \put(96,0){\vector(-1,-1){16}}
% \put(50,-48){\line(0,1){96}}  %  \put(50,-48){\vector(0,-1){0}}
% \put(52,4){$A$} 
\put(98,-3){$B$} \put(8,17){$D$}   \put(85,17){$C$} 
\put(-7,-3){$A$} \put(22,-19){$E$}  \put(70,-19){$E$}  
\put(149,23){$\widetilde{U}_0:$} \put(298,23){$\widetilde{U}_1:$} 
\put(0,-48){$\widetilde{U}_2:$} \put(143,-48){$\widetilde{U}_3:$} 
\put(292,-48){$\widetilde{U}_4:$} 
\put(-12,-118){$\widetilde{U}_5:$} \put(87,-118){$\widetilde{U}_6:$} 
\put(198,-118){$\widetilde{U}_7:$} \put(312,-118){$\widetilde{U}_8:$} 
\end{picture} 
 \hspace{49pt} & \hspace{6pt}
\begin{array}{c@{\;}c@{\;}c@{\;}c@{}c@{}c@{}c@{\;}c@{\;}c}
0 && 0 &&  -\frac{5}{24} && \frac1{16} && 0 \\[2pt]
\bf 0 && \bf 0 && \bf \!-\frac{1}{12} && \bf 0 && \bf 0 \\
& 0 && 0 && 0 && 0 
\end{array}  &  \hspace{20pt}
\begin{array}{c@{\;}c@{\;}c@{\;}c@{}c@{\,}c@{\;}c@{\;}c@{\;}c}
1 && 1 && -1 &&  0 && 0\\
\bf 1 && \bf 1 && \bf 0 && \bf 0 && \bf 0 \\
& 1 && 1 && 0 &&  0 
\end{array} \hspace{6pt} \\[28pt]
\begin{array}{c@{\;}c@{\;}c@{\,}c@{}c@{\,}c@{\;}c@{\;}c@{\;}c}
0 && \frac{3}{8} && -\frac14 && 0  && 0 \\[2pt]
\bf 0 && \bf \frac14 && \bf 0 && \bf 0 && \bf 0 \\
& 0 && \frac14 && 0 && 0 
\end{array}
 \hspace{58pt} & 
\begin{array}{c@{\,}c@{}c@{\,}c@{}c@{\,}c@{\;}c@{\;}c@{\;}c}
\!-\frac14 && -\frac14 && -\frac18 && 0 && 0 \\
\bf 0 && \bf 0 && \bf 0 && \bf 0 && \bf 0 \\
& \frac14 && \frac14 && 0 && 0 
\end{array} & \hspace{20pt}
\begin{array}{c@{\;}c@{}c@{}c@{}c@{\,}c@{\;}c@{\;}c@{\;}c}
0 && \!-\frac{1}{16} && \!\!-\frac{1}{24}  &&  0 &&  0 \\
\bf 0 && \bf 0 && \bf 0 && \bf 0 && \bf 0 \\
&  0 && \frac1{12} && 0 &&  0 
\end{array} \\[29pt] \hspace{-4pt}
\begin{array}{c@{\;}c@{\;}c@{\;}c@{\;}c@{\;}c@{\;}c@{\;}c@{\;}c}
0 && 0 && 2 && 1 && 1 \\
\bf 0 && \bf 0 && \bf 1 && \bf 1 && \bf 1 \\
& 0 && 0 && 1 && 1 
\end{array} \hspace{80pt} & \hspace{-59pt}
\begin{array}{c@{\;}c@{\;}c@{\;}c@{\,}c@{}c@{\;}c@{\;}c@{}c}
0 && 0 && \frac{37}{24} && 0 && \!-\frac14 \\[2pt]
\bf 0 && \bf 0 && \bf \frac23 && \bf \frac14 && \bf 0 \\
& 0 && 0 && \frac14 && 0 
\end{array}  \hspace{23pt}
\begin{array}{c@{\;}c@{\;}c@{\;}c@{}c@{\,}c@{}c@{\,}c@{}c}
0 && 0 && \!-\frac18 && \!-\frac14 && \!-\frac14 \\
\bf 0 && \bf 0 && \bf 0 && \bf 0 && \bf 0 \\
&  0 && 0 && \frac14 &&  \frac14 
\end{array} \hspace{-28pt}
& %\hspace{2pt}
\begin{array}{c@{\;}c@{\;}c@{\;}c@{}c@{}c@{}c@{\;}c@{\;}c}
0 && 0 &&  -\frac{1}{4} && 0 && 0 \\[2pt]
\bf 0 && \bf 0 && \bf \!-\frac{1}{12}\! && \bf 0 && \bf 0 \\
& 0 && 0 && \frac{1}{12} && 0 
\end{array} \hspace{-10pt}
\end{align*}
\caption{Splines around the edge $AB$} 
\label{fig:splinab}
\end{figure}

We keep the notation $M(A)$, $M(B)$ for the spaces of $J^{1,1}$-jets at the triangle \fc{ABE} corners,
and let $M'(A)$, $M'(B)$ denote the spaces of $J^{1,1}$-jets at the rectangle corners.
Following (\ref{eq:w0}), the $W_0$-map $W_{\!AB}:M^1(AB)\to M(A)\oplus M'(A)$ acts as in (\ref{eq:wef}),
but $W_{\!BA}:M^1(AB)\to M(B)\oplus M'(B)$ acts by
\begin{align}
\big( & h_0(1)-h'_0(1)\,u+(h_1(1)-h'_0(1))\,v+(-h'_1(1)+h''_0(1))\,uv,   \quad \nonumber \\
& h_0(1)-h'_0(1)\,u+(2h'_0(1)-h_1(1))\,v-h'_2(1)\,uv\big). 
\end{align}
The dimension of $\img W_{\!BA}$ equals 5, with $h'_2(1)$ as an extra degree of freedom.
Note that the coefficients to the terms $-h'_0(1)u$, $(h_1(1)-h'_0(1))v$, \mbox{$(2h'_0(1)-h_1(1))v$} 
sum up to zero, reflecting (\ref{eq:horder3}).

The splines in $M_k^1(AB)$ are generated by the polynomial multiples of degree $\le k-1$
of the syzygies $(1,0,-1)$, $(0,1,2u)$. Together with the constant splines, $\dim M_k^1(AB)=2k+1$.
In particular, $\dim M_4^1(AB)=9$ just as the combined dimension of $\img W_{AB}$
and $\img W_{BA}$. The expressions in (\ref{eq:us}) define splines in 
$M_4^1(AB)$ just as in $M_4^1(EF)$. Additionally, we have the spline
\begin{align} \label{eq:f9} % \hspace{-11pt}
\widetilde{U}_0 %(h_0,h_1,h_2) 
=  u^2(1-u) \cdot \left( -{\textstyle\frac12\,}(1-u),1,-3+4u  \right)
\end{align} 
in terms of (\ref{eq:splines2}).
This spline in $M^1(EF)$ has degree $5$ and is annihilated by $W_{\!EF}$, $W_{\!F\!E}$,
but $W_{\!B\!A\,}\widetilde{U}_0=(0,uv)$. The splines in (\ref{eq:us}) have to be adjusted 
by $\widetilde{U}_0$ to have $f'_2(1)=0$. Here is the ``diagonal" spline basis for $\dim M_4^1(AB)$:
\begin{align} \label{eq:sbasis2}
&\widetilde{U}_0, \quad  \widetilde{U}_1=U_1+6\widetilde{U}_0, 
\quad \widetilde{U}_2=U_2+2\widetilde{U}_0, 
\quad \widetilde{U}_3=U_3, \quad \widetilde{U}_4=U_4,  \\
& \widetilde{U}_5=U_5-6\widetilde{U}_0, \quad \widetilde{U}_6=U_6-U_8-6\widetilde{U}_0, 
\quad \widetilde{U}_7=U_7,\quad \widetilde{U}_8=U_8+\widetilde{U}_0. \qquad \nonumber
\end{align}
Figure \ref{fig:splinab} presents these splines in $M^1(AB)$ as arrays of 
relevant Bernstein-B\'ezier coefficients.

\begin{figure}
\begin{align*} \\[-10pt]
\begin{picture}(113,20)(-7,-3)
\put(2,0){\line(1,0){94}}  \put(2,0){\vector(1,1){16}} \put(2,0){\vector(1,-1){16}}
\put(96,0){\vector(-1,1){16}} \put(96,0){\vector(-1,-1){16}}
\put(95,-9){$B$} \put(21,13){$C$}   \put(70,13){$C$} 
\put(-7,-3){$E$} \put(20,-19){$A$}  \put(70,-19){$A$}  
\put(130,21){$V_1:$}  \put(273,21){$V_2:$} 
\end{picture}   &  \hspace{27pt}
\begin{array}{c@{\;}c@{\;}c@{\;}c@{\;}c@{\;}c@{\;}c@{}c@{\,}c@{\;}c@{\;}c@{\;}c@{\;}c}
&  1 && 1 && \frac15 && \!-\frac25 && 0 && 0 \\
\bf 1 && \bf 1 && \bf \frac35 && \bf  \frac15 && \bf 0 && \bf 0 && \bf 0 \\
& 1 && 1 && \frac35 &&  \frac15 && 0 && 0 
\end{array} & \hspace{14pt}
\begin{array}{c@{\;}c@{\;}c@{\,}c@{}c@{}c@{}c@{}c@{}c@{\;}c@{\;}c@{\;}c@{\;}c}
& 0 && \frac{7}{30} && \frac1{20} && \!-\!\frac2{15} && 0 && 0 \\
\bf 0 && \bf \frac16 && \bf \frac2{15} && \bf \frac1{20} && \bf 0 && \bf 0 && \bf 0 \\
& 0 && \frac16 && \frac1{6} && \frac1{12} && 0 && 0
\end{array} \\[28pt]
\begin{picture}(113,20)(-7,-3)
\put(2,0){\line(1,0){94}}  \put(2,0){\vector(1,1){16}} \put(2,0){\vector(1,-1){16}}
\put(96,0){\vector(-1,1){16}} \put(96,0){\vector(-1,-1){16}}
\put(95,-9){$D$} \put(21,13){$C$}   \put(70,13){$C$} 
\put(-7,-2){$F$} \put(20,-19){$A$}  \put(70,-19){$A$}  
\put(130,21){$V_5:$}  \put(273,21){$V_6:$} 
\put(36,-46){$V_8:$}  \put(215,-46){$V_9:$} 
\end{picture}   &  \hspace{29pt}
\begin{array}{c@{\;}c@{\;}c@{\;}c@{\;}c@{\;}c@{\;}c@{\;}c@{\;}c@{\;}c@{\;}c@{\;}c@{\;}c}
& 0 && 0 && \frac45 && \frac75 && 1 && 1 \\
\bf 0 && \bf 0 && \bf \frac25 && \bf \frac45 && \bf 1  && \bf 1 && \bf 1 \\
& 0 && 0 && \frac25 && \frac45 && 1 && 1 
\end{array} &  \hspace{14pt}
\begin{array}{c@{\;}c@{\;}c@{\;}c@{\;}c@{\,}c@{}c@{}c@{\,}c@{\,}c@{\;}c@{}c@{\,}c}
& 0 && 0 && \frac{11}{12} && \,\frac{6}{5} && 0 && \!-\frac16 \\
\bf 0 && \bf 0 && \bf \frac13 && \bf \frac{11}{20} && \bf \frac{7}{15} && \bf \frac16 && \bf 0 \\
& 0 && 0 && \frac1{12} && \,\frac{1}{6} &&\frac16 && 0 
\end{array}  \\[28pt]
\begin{array}{c@{\;}c@{\;}c@{\;}c@{}c@{}c@{}c@{}c@{}c@{\,}c@{\,}c@{\;}c@{\;}c}
& 0 && 0 && \!\!-\frac{1}{10} && \!\!-\frac{2}{15} && 0 && 0 \\
\bf 0 && \bf 0 && \bf \!-\frac{1}{30}\! && \bf \!\!-\frac{1}{20}\! && \bf \!\!-\frac{1}{30} && \bf 0 && \bf 0 \\
& 0 && 0 && 0 && \frac{1}{60} && \frac{1}{30} && 0 
\end{array} 
\hspace{-86pt}  & \hspace{115pt}
\begin{array}{c@{\;}c@{\;}c@{\;}c@{}c@{}c@{}c@{}c@{}c@{}c@{\,}c@{\;}c@{\;}c}
& 0 && 0 && \!\!-\frac{1}{12} && \!\!-\frac{1}{10} && \frac{1}{30} && 0 \\
\bf 0 && \bf 0 && \bf \!-\frac{1}{30} && \bf \!\!-\frac{1}{20} && \bf \!\!-\frac{1}{30} && \bf 0 && \bf 0 \\
& 0 && 0 && \!\!-\frac{1}{60} && \!\!-\frac{1}{60} && 0 && 0 
\end{array}  \hspace{-197pt}
\end{align*}
\caption{Splines around the edges \fc{EB}, \fc{FD}} 
\label{fig:splinfd}
\end{figure}
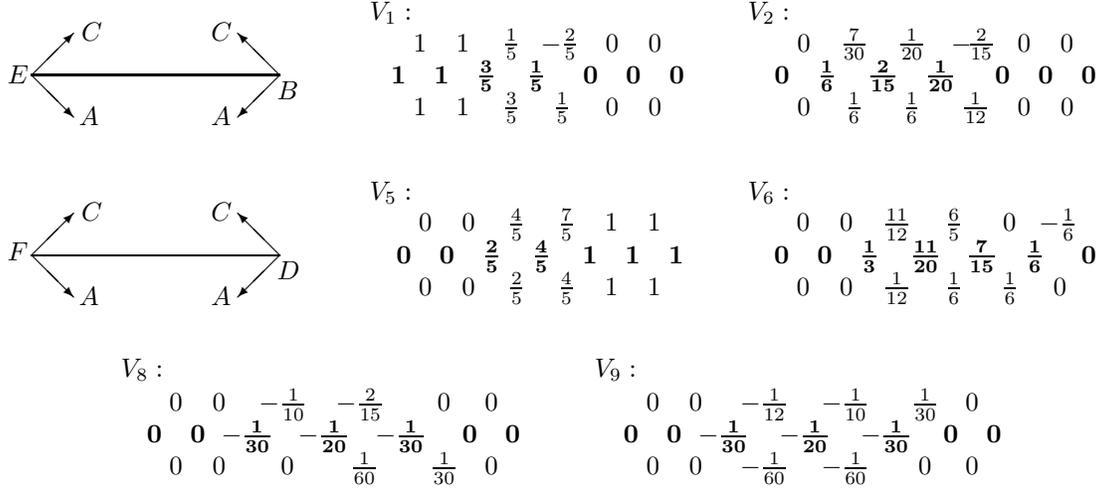

\subsection{The splines around $EB$ and $FD$}
\label{sec:be}

The gluing data (\ref{eq:crossing3}) on the edges \fc{EB},  \fc{FD}
leads to the syzygy module \mbox{$\cZ(1,-2u-u^2,1)$}.
The spline spaces $M^1(EB)$, $M^1(FD)$ are defined as in (\ref{eq:mzz}).
The vertex evaluation maps $W_{EB}\oplus W_{BE}$ and $W_{FD}\oplus W_{DF}$ 
are defined as in (\ref{eq:wef}), (\ref{eq:wef2}) in terms of (\ref{eq:splines2}). 
The dimension of their images equals $9$.

The syzygy module is generated by $(1,0,-1)$, $(2u+u^2,1,0)$. 
The dimension of splines of degree $\le 4$ splines is 7, 
not enough to cover the 9 degrees of freedom. % at vertices.
In particular, there are these relations for the $J^{1,1}$-jets of $M_4^1(EB)$ or $M_4^1(FD)$: 
\begin{align} \label{eq:quarel}
2h_0(0)+h_0'(0) & = \, 2h_0(1)-h_0'(1), \\  \label{eq:quarel2}
2h_0'(0)-6h_0'(1) & = \, (h_0''(1)-h_1'(1))+(h_0''(1)-h_2'(1)).
\end{align}
The first relation is a restriction on the edge control coefficients.
The dimension of degree $\le 5$ splines is 9, but the spline 
\begin{equation} \label{eq:trivzero}
(h_0(u),h_1(u),h_2(u)) = u^2(1-u)^2\cdot (0,1,-1).
\end{equation}
% has zero $J^{1,1}$-jets at the vertices.
thoroughly vanishes at the end-vertices.
There is still a restriction on the jets of $M_5^1(EB)$ or $M_5^1(FD)$,
which is actually $6\times(\ref{eq:quarel})$ minus $(\ref{eq:quarel2})$.

Splines of degree 6 are needed to completely separate the vertices.
Among 9 splines of degree 6 evaluating the 9 degrees of freedom on the triangles \fc{AEB},  \fc{AFD} 
``diagonally" like in (\ref{eq:us}), three splines can be of actual degree 4
and have the same Bernstein-B\'ezier representations as $U_3$, $U_4$, $U_7$
in Figure \ref{fig:splinef}. Let us call these splines $V_3$, $V_4$, $V_7$, respectively.
As an example, here is the representation of $V_3$ as an array 
of degree 6 Bernstein-B\'ezier coefficients:
\begin{equation}
\begin{array}{c@{}c@{\,}c@{}c@{\,}c@{}c@{}c@{}c@{}c@{\;}c@{\;}c@{\;}c@{\;}c}
& -\frac16 && -\frac16 && \!-\frac7{60} && \!-\frac1{20} && 0 && 0 \\
\bf 0 && \bf 0 && \bf 0 && \bf 0 && \bf 0  && \bf 0 && \bf 0 \\
& \frac16 && \frac16 && \frac7{60} && \frac1{20} && 0 && 0 
\end{array}
\end{equation}
The middle coefficients $\pm 7/60$, $\pm 1/20$ 
can be nullified after combination with (\ref{eq:trivzero})  % the splines $u^2(1-u)^3\cdot (0,1,-1)$ 
and other thoroughly vanishing \mbox{$u^3(1-u)^2\cdot (0,1,-1)$}. 
Further, 6 splines evaluating the other 6 degrees of freedom
individually to $1$ are presented in Figure \ref{fig:splinfd},
as the similar arrays of Bernstein-B\'ezier coefficients of degree 6.
They can be modified by the same two polynomial multiples of $(0,1,-1)$. 
The presented splines have the property that the restriction to \fc{AEB} or \fc{AFD}
has degree $4$. 
% In particular, the splines $V_3$, $V_4$, $V_7$  are of degree $4$ actually. 

% \begin{align*}
% (u,v_1,v_2)\in\{(u_E^B,u_{EB}^A,u_{EB}^C),(u_F^D,u_{FD}^A,u_{FD}^C)\}.
% \end{align*}

% \begin{align} \label{eq:quarel2}
% 2V_B+V_B^E=2V_E+V_E^B, \qquad  V_B^{EA}+V_B^{EC}=6V_B^E+2V_E^B, \nonumber\\ 
% 2V_D+V_D^F=2V_F+V_F^D, \qquad  V_D^{FA}+V_D^{FC}=6V_D^F+2V_F^D. \\
% V_B^{EA}+V_B^{EC}=12V_B+12V_B^E-12V_E-4V_E^B, \nonumber\\ 
% V_D^{FA}+V_D^{FC}=12V_D+12V_D^F-12V_F-4V_F^D.
% \end{align}

\subsection{Splines around vertices}
\label{sec:exvertexsp}

From the $M^1$-splines presented in Figures \ref{fig:splinef}, \ref{fig:splinab}, \ref{fig:splinfd}
we can build the splines on $\cH$ characterized in \refpart{E1}.
The first step is to determine the local degrees of freedom 
in the (4 or 3) associated polygonal vertex spaces $M(P)$ in (\ref{eq:wv}) around each vertex of $\cH$.

The degrees of freedom around $A$, $C$ are presented in Figure \ref{fig:freeda},
each in the form of 4 blended $2\times 2$ arrays of corner control coefficients 
(in the Bernstein-B\'ezier expressions) of degree 4 polynomial functions
on the incident polygons. The arrays \refpart{b} and \refpart{c} have a derivative along an edge equal to $1$,
but their standard mixed derivatives are non-zero as well. These two arrays are adjusted by \refpart{d} for the maximal
symmetry on the triangles. 

These degrees of freedom are lifted to splines on $\cH$
in Figure \ref{fig:splina}, presented by blended arrays of control coefficients on the incident polygons.
The missing spline \refpart{c} is a mirror image of the spline \refpart{b}.
The support of these splines consists of the polygons incident to $A$ or $C$,
that is, three triangles and the quadrangle.
Only the control coefficients for these polygons are shown.
% The splines thoroughly vanish on the edges opposite to the vertex $A$ or $C$
% together with the first order derivatives,  
% and are zero on the non-incident triangles. 
The spline \refpart{a} is recognizably built up from copies of the $M^1$-splines $U_1$, $\widetilde{U}_1$.
Similarly, the spline \refpart{b} is build up from $U_2+U_4$, $U_3+U_4$,
$-\widetilde{U}_2-\widetilde{U}_4$, $\widetilde{U}_3+\widetilde{U}_4$,
and the spline \refpart{d} is build up from copies of $U_4$, $\widetilde{U}_4$.
In spline \refpart{d}, we can modify the $0$ entry next to the two $\frac1{24}$ entries to $\frac1{36}$,
so to lower the degree of the specialization to the rectangle.

\begin{figure}
\begin{align*}
\begin{picture}(64,24)(-30,-4)
\put(-32,0){\vector(1,0){64}}  \put(-32,0){\vector(-1,0){0}}  
\put(0,-32){\vector(0,1){64}}  \put(0,-32){\vector(0,-1){0}}  
% \put(2,2){$A/C$} 
\put(25,4){$F$} \put(3,26){$E$}  
\put(-32,4){$B$} \put(3,-32){$D$}  
\put(99,26){\refpart{a}} \put(255,26){\refpart{b}}  
\put(25,-56){\refpart{c}} \put(174,-56){\refpart{d}}  
\end{picture}  && \hspace{-32pt}
\begin{array}{ccccc}
&& \vdots \\[1pt]
& 1 & \bf 1 & 1 & \\[1pt]
\cdots & \bf 1 & \bf 1 & \bf 1 & \cdots \\[1pt]
& 1 & \bf 1 & 1 & \\[-2pt]
&& \vdots \\
\end{array}
& & \hspace{-32pt}
\begin{array}{ccccc}
&& \vdots \\[1pt]
& \frac13 & \bf \frac14 & \frac13 &  \\[2pt]
\cdots  & \bf 0 & \bf 0 & \bf 0 & \cdots  \\[2pt]
& \!\!\!-\frac5{16}\!\! & \bf \!\!-\frac14\!\! & \!\!-\frac13\!\! & \\[0pt]
&& \vdots \\
\end{array}  \\[2pt]
& \hspace{-12pt} \begin{array}{ccccc}
&& \vdots \\
& \!\!-\frac13\!\! & \bf 0 & \frac13 & \\[2pt]
\cdots\! & \bf \!\!-\frac14\!\! & \bf 0 & \bf \frac14 & \cdots \\[2pt]
& \!\!\!-\frac5{16}\!\! & \bf 0 & \frac13 & \\
&& \vdots \\
\end{array}
& & \hspace{-18pt}
\begin{array}{ccccc}
&& \vdots \\
&  \!\!\!-\frac1{12}\!\!  & \bf 0 & \frac1{12} &  \\[1pt]
\cdots  & \bf 0 & \bf 0 & \bf 0 & \cdots  \\[1pt]
&  \frac1{16} & \bf 0& \!\!\!-\frac1{12}\!\!  & \\[2pt]
&& \vdots \\
\end{array}
\end{align*}
\caption{Degrees of freedom around the vertices $A,C$} 
\label{fig:freeda}
\end{figure}
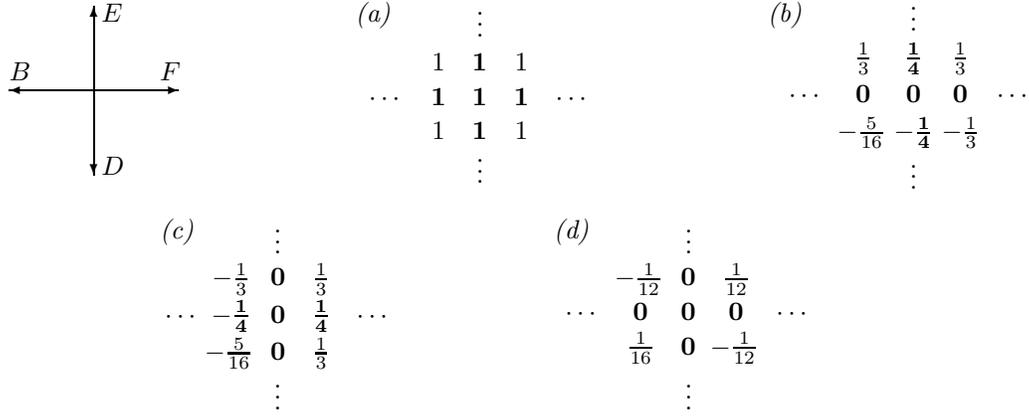

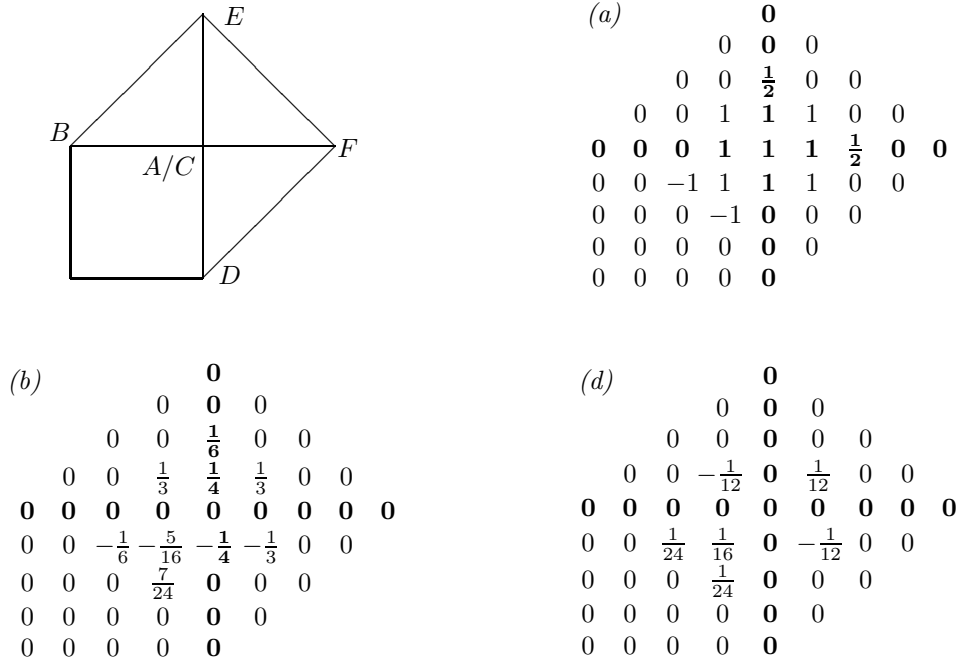
\begin{figure}
\begin{align*} \\[-6pt]
\begin{picture}(158,50)(-30,-3)
\put(0,0){\line(1,0){100}}  \put(0,0){\line(1,1){50}} 
\put(0,0){\line(0,-1){50}} \put(0,-50){\line(1,0){50}}
\put(50,-50){\line(0,1){100}} \put(50,-50){\line(1,1){50}} 
\put(50,50){\line(1,-1){50}}  \put(27,-11){$A/C$} 
\put(101,-4){$F$} \put(58,46){$E$}  
\put(-8,2){$B$} \put(56,-52){$D$}  
\put(195,47){\refpart{a}} % \put(213,26){\refpart{b}}   
\put(-24,-92){\refpart{b}}  \put(192,-92){\refpart{d}}  
\end{picture}  
&  \hspace{64pt}
\begin{array}{ccccccccc}
&&&& \bf 0 \\
&&&  0 & \bf 0 & 0 \\[1pt]
&&  0 & 0 & \bf \frac12 & 0 & 0 \\[1pt]
& 0 & 0 & 1 & \bf 1 & 1 & 0 & 0 \\[1pt]
\bf 0 & \bf 0& \bf 0 & \bf 1 & \bf 1 & \bf 1 & \bf \frac12 & \bf 0 &  \bf 0 \\[1pt]
0 & 0 & \!\!-1\!\! & 1 & \bf 1 & 1 & 0 & 0 \\
0 & 0 & 0 & \!\!-1\!\! & \bf 0 & 0 & 0  \\
0 & 0 & 0 & 0 & \bf 0 & 0 \\
0 & 0 & 0 &0 & \bf 0 \\
\end{array} \\[20pt]
\begin{array}{ccccccccc}
&&&& \bf 0 \\
&&&  0 & \bf 0 & 0 \\[1pt]
&& 0 & 0 & \bf \frac16 & 0 & 0 \\[2pt]
& 0 & 0 & \frac13 & \bf \frac14 & \frac13 & 0 & 0 \\[1pt]
\bf 0 & \bf 0& \bf 0 & \bf 0 & \bf 0 & \bf 0 & \bf 0 & \bf 0 & \bf 0 \\[1pt]
0 & 0 & \!\!-\frac16\!\!  & \!\!\!-\frac5{16}\!\! & \bf \!\!-\frac14\!\! & \!\!-\frac13\!\! & 0 & 0 \\[2pt]
0 & 0 & 0 & \frac{7}{24} & \bf 0 & 0 & 0  \\[1pt]
0 & 0 & 0 & 0 & \bf 0 & 0 \\
0 & 0 & 0 &0 & \bf 0 \\
\end{array}  
& \hspace{60pt}
\begin{array}{ccccccccc}
&&&& \bf 0 \\
&&&  0 & \bf 0 & 0 \\
&& 0& 0 & \bf 0 & 0 & 0 \\[1pt]
& 0 & 0 & \!\!\!-\frac1{12}\!\! & \bf 0 & \frac1{12} & 0 & 0 \\[1pt]
\bf 0 & \bf 0 & \bf 0& \bf 0 & \bf 0 & \bf 0 & \bf 0 & \bf 0 & \bf 0  \\[1pt]
0 & 0 & \!\frac{1}{24}\! & \frac1{16} & \bf 0 & \!\!-\frac1{12}\!\!\! & 0 & 0 \\[2pt]
0 & 0 & 0 & \frac1{24} & \bf 0 & 0 & 0 \\[1pt]
0 & 0 & 0 & 0 & \bf 0 & 0 \\
0 & 0 & 0 &0 & \bf 0 \\
\end{array} 
\end{align*}
\caption{Splines around the vertices $A,C$} 
\label{fig:splina}
\end{figure}

The degrees of freedom around $E$, $F$ are similar. 
Their lifts to splines on $\cH$ are presented in Figure \ref{fig:splinf}.
The lifted splines have degree 4 or 6; corresponding arrays of Bernstein-B\'ezier
coefficients are depicted.  The splines are build from copies of
the $M^1$-splines in Figures \ref{fig:splinef} and \ref{fig:splinfd},
with the degree adjusted and the interior coefficients around edges
symmetrized using $M^1$-splines like $u^3(1-u)^2\cdot(0,1,-1)$.
In particular, the spline \refpart{d} has more zero control coefficients around the vertices $A$, $C$.

The degrees of freedom around $B$, $D$ are presented in Figure \ref{fig:freedb},
each in the form of 3 blended $2\times 2$ arrays of corner control coefficients
of degree 6 polynomial functions on the incident polygons.
The 3 standard mixed derivatives are totally free. 
Lifts to splines on $\cH$ of degree 4 or 6 are presented in Figure \ref{fig:splinb}.
An independent spline corresponding to Figure \ref{fig:freedb}\refpart{f} is not depicted; 
one can take a mirror image of Figure \ref{fig:splinb}\refpart{d} % the bottom spline of degree 6 
for it. The variation of the Bernstein-B\'ezier 
coefficients of these splines to negative values is remarkable.
The splines in Figure \ref{fig:splinb} are build from copies of 
the $M^1$-splines in Figures \ref{fig:splinab} and \ref{fig:splinfd}.
The adjusting spline $10u^3(1-u)^2\cdot(0,1,-1)$ along a rectangle edge
looks as follows in the Bernstein-B\'ezier form: 
\begin{equation}
\begin{array}{c@\hg c@\hg c@\hg c@\hg c@\hg c@{\;}c@\hg c@{\;}c@\hg c@\hg c@\hg c@\hg c}
0 && 0 && 0 && \! -\frac12 \! && \! -\frac23 \! && 0 && 0 \\
 \bf 0 &&  \bf 0 && \bf 0 && \bf 0 && \bf 0 &&  \bf 0 && \bf 0 \\
 & 0 && 0 && 0 && 1 && 0 && 0  
\end{array}
\end{equation}
% A portion of zero control coefficients of the degree 6 splines
% around the opposite rectangular vertex  are not depicted. 

\begin{figure}
\begin{align*} \\[-7pt]
\begin{picture}(86,60)(-4,-40)
\put(2,-36){\line(1,1){72}}  \put(2,-36){\line(1,0){72}}  \put(2,-36){\line(0,1){72}}
\put(2,36){\line(1,-1){72}}   \put(2,36){\line(1,0){72}}  \put(74,-36){\line(0,1){72}}
\put(45,-3){$E$}  \put(77,-37){$F$} \put(77,31){$C$}  
\put(-9,31){$B$} \put(-8,-39){$A$}  
\put(142,25){\refpart{a}} \put(283,25){\refpart{b}}
\end{picture}  
\hspace{63pt} 
\begin{array}{c@\hh c@{\;}c@{\,}c@{\,}c@{\;} c@\hh c@\hh c@\hh c}
\bf 0 && 0 && 0 && 0 && \bf 0 \\
& \bf 0 && 0 && 0 && \bf 0 \\
0 && \bf  0 &&  \frac13 && \bf \frac16 && 0  \\
& 0 && \bf 0 && \bf \frac14 && 0  \\
0 && -\frac13 && \bf 0 && \frac13 && 0  \\
& 0 && \bf \!-\frac14 && \bf 0 && 0  \\
0 && \bf \!-\frac16 &&  \!-\frac13 && \bf 0  && 0  \\
& \bf 0 && 0 && 0 && \bf 0 \\
\bf 0 && 0 && 0 && 0 && \bf 0 \\
\end{array}  \hspace{-78pt} 
&  \hspace{96pt} 
\begin{array}{c@\hh c@\hh c@\hh c@{}c@{\,}c@\hh c@\hh c@\hh c}
\bf 0 && 0 && 0 && 0 && \bf 0 \\
& \bf 0 && 0 && 0 && \bf 0 \\
0 && \bf  0 &&  -\frac1{12} && \bf 0 && 0  \\
& 0 && \bf  0 && \bf 0 && 0  \\
0 && \frac1{12}\!\! && \bf 0 && \frac1{12}\!\! && 0  \\
& 0 && \bf 0 && \bf 0 && 0  \\
0 && \bf 0 &&  \!-\frac1{12} && \bf 0  && 0  \\
& \bf 0 && 0 && 0 && \bf 0 \\
\bf 0 && 0 && 0 && 0 && \bf 0 \\
\end{array}  \\[-74pt]  
\begin{picture}(86,60)(-4,0)
\put(2,-36){\line(1,1){72}}  \put(2,-36){\line(1,0){72}}  \put(2,-36){\line(0,1){72}}
\put(2,36){\line(1,-1){72}}   \put(2,36){\line(1,0){72}}  \put(74,-36){\line(0,1){72}}
\put(45,-3){$F$}  \put(77,-37){$E$} \put(76,33){$C$}  
\put(-9,30){$D$} \put(-9,-37){$A$}   
\put(-49,-46){\refpart{c}} \put(175,-46){\refpart{d}}
\end{picture}  \hspace{68pt} \\[46pt] \hspace{6pt}
\begin{array}{c@\hh c@{\,}c@{\,}c@{\,}c@{\,}c@\hh c@\hh c@\hh c@\hh c@\hh c@\hh c@\hh c}
\bf 0 && 0 && 0 && 0 && 0 && 0 && \bf 0 \\
& \bf 0 && 0 && 0 && 0 && 0 && \bf 0 \\
0 && \bf  0 &&  -\frac1{10} && 0 && \frac3{10} && \bf \frac15 && 0  \\
& 0 && \bf \frac15 && \frac25 && \frac7{10} && \bf \frac12 && 0  \\
0 && -\frac1{10} && \bf \frac35 && 1 && \bf \frac45 && \frac3{10} && 0  \\
& 0 && \frac25 && \bf 1 && \bf 1 && \frac7{10} && 0  \\
0 && 0 && 1 && \bf 1 && 1 && 0 && 0 \\
& 0 && \frac7{10} && \bf 1 && \bf 1 && \frac7{10} && 0  \\
0 && \frac3{10} && \bf \frac45 && 1 && \bf \frac45 && \frac3{10} && 0  \\
& 0 && \bf \frac12 && \frac7{10} && \frac7{10} && \bf \frac12 && 0 \\
0 && \bf \frac15 && \frac3{10} && 0 && \frac3{10} && \bf \frac15 && 0  \\
& \bf 0 && 0 && 0 && 0 && 0 && \bf 0 \\
\bf 0 && 0 && 0 && 0 && 0 && 0 && \bf 0
\end{array} \hspace{-6pt} & \hspace{28pt}
\begin{array}{c@\hh c@{\,}c@{\,}c@{}c@{\,}c@{}c@{}c@{}c@{}c@{}c@{\,}c@\hh c}
\bf 0 && 0 && 0 && 0 && 0 && 0 && \bf 0 \\
& \bf 0 && 0 && 0 && 0 && 0 && \bf 0 \\
0 && \bf  0 && \!-\frac1{40}\! && 0 && 0 %\frac1{15}  
&& \!\bf 0\; && 0  \\
& 0 && \bf \frac1{20} && \!\frac{13}{120}\! && 0 % \frac3{20} 
&& \,\bf 0 && 0  \\
0 && \!-\frac1{40} && \bf \frac2{15} && \frac15 && \bf\;0 && 0 %\!-\frac1{15}  
&& 0  \\
& 0 && \!\frac{13}{120} && \bf \frac16 && \bf \,0 && 0 % \!-\frac3{20} 
&& 0  \\
0 && 0 && \frac15 && \bf 0 && \!-\frac15 && 0 && 0 \\
& 0 && 0 % \frac3{20} 
&& \bf 0\; && \bf \!\!-\frac16\! && -\frac15 && 0  \\
0 && 0 % \frac1{15} 
&& \bf \,0 && \!-\frac15 && \bf \!-\frac15 && -\frac1{10} && 0  \\
& 0 && \bf 0 && 0 % \!-\frac3{20} 
&& -\frac1{5} && \bf \!\!-\frac3{20}\! && 0 \\
0 && \bf \,0 && 0 % \!-\frac1{15} 
&& 0 && -\frac1{10} && \bf \!\!-\frac1{15} && 0  \\
& \bf 0 && 0 && 0 && 0 && 0 && \bf 0 \\
\bf 0 && 0 && 0 && 0 && 0 && 0 && \bf 0
\end{array} \hspace{-2pt}
\end{align*}
\caption{Splines around the vertices $E,F$} 
\label{fig:splinf}
\end{figure}
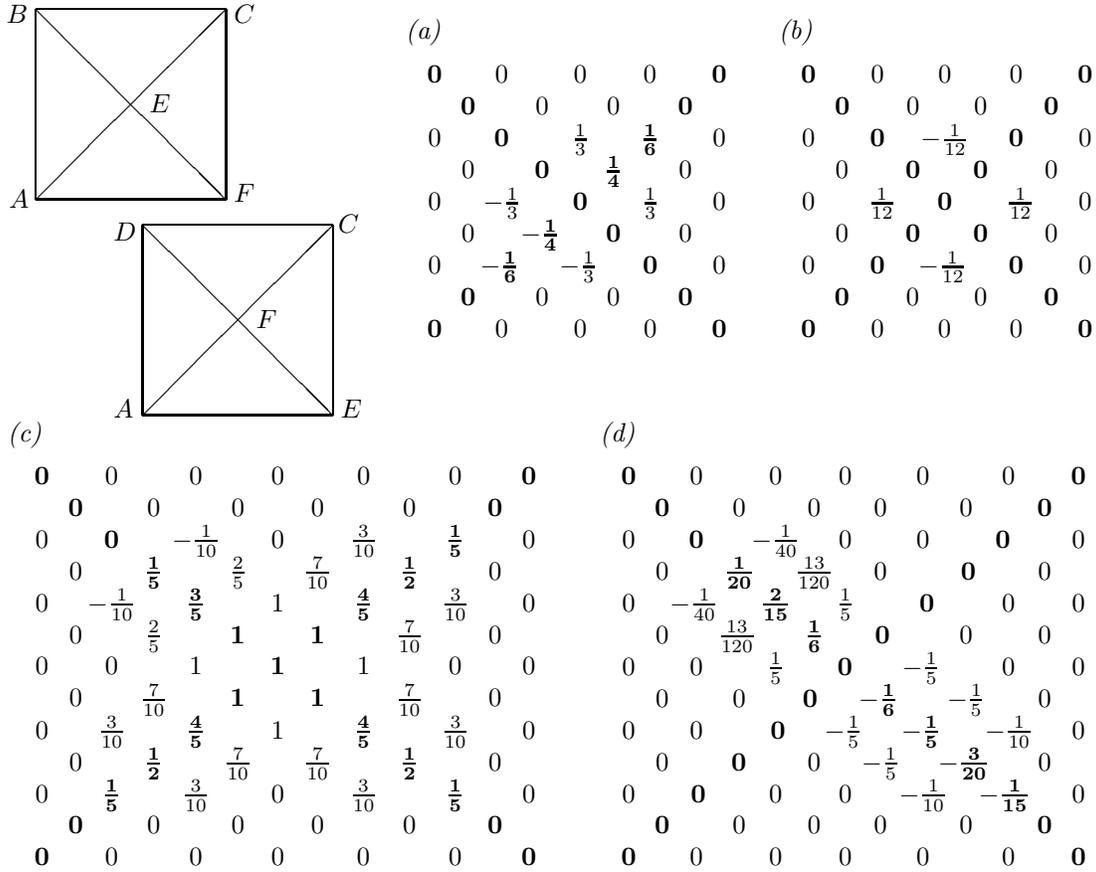

\begin{figure}
\begin{align*}
\begin{picture}(44,38)(-40,-4)
\put(0,0){\vector(-1,0){40}}  \put(0,0){\vector(1,2){18}}  
\put(0,0){\vector(1,-2){18}} 
% \put(2,2){$A/C$} 
\put(4,29){$A$}   \put(-40,4){$E/F$} \put(3,-36){$C$}  
\put(61,24){\refpart{a}} \put(167,24){\refpart{b}}  \put(279,24){\refpart{c}}  
\put(-5,-63){\refpart{d}} \put(108,-63){\refpart{e}}  \put(217,-63){\refpart{f}}  
\end{picture}  && \hspace{-38pt}
\begin{array}{c@{\;}c@{\;}c@{\;}c@{\;}c@{\,}c}
&&&&& \!\iddots \\
&& 1 && \bf 1 \\[1pt]
\cdots & \bf 1 && \bf 1 && 1 \\[1pt]
&& 1 && \bf 1 \\[-3pt]
&&&&& \ddots 
\end{array}
& & \hspace{-32pt}
\begin{array}{c@{\;}c@{}c@{\,}c@{}c@{}c}
&&&&& \!\iddots \\
&& \frac16 && \bf \frac16 \\[1pt]
\cdots\; & \bf 0 && \bf 0 && \! 0 \\[1pt]
&& -\frac16 && \bf -\frac16 \\[-2pt]
&&&&& \ddots 
\end{array}
& & \hspace{-28pt}
\begin{array}{c@{\;}c@{\;}c@{\;}c@{\,}c@{\,}c}
&&&&& \!\iddots \\
&& 0 && \bf 0 \\[1pt]
\cdots\; & \bf 0 && \bf 0 && \! \frac1{36} \\[1pt]
&& 0 && \bf 0 \\[-3pt]
&&&&& \ddots 
\end{array}  \\[10pt]
& \hspace{-12pt} 
\begin{array}{c@{\;}c@{\,}c@{\,}c@{}c@{}c}
&&&&& \!\iddots \\
&& \frac1{12} && \bf -\frac1{12} \\[1pt]
\cdots\; & \bf \frac16 && \bf 0 && \! -\frac16 \\[1pt]
&& \frac1{12} && \bf -\frac1{12} \\[-2pt]
&&&&& \ddots
\end{array}
& & \hspace{-24pt}
\begin{array}{c@{\;}c@{\,}c@{\,}c@{\,}c@{\,}c}
&&&&& \!\iddots \\
&& \frac1{30} && \bf 0 \\[1pt]
\cdots\; & \bf 0 && \bf 0 && \! 0 \\[1pt]
&& 0 && \bf 0 \\[-4pt]
&&&&& \ddots
\end{array}
& & \hspace{-30pt}
\begin{array}{c@{\;}c@{\,}c@{\,}c@{\,}c@{\,}c}
&&&&& \!\iddots \\
&& 0 && \bf 0 \\[1pt]
\cdots\; & \bf 0 && \bf 0 && \! 0 \\[1pt]
&& \frac1{30} && \bf 0 \\[-4pt]
&&&&& \ddots \\
\end{array}
\end{align*}
\caption{Degrees of freedom around the vertices $B$, $D$} 
\label{fig:freedb}
\end{figure}

\begin{figure}
% sqrt3 ratio 2:1 or 5:3
\begin{align*}
\begin{picture}(156,20)(0,-32)
\put(0,0){\line(1,0){40}}  \put(40,0){\line(1,2){20}} \put(40,0){\line(1,-2){20}}
\put(0,0){\line(3,2){60}} \put(0,0){\line(3,-2){60}}
\put(60,-40){\line(1,2){20}} \put(60,40){\line(1,-2){20}}
\put(44,-2){$B$}  \put(63,-43){$C$} \put(64,36){$A$}  
\put(-8,3){$E$} \put(82,-3){$D$}  \put(153,30){\refpart{a}}
\end{picture}  
&  \hspace{-32pt}
\begin{array}{c@\hg c@{}c@\hg c@{}c@\hg c@{\,} c@\hg c@{\;} c@\hg c@\hg c@\hg c@\hg c@\hg c@\hg c@\hg c@\hg c@\hg c@\hg c@\hg c}
&&&&&&&&&&&&&&&&& \bf 0 \\
&&&&&&&&&&&&&& 0 && \bf 0 && 0 \\[-6pt]
&&&&&&&&&&& 0 && 0 && \bf \frac25 && 0 && \ddots \\
&&&&&&&&  0 && 0 && \frac25 && \bf \frac45 && \frac23 && 0  \\[-6pt] 
&&&&& 0 && 0 && 0 && \frac45 && \bf 1 && \frac65 && 0 && \ddots \\
&&0 && 0 && \frac35 && \frac{11}{10} &&  1 && \bf 1 && \frac{19}{15}  && 0 && 0 \\
\bf0&\bf 0 &&\bf\frac25 && \bf\frac45 && \bf 1 && \bf 1 && \bf 1 && 1 && 0 && 0 && 0 \\
&& 0 && 0 && \frac35 && \frac{11}{10} &&  1 && \bf 1 && \frac{19}{15}  && 0 && 0 \\[-6pt]
&&&&& 0 && 0 && 0 && \frac45 && \bf 1 && \frac65 && 0 && \!\iddots \\
&&&&&&&&  0 && 0 && \frac25 && \bf \frac45 && \frac23 && 0 \\[-6pt]
&&&&&&&&&&& 0 && 0 && \bf \frac25 && 0 &&  \!\iddots \\
&&&&&&&&&&&&&& 0 && \bf 0 && 0 \\
&&&&&&&&&&&&&&&&& \bf 0
\end{array}  \\[22pt] 
\begin{picture}(156,20)(0,-32)
\put(0,0){\line(1,0){40}}  \put(40,0){\line(1,2){20}} \put(40,0){\line(1,-2){20}}
\put(0,0){\line(3,2){60}} \put(0,0){\line(3,-2){60}}
\put(60,-40){\line(1,2){20}} \put(60,40){\line(1,-2){20}}
\put(44,-3){$D$}  \put(63,-43){$C$} \put(64,36){$A$}  
\put(-8,2){$F$} \put(82,-3){$B$}   \put(153,-14){\refpart{b}}
\put(-10,-116){\refpart{c}}  \put(181,-190){\refpart{d}}  \put(-10,-287){\refpart{e}}
\end{picture}  \\[-46pt]  & \hspace{-42pt}
\begin{array}{c@\hg c@{}c@\hg c@{}c@\hg c@{\,} c@\hg c@{\;} c@\hg c@\hg c@{}c@{}c@{\,}c@{}c@{}c@{}c@\hg c@\hg c@\hg c}
&&&&&&&&&&&&&&&&& \bf 0 \\
&&&&&&&&&&&&&& 0 && \bf 0 && 0 \\[-6pt]
&&&&&&&&&&& 0 && 0 && \bf -\frac2{15} && 0 && \ddots \\
&&&&&&&&  0 && 0 && -\frac1{24} && \bf -\frac9{40} && -\frac{17}{60} && 0  \\[-6pt] 
&&&&& 0 && 0 && 0 && -\frac1{60} && \bf -\frac15 && -\frac7{15} && 0 && \ddots \\
&&0 && 0 && \frac12 && \frac{41}{60} && \frac1{12} && \bf -\frac1{12} && -\frac25  && 0 && 0 \\
\bf0&\bf 0 &&\bf\frac13 && \bf\frac{11}{20} && \bf \frac{7}{15} && \bf \frac16 && \bf 0 && -\frac16 && 0 && 0 && 0 \\
&&0 && 0 && \frac12 && \frac{41}{60} && \frac1{12} && \bf -\frac1{12} && -\frac25  && 0 && 0 \\[-6pt]
&&&&& 0 && 0 && 0 && -\frac1{60} && \bf -\frac15 && -\frac7{15} && 0 && \!\iddots \\
&&&&&&&&  0 && 0 && -\frac1{24} && \bf -\frac9{40} && -\frac{17}{60} && 0  \\[-6pt]
&&&&&&&&&&& 0 && 0 && \bf -\frac2{15} && 0 &&  \!\iddots \\
&&&&&&&&&&&&&& 0 && \bf 0 && 0 \\
&&&&&&&&&&&&&&&&& \bf 0
\end{array}  \\[-50pt] 
\begin{array}{c@\hh c@{}c@{\,}c@{\,}c@{\;}c@{\,}c@{\,}c@{\;}c@{\!}c@{\!}c@{\,}c@\hg c@\hg c@\hg c@\hg c}
&&&&&&&&&&& \bf 0 \\
&&&&&&&& 0 && \bf 0 && 0 \\
&&&&& 0 && 0 && \bf \frac23 && 0 && 0 \\
&& 0 && 0 && \frac14 && \bf \frac14 && \frac{37}{24} && 0 && 0  \\
\bf 0 & \bf 0 && \bf 0 && \bf 0 && \bf 0 && 0 && 0 && 0 && 0 \\
&& 0 && 0 && \!\!-\frac14 && \bf \!-\frac14 && \!-\frac{37}{24} && 0 && 0 \\
&&&&& 0 &&  0 && \bf \!-\frac23  && 0 && 0 \\
&&&&&&&& 0 && \bf 0 && 0 \\
&&&&&&&&&&& \bf 0 \\
\end{array}  \;  \\[-50pt]
& \hspace{-54pt}
\begin{array}{c@\hg c@{}c@{}c@{}c@{}c@{\!}c@{\!}c@{}c@{\;}c@\hg c@\hg c@{\,}c@{}c@{}c@{}c@{}c@{\;}c@\hg c@\hg c}
&&&&&&&&&&&&&&&&& \bf 0 \\
&&&&&&&&&&&&&& 0 && \bf 0 && 0 \\[-6pt]
&&&&&&&&&&& 0 && 0 && \bf \!-\frac1{30} && 0 && \ddots \\
&&&&&&&&  0 && 0 && -\frac1{15} && \bf \!-\frac1{20} && -\frac{1}{30} && 0  \\[-6pt] 
&&&&& 0 && 0 && 0 && 0 && \bf \!\!-\frac1{30} && -\frac3{40} && 0 && \ddots \\
&&0 && 0 && 0 && \!-\frac{1}{20} && \frac1{30} && \bf 0 && -\frac1{15}  && 0 && 0 \\
\bf0&\bf 0 &&\bf -\frac1{30} && \bf-\frac{1}{20} && \bf \!-\frac{1}{30} && \bf 0 && \bf 0 && 0 && 0 && 0 && 0 \\
&&0 && 0 && \!-\frac1{10} && \!-\frac{1}{15} && 0 && \bf 0 && 0  && 0 && 0 \\[-6pt]
&&&&& 0 && 0 && 0 && 0 && \bf 0 && 0 && 0 && \!\iddots \\
&&&&&&&&  0 && 0 && 0 && \bf 0 && 0 && 0  \\[-6pt]
&&&&&&&&&&& 0 && 0 && \bf 0 && 0 &&  \!\iddots \\
&&&&&&&&&&&&&& 0 && \bf 0 && 0 \\
&&&&&&&&&&&&&&&&& \bf 0
\end{array}  \\[-53pt]  
\begin{array}{c@\hh c@{}c@{\,}c@{\,}c@{\;}c@{\,}c@{\,}c@{\;}c@{\!}c@{\!}c@{\,}c@\hg c@\hg c@\hg c@\hg c}
&&&&&&&&&&& \bf 0 \\
&&&&&&&& \,0\! && \bf 0 && 0 \\
&&&&& 0 && 0 && \bf \!-\frac1{12} && 0 && 0 \\
&& 0 && 0 && 0 && \bf 0 && \!-\frac{5}{24} && 0 && 0  \\
\bf 0\! & \bf 0 && \bf 0 && \bf 0 && \bf 0 && \frac1{16} && 0 && 0 && 0 \\
&& 0 && 0 && 0 && \bf 0 && \!-\frac{5}{24} && 0 && 0 \\
&&&&& 0 &&  0 && \bf \!-\frac1{12}  && 0 && 0 \\
&&&&&&&& \,0\! && \bf 0 && 0 \\
&&&&&&&&&&& \bf 0 \\
\end{array} \quad
\end{align*}
\caption{Splines around the vertices $B,D$} 
\label{fig:splinb}
\end{figure}

\subsection{The spline basis}
\label{sec:exbasis}

Now we can count the dimension of $S^1_k(\cH)$ and describe bases of the spline spaces.
We have 4 degrees of freedom around each crossing vertex as in Figure \ref{fig:freeda},
and 6 degrees of freedom around $B$ and $D$ as in Figure \ref{fig:freedb}.
The total of $4\cdot 4+2\times 6=28$ degrees of freedom is realized by splines of degree $\le 6$.
Up to the symmetries of the polygonal surface, these splines are depicted 
in Figures \ref{fig:splina},  \ref{fig:splinf},  \ref{fig:splinb}.
These are the splines characterized in \refpart{E1}. % and presented in \S \ref{sec:exvertexsp}.

The dimension of $M_k^1$ spaces in \S \ref{sec:ef}, \S \ref{sec:ab}, \S \ref{sec:be} equals
$2k,2k+1,2k-1$, respectively. The kernels of $W_0\oplus W_1$-maps of Lemma \ref{lm:separate} 
are subspaces of splines thoroughly vanishing % with the bilinear derivatives 
at the vertices. They have the dimension $2k-8,2k-8,2k-10$, respectively. 
These splines lift to $S^1_k(\cH)$  by assigning zero specializations 
in all other $M^1$-spaces and on the non-incident polygons,
giving us $22(k-4)-4$ independent splines in $S^1_k(\cH)$ for $k\ge 6$.
These are the splines characterized in \refpart{E2}.
% as in (\ref{eq:mz})

In particular, for each edge and any $\ell\ge 2$ there is a spline of degree $\ell+3$ that evaluates 
to $(h_0,h_1,h_2)=u^\ell(1-u)^2\cdot (0,1,-1)$ in the $M^1$-space of that edge,  in terms of (\ref{eq:splines2}).
For $k\ge 5$, this gives $11(k-4)$ independent splines supported on just one edge. % in $S^1_k(\cH)$ .
The splines corresponding to similar multiples of the syzygies $(2u,1,0)$, $(2u+u^2,1,0)$ 
can be linearly combined with the offset splines $U_1$, $\widetilde{U}_1$ or $V_1$ 
(and constant splines) to produce similarly vanishing splines supported only on one edge.
Alternatively, one can linearly combine ``adjacent" ($\ell\to\ell+1$) syzygy multiples to produce a spline 
thoroughly vanishing at both vertices. % with the bilinear derivatives. 
On the edges of \S \ref{sec:ef} and \S \ref{sec:ab}, 
polynomial multiples of the syzygy $(2u,1,0)$ give the splines
\begin{equation} \label{eq:esplina}
(h_0,h_1,h_2)=u^\ell(1-u)^{m-1}\cdot(1-u,2\ell-2(\ell+m)u,0)
\end{equation}
with $\ell\ge 2$, $m\ge 3$. 
Similarly, on \fc{EB} and \fc{FD} we have the splines
\begin{equation}
(h_0,h_1,h_2)=u^\ell(1-u)^{m-1}\cdot(1-u,(2+u)(\ell-\ell u-mu),0).
\end{equation}
For the edges of  \S \ref{sec:ef},
we can adjust (\ref{eq:esplina}) to 
\begin{equation}
(h_0,h_1,h_2)=u^\ell(1-u)^{m-1}\cdot(1-u,2\ell-2\ell u-mu,-mu)
\end{equation}
and allow $m\ge 2$.

Finally, we have $(k-3)^2+6\cdot (k-4)(k-5)/2$ independent splines that 
have only one non-zero control point, namely an interior control point 
of a restriction to the rectangle or to some triangle.
These splines are characterized in \refpart{E4}. 
There are no splines characterized in \refpart{E3} as $\cH$ has no boundary.

In total, we have 
\begin{align} \label{eq:gnedim}
\dim S^1_k(\cH)= & \, 28+22(k-4)-4+(k-3)^2+3(k-4)(k-5) \nonumber \\
% -4+11\cdot2\,(d-4)+6\,{d-4\choose 2}+(d-3)^2 %=(d-2)(4d-3)-1
= & \, (2k-3)^2+k-4
\end{align}
for $k\ge 6$. The dimension formulas in \S \ref{sec:dformula} give the same answer.
This formula % (\ref{eq:gnedim}) 
holds for $k=4$ and $k=5$ as well, giving the dimensions $25$ and $50$, respectively. 
This can be foreseen in the context of the proof of Lemma \ref{lm:lbound}.
The additional  $\sum m_\cE$ relations (4 or 2, respectively)  % in the proof  
between the $J^{1,1}$-jets at the vertices $B$, $E$, $F$, $D$ are surely unrelated
because the edges \fc{EB} and \fc{FD} are not connected to each other.

% ?? The space of cubic splines (i.e., $k=3$) is not interesting for applications,
% as the spaces $M^1(EB)$, $M^1(FD)$ do not separate directions?
% dim M^1_3 = 5 there, with a separating spline linear on the edge.

\begin{figure}
\begin{align*} \\[-9pt] 
\begin{array}{c@{\;}c@{\;}c@{\;}c@{}c@{\!}c@{}c@{\;}c@{\;}c@{\;}c@{\;}c}
&  1 && 1 && -\frac2{13} && 0 && 0 \\
\bf 1 && \bf 1 && \bf \frac4{13} && \bf 0 && \bf 0 && \bf 0 \\
& 1 && 1 && -\frac2{13} && 0 && 0 
\end{array} \hspace{-4pt} & \hspace{37pt}
\begin{array}{c@{\;}c@{\;}c@{\;}c@{}c@{\!\!}c@{}c@{\;}c@{\;}c@{\;}c@{\;}c}
& 0 && \frac14 && -\frac7{195} && 0 && 0 \\
\bf 0 && \bf \frac15 && \bf \frac{14}{195} && \bf 0 && \bf 0 && \bf 0 \\
& 0 && \frac14 && -\frac7{195} && 0 && 0
\end{array} % \hspace{-40pt} 
& \hspace{9pt}
\begin{array}{c@{\;}c@{\;}c@{\;}c@{\,}c@{}c@{\,}c@{\;}c@{\;}c@{\;}c@{\;}c}
& 0 && 0 && \frac{15}{13} && 1 && 1 \\
\bf 0 && \bf 0 && \bf \frac9{13} && \bf 1  && \bf 1 && \bf 1 \\
& 0 && 0 && \frac{15}{13} && 1 && 1 
\end{array} \, \\[30pt]
\begin{picture}(0,20)(-10,-3)
\put(-10,90){$\widetilde{V}_1:$}  \put(130,90){$\widetilde{V}_2:$} \put(280,90){$\widetilde{V}_5:$} 
\put(-10,23){$\widetilde{V}_6:$}  \put(130,23){$\widetilde{V}_8:$}  \put(280,23){$\widetilde{V}_9:$} 
\end{picture}  
% & \hspace{-36pt}
\begin{array}{c@{\;}c@{\;}c@{\;}c@{}c@{\;}c@{\;}c@{\;}c@{\;}c@{\;}c@{\;}c@{\!}c@{\!}c}
& 0 && 0 && 1 && 0 && \!\!-\frac15 \\
\bf 0 && \bf 0 && \bf \frac{27}{40} && \bf \frac{5}{8} && \bf \frac15 && \bf 0 \\
& 0 && 0 && \frac{5}{4} &&\frac15 && 0 
\end{array}  & \hspace{34pt} % \hspace{-46pt}
\begin{array}{c@{\;}c@{\;}c@{\;}c@{\!}c@{\!}c@{}c@{}c@{\,}c@{\;}c@{\;}c}
& 0 && 0 && \!\!-\frac{2}{13} && 0 && 0  \\
\bf 0 && \bf 0 && \bf -\frac{9}{130} \! && \bf \!\!-\frac{1}{20} && \bf 0 && \bf 0 \\
& 0 && 0 && \!\!-\frac{1}{13} && \frac{1}{20} && 0 
\end{array} & \hspace{18pt}% \hspace{56pt}
\begin{array}{c@{\;}c@{\;}c@{\;}c@{\!}c@{\!}c@{}c@{}c@{\,}c@{\;}c@{\;}c}
& 0 && 0 && \!\!-\frac{1}{13} && \frac{1}{20} && 0  \\
\bf 0 && \bf 0 && \bf -\frac{9}{130} \! && \bf \!\!-\frac{1}{20} && \bf 0 && \bf 0 \\
& 0 && 0 && \!\!-\frac{2}{13} && 0 && 0 
\end{array} \hspace{-8pt}
\end{align*}
\caption{Alternative splines around \fc{EB} and \fc{FD}} 
\label{fig:splinfda}
\end{figure}

\subsection{Alternative splines around $EB$ and $FD$}
\label{sec:abe}

In \S \ref{sec:gdata} we chose the quadratic gluing data (\ref{eq:crossing3})--(\ref{eq:crossing3a})
to interpolate between the relations (\ref{eq:horder4}) and (\ref{eq:horder3}) of gluing around $E,F$
and $B,D$ in such a way that Theorem  \ref{cond:comp2} holds at the vertices $E$, $F$.
An alternative is to replace (\ref{eq:crossing3})--(\ref{eq:crossing3a}) by the fractional-linear gluing data 
\begin{equation} \label{eq:crossing3b}
\partial_{EA}+\partial_{EC}=\frac{6u_E^B}{3-u_E^B}\,\partial_{EB}, \qquad
\partial_{FA}+\partial_{FC}=\frac{6u_F^D}{3-u_F^D}\,\partial_{FD}.
\end{equation}
This leads to the syzygy module $\cZ(3-u,6u,3-u)$,
with the generators $(1,0,-1)$, \mbox{$(6u,u-3,0)$}.
With this modification, the dimension of $M_k^1(EB)$, $M_k^1(FD)$ becomes $2k$ for $k\le 1$.
This gives complete separation of vertices in degree 5 already.
The splines % $V_1$, $V_2$, $V_5$, $V_6$, $V_8$, $V_9$ 
in Figure \ref{fig:splinfd} % \ref{sec:be}
should be replaced by the splines in Figure \ref{fig:splinfda}.
% and a mirror image of $\widetilde{V}_8$.
The splines $V_3=U_3$, $V_4=U_4$, $V_7=U_7$ of degree 4 can be kept as 
$\widetilde{V}_3$, $\widetilde{V}_4$, $\widetilde{V}_7$.
% or alternatively, degree 4 multiples of the spline $(h_0,h_1,h_2)=(0,1,-1)$. 
% For $\widetilde{V}_9$, one can take a mirror image of $\widetilde{V}_8$.
The spline (\ref{eq:trivzero}) is in $M_5^1(EB)$ and $M_5^1(FD)$ as well.
The degrees of freedom around $E,F$ and in Figure \ref{fig:freedb} are not changed.
Modification of the degree 6 splines in Figures \ref{fig:splinf}, \ref{fig:splinb}
is left as an exercise. Dimension formula (\ref{eq:gnedim}) is adjusted to $(2k-3)^2+k-2$.
%The dimension is larger by 2.

\section{Conclusions and perspectives}
\label{sec:practical}

In \S %Section 
\ref{sec:gsplines} we gave a general definition of $G^1$ {\em polygonal surfaces} 
and $G^1$ functions on them. The $G^1$ gluing data is allowed to be specified 
in terms of transitions maps, jet bundles, tangent bundles or transversal vector fields.
The new conditions in Theorem \ref{cond:comp2} generalizing the balancing condition in \cite{Peters2010} 
are incorporated in the definition of $G^1$ polygonal surfaces.

We analyze particularly {\em rational} $G^1$ polygonal surfaces and $G^1$ polynomial splines on them.
The relation of these splines to syzygies and interdependence of the local $M^1$-splines defined on edges
are explored in \S \ref{sec:freedom}. A structured set of generators for the space $S^1(\cM)$ of splines 
is presented in \S \ref{sec:generate}. Section \ref{sec:boxdelta} defines the generators more explicitly
in the context of $G^1$ surfaces made of rectangles and triangles, and gives the dimension formula
for the spaces of splines of bounded degree on these surfaces.
The example of \S \ref{sec:poctah} derives spline generators explicitly,
and illustrates the technical details handsomely. % richly, splendidly.
The splines are constructed not by solving equations of $G^1$ continuity constraints,
but my matching the degrees of freedom at the vertices and local $M^1$-splines along the edges.

The practical value of a constructed $G^1$ % polygonal surfaces and 
spline space $\cS$ is measured by smoothness and fairness \cite{Peters2003}
of realizations in $\RR^3$  by the functions in $\cS$. Lemmas \ref{prop:Hg} and \ref{prop:Hgb} imply
that for any vertex of a rational $G^1$ polygonal surface $\cM$ there is a realization of $\cM$
by % polynomial 
splines with a non-zero Jacobian (thus locally smooth) at that vertex.
Sections \ref{sec:serverts} and \ref{sec:generate} imply that the vertices
can be {\em completely separated} by the spline spaces. % of polynomial splines.  
Part \refpart{iii} of Lemma \ref {lm:syzygy} implies that any edge and its neighborhood 
on both polygons have a locally smooth realization in $\RR^2$ by splines. 
Globally, the realizations can have self-intersections 
(unavoidable for some non-orientable surfaces \cite{cltopology})
that are difficult to control analytically. 
Locally smooth realization of the whole surface $\cM$ could be derived by approximation
properties of spline spaces, in the context of approximating a corresponding 
differential surface realized in $\RR^3$. 
% (possibly without a self-intersection). 
The conclusion is that spaces of polynomial splines on rational $G^1$ surfaces
are adequate for smooth realization in $\RR^3$. % (in \S {sec:realize}).
If a corresponding differential surface can be topologically realized in $\RR^3$ 
without a self-intersection, the globally smooth realization should be possible with splines. 

Curvature fairness is an issue for low degree % polynomial 
splines \cite{Peters2003}, \cite{Greiner94}.
Realized surfaces tend to be relatively flat along the edges and especially around the vertices.
In the context of rational $G^1$ surfaces,
the restrictions to edges are often of lower degree than the splines themselves.
For example, this is the case when triangles are glued and $\deg \ma>\deg \mb$.
Gluing data with constant $\mb(u_1)$ is easy to define and interpolate,
but seemingly it generates splines of low quality. 
In particular, syzygy generators like $(1,0,-1)$ should be avoided
because they do not influence edge parametrization and postpone richer spline spaces
to larger degree given fixed $\delta(\cE)$ in (\ref{eq:delt}). 
The syzygy generators of the same degree could be more desirable.
Common roots of $a(u_1)$, $c(u_1)$ in (\ref{eq:edgeabc}) close to the interval $[0,1]$
likely mean more unwanted variation of the splines.
% This makes the convenient choices of constant $\mb(u_1)$ subpar, despite the convenience 

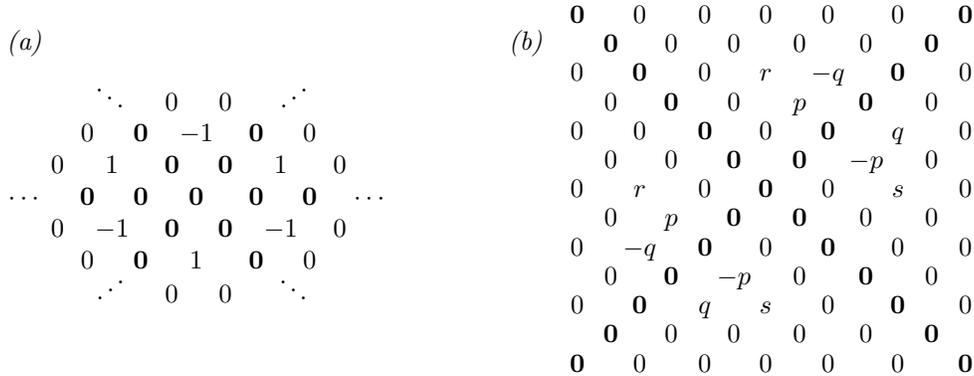
\begin{figure}
\[
\begin{array}{c@{\;}c@\hg c@{}c@{\,}c@\hg c@{}c@{\,}c@\hg c@{}c@{\,}c@\hg c@{\;}c}
&&& \bf \!\ddots && 0 && 0 && \bf \;\,\iddots\!\!\! \\
&& 0 && \bf 0 && -1 && \bf 0 && 0 \\ 
& 0 && 1 && \bf 0 && \bf 0 && 1 && 0 \\ 
\bf \cdots && \bf 0 && \bf 0 && \bf 0 && \bf 0 && \bf 0 && \bf \cdots \\
& 0 && -1 && \bf 0 && \bf 0 && -1 && 0 \\ 
&& 0 && \bf 0 && 1 && \bf 0 && 0 \\[-6pt] 
&&& \bf \!\iddots && 0 && 0 && \bf \;\,\ddots\!\!\! \\
\end{array} 
\begin{picture}(60,20)(-10,-3)
\put(-158,53){\refpart{a}} \put(32,53){\refpart{b}}  
\end{picture}  
\begin{array}{c@\hh c@{\,}c@{\;}c@\hh c@{\,}c@{\;}c@\hh c@{\,}c@{\,}c@{\;}c@\hh c@\hh c}
\bf  0 && 0 && 0 && 0 && 0 && 0 && \bf 0 \\[-1pt]
& \bf 0 && 0 && 0 && 0 && 0 && \bf 0 \\[-1pt]
0 && \bf  0 &&  0 && r && -q && \bf 0&& 0  \\[-1pt]
& 0 && \bf 0 &&0 && p && \bf 0 && 0  \\[-1pt]
0 && 0 && \bf 0 && 0 && \bf 0 && q && 0  \\[-1pt]
& 0 && 0 && \bf 0 && \bf 0 && -p && 0  \\[-1pt]
0 && r && 0 && \bf 0 && 0 && s && 0 \\[-1pt]
& 0 && p && \bf 0 && \bf 0 && 0 && 0  \\[-1pt]
0 && -q && \bf 0 && 0 && \bf 0 && 0 && 0  \\[-1pt]
& 0 && \bf 0 && -p && 0 && \bf 0 && 0 \\[-1pt]
0 && \bf 0 && q && s && 0 && \bf 0 && 0  \\[-1pt]
& \bf 0 && 0 && 0 && 0 && 0 && \bf 0 \\[-1pt]
\bf 0 && 0 && 0 && 0 && 0 && 0 && \bf 0
\end{array}
\]
\caption{A waving spline, and an adjusting spline} 
\label{fig:waving}
\end{figure}

A particular defect of $G^1$ surfaces is a ``waving" behavior around vertices of high valency
as in \cite[Figure 5]{Peters2010}. These saddle neighborhoods of high order are easily 
generated by multiple degrees of freedom in part \refpart{iii} of \refpart{D1}.
For example, at a vertex of even valency with all surrounding $\mb_k(u)=-1$
we have a spline whose only non-zero control points are the mixed derivatives alternating
between $1$ and $-1$. Figure \ref{fig:waving}\refpart{a} depicts such a spline around a vertex of valency 6.
% \begin{align} 
% \begin{array}{c@{\;}c@\hg c@{}c@{\,}c@\hg c@{}c@{\,}c@\hg c@{}c@{\,}c@\hg c@{\;}c}
% &&& \bf \!\ddots && 0 && 0 && \bf \;\,\iddots\!\!\! \\
% && 0 && \bf 0 && -1 && \bf 0 && 0 \\ 
% & 0 && 1 && \bf 0 && \bf 0 && 1 && 0 \\ 
% \bf \cdots && \bf 0 && \bf 0 && \bf 0 && \bf 0 && \bf 0 && \bf \cdots \\
% & 0 && -1 && \bf 0 && \bf 0 && -1 && 0 \\ 
% && 0 && \bf 0 && 1 && \bf 0 && 0 \\[-6pt] 
% &&& \bf \!\iddots && 0 && 0 && \bf \;\,\ddots\!\!\! \\
% \end{array} 
% \end{align}
To allow only simple saddle or convex neighborhoods of $C^2$ continuity at the vertices,
one may introduce a special class of polygonal surfaces with mixed geometric continuity,
by requiring $G^1$ continuity along the edges and additionally imposing $G^2$ continuity around the vertices. 
The conditions of \S \ref{sec:g1vertex} and \S \ref{sec:crossing} would then be strengthened 
to compatibility conditions of the $G^2$ gluing data, and Definition \ref{def:g1splines}
would be appended by $G^2$ continuity restrictions on the splines at the end-points of vertices.
The spaces $M(P)$ in (\ref{eq:wv}) will have to be extended to 6-dimensional $J^2$-jets,
while the spaces $M(\tau)$  in (\ref{eq:we}) would be appended by two utmost $O(v^2)$ terms.
The new $G^2$ restrictions on splines will be linear constraints on the image $J^2$-jets 
of adjusted $W_0$-maps in (\ref{eq:w0}). The dimension of $M^1$-spaces will not change generically,
as new restrictions on the crossing derivatives will be balanced by new $O(v^2)$ degrees of freedom.
Following Definition \ref{def:sepspline}, an enhanced kind of vertex separability would have to be introduced.
The spline space partitioning \refpart{D1}--\refpart{D5} would have to be adjusted then straightforwardly.

Expectedly, fairness issues can be addressed by higher degree splines. But it is harder to control the shape
with substantially more degrees of freedom. Shape optimization by energy minimization \cite{Greiner94}
is usually applied to whole surfaces in $\RR^3$. But this optimization could be applied to
filter spline spaces to manageable dimensions. For example, the splines in \refpart{D1} could be
optimally adjusted by contiguous splines in \refpart{D3}, \refpart{D5} while preserving the vanishing
properties at non-adjacent edges. 
For example, optimizing the spline in Figure \ref{fig:splinf}\refpart{c}
by minimizing the sum of four $L_2$-norms of the standard gradient 
$(\partial/\partial u,\partial/\partial v)$ on the four triangles leads to the linear adjustment of that spline
by Figure \ref{fig:waving}\refpart{b} with
\begin{equation}
(p,q,r,s)=\frac{1}{4630} \textstyle \left( 324+\frac14,  514+\frac14, 4865+\frac18, 3044+\frac{11}{24} \right).
\end{equation}
Subsequently, short sequences of nearest splines in \refpart{D3} 
could be optimally combined and adjusted by splines in \refpart{D5}, etc.
A subspace generated by the optimized splines should provide high-quality realizations
and manageable control. If shape optimization in $\RR^3$ is still needed, 
it should be then more numerically stable.

It is not necessary to build whole $G^1$ polygonal surfaces at once.
A practical approach is to consider {\em macro-patches} of polygons abstractly glued
along a portion of edges with $G^1$ continuity. The macro-patches fit Definition \ref{def:g1complex}
as $G^1$ polygonal surfaces with boundary, but the splines on them 
are supposed to thoroughly vanish % with the first order derivatives 
along the boundary. The whole $G^1$ surface $\cM$ is built from overlaying macro-patches,
so that each interior vertex of $\cM$ is an interior vertex of some macro-patch.
A spline space on $\cM$ is defined by aggregating the spline spaces on the macro-patches.
For high enough degree, the aggregated spline space should concur with Definition \ref{def:g1splines}.
This approach allows to generalize the partitioning \refpart{D1}--\refpart{D5}.
For example, splines of sharp lower degree may be supported on a macro-patch rather
than on a set of polygons with a single interior vertex or an interior edge.
Or positive splines may need a larger support. 
Or performing an optimizing filtering could be more effective across macro-patches.
\begin{example} \rm
Here we consider the construction in \cite{flex} as a rational $G^1$ polygonal surface $\cQ$.
As in \cite{flex}, consider a topological mesh $\widehat{\cQ}$ of quadrangles.
% is refined to a rational $G^1$ polygonal surface as follows.
Each quadrangle is represented by the rectangle $[0,1]^2$ which is subdivided by the middle
horizontal and vertical lines into 4 smaller rectangles. The quadrangles
are supposed to be parametrized by $C^1$ splines on the subdivided rectangle $[0,1]^2$, 
and the quadrangle parametrizations should meet with a specific $G^1$ continuity. 
Correspondingly, the polygonal surface $\cQ$ has 3 kinds of vertices:
\begin{enumerate}
\item The vertices of the original mesh $\widehat{\cQ}$, each of some valency $\ge 3$.
The interior vertices of valency 4 are {\em regular}.
The other interior vertices are {\em irregular}.
% The symmetric vertex surround data as in (\ref{eq:hsym}) is assumed.
\item Crossing vertices represented by the midpoints of the edges of $\widehat{\cQ}$.
\item Crossing vertices represented the center of the subdivided rectangle $[0,1]^2$
for each quadrangle of $\widehat{\cQ}$. 
\end{enumerate}
At all vertices, the symmetric vertex gluing % surround 
data as in (\ref{eq:hsym}) is assumed.
There are edges connecting vertices of the kinds \refpart{ii} and \refpart{iii},
representing the subdivision of $[0,1]^2$. The standard $C^1$ gluing data is assumed on them
(see Remark \ref{rm:parametric}). The other edges are half-edges of the original mesh $\widehat{\cQ}$.
They connect vertices of the kinds \refpart{ii} and \refpart{iii}, 
and the quadratic gluing data is assumed on them:
\begin{equation}
\ma(u)=q\,u^2, \qquad  \mb(u)=-1, 
\qquad \mbox{with } q=2\cos\frac{2\pi}n.
\end{equation}
Here we use a standard coordinate $u$ with $u=0$ being a vertex of type \refpart{ii},
and $u=1$ being a vertex of type \refpart{i} of valency $n$. 
% On the edges with $n=4$ we have parametric $C^1$ continuity as well. 
This gluing data satisfies Theorem \ref{cond:comp2}. 
The main assertion in  \cite{flex} is that quartic splines on this surface provide
sufficiently many degrees of freedom for high quality surfaces.

The quadrangles are considered as macro-patches in \cite{flex}.
Sets of rectangles centered at the vertices of types \refpart{i}, \refpart{ii}
can be considered as macro-patches as well. 
Following our partitioning \refpart{E1}--\refpart{E4} of splines, we have the following quartic splines:
\begin{itemize}
\item For each quadrangle, we have a center vertex of  type \refpart{iii}
and corresponding $4$ splines around its center vertex as in \refpart{E1}, % (of type \refpart{iii}) 
$8$ splines along the four $C^1$ edges as in \refpart{E2}, and $4$ splines on the smaller rectangles
as in \refpart{E4}. These splines thoroughly vanish on the boundary of %the {\em macro-patch} 
$[0,1]^2$. % together with the first order derivatives.
\item Around each regular vertex of $\widehat{\cQ}$, we have the standard $C^1$ continuity
and similar $4+8$ vertex and edge splines.
\item A half-edge of $\widehat{\cQ}$ connecting an irregular vertex $\cP$, % of order $\neq 4$,
the corresponding $M^1_4$-space has dimension $9$. 
However, the 9 degrees of freedom as in (\ref{eq:mmmm}) are not met
because there is a spline as in (\ref{eq:trivzero}). 
The vertices are therefore not completely separated.
There is a relation between corner control coefficients like (\ref{eq:quarel2}):
\begin{equation} \label{eq:qrel}
12q\,h_0(1)-12q\,h'_0(1)=12q\,h_0(0)+4q\,h'_0(0)-h'_1(1)-h'_2(1).
\end{equation}
If the half-edge leads from $\cP$ to a regular vertex of $\widehat{\cQ}$, % of order $4$, 
this additional restriction is localized to a macro-patch centered at $\cP$ and 
at the adjacent vertices of type \refpart{ii}. 
If $\cP$ is connected to other irregular vertices of $\widehat{\cQ}$,
some generating splines may need to be globally defined.
\end{itemize}
% In total, 
% If there is a cycle of edges on $\widehat{\cQ}$ that is 
\end{example}
A dimension formula can be derived using the results of \S \ref{sec:dformula}.
If the mesh $\widehat{\cQ}$ defines an orientable surface, %Theorem \ref{th:gdimform}
we use Theorem \ref{th:gdimform}  and apply formulas (\ref{eq:polygee}), (\ref{eq:topolov}) 
to the topology of $\widehat{\cQ}$ to get this dimension expression for $k\ge 5$:
\begin{align}
\dim S^1_k(\cQ)= & \; 16-16g\big(\widehat{\cQ}\big) + \left( k-1 \right)^2 \#\{\mbox{\rm quadrangles}\}
-5\,\#\{\mbox{\rm irregular vertices}\}  \nonumber  \\[1pt]
%- \! \sum_{{\rm\scriptsize interior}\;\cE} \! \delta(\cE)  \nonumber\\[1pt]
& +(2k-3) \, \#\{\mbox{\rm boundary edges of } \widehat{\cQ}\} -8\,\#\{\mbox{\rm boundary components}\} .
\end{align}
If $k=4$ and there are dependancies between the relations (\ref{eq:qrel}),
this formula is an underestimate. This is possible only if there are cycles of edges 
connecting only irregular vertices.

\small

\bibliographystyle{plain}
\bibliography{cg-references}

\end{document}